\documentclass{amsart}
\usepackage{fullpage}
\usepackage{amsfonts,amsmath,amssymb,parskip,tikz,amsthm,hyperref,enumerate,subcaption,multirow,booktabs,subcaption}
\usepackage[capitalise,nameinlink]{cleveref}
\usetikzlibrary{shapes.geometric} 
\usetikzlibrary{calc,positioning}
\usepackage[dvipsnames]{xcolor}

\newcommand{\quaad}{\quad\quad}
\newcommand{\Sr}{S_{\vec{r}}}
\newcommand{\Z}{\mathbb{Z}}
\newcommand{\C}{\mathrm{C}}
\newcommand{\UC}{\mathrm{UC}}
\newcommand{\D}{\mathrm{D}}
\newcommand{\UD}{\mathrm{UD}}
\newcommand{\St}{\mathrm{S}}
\newcommand{\from}{\!:}

\newcommand{\vertex}[3][1]{
\node [circle,  fill, inner sep = 0pt,outer sep = 0pt, minimum size=#1mm] (#3) at #2 {};
}

\newcommand{\fg}[1]{
    \pgfdeclarelayer{foreground}
    \pgfsetlayers{main,foreground}
    \begin{pgfonlayer}{foreground}
    #1
    \end{pgfonlayer}
    }

\newcommand{\smallvertex}[2]{
\node [circle,  fill, inner sep = 0pt,outer sep = 0pt, minimum size=1mm] (#2) at #1 {};
}

\newcommand{\Ygraph}{
    \fg{\vertex{(0,0)}{B}
    \vertex{(0,-1)}{A}
    \vertex{(-.707,.707)}{C}
    \vertex{(.707,.707)}{D}
    \draw (A) -- (B) -- (C) -- (B) -- (D);
    }
}

\newcommand{\Xgraph}{
    \fg{\vertex{(0,-1.25)}{A}
    \vertex{(0,0)}{B}
    \vertex{(1.25,0)}{C}
    \vertex{(0,1.25)}{D}
    \vertex{(-1.25,0)}{E}
    \draw (A) -- (D);
    \draw (C) -- (E);
    }
}

\definecolor{cbred}{HTML}{DE3220}
\newcommand{\RedRobot}[2]{
\node [rectangle, cbred, fill, inner sep = 0pt,outer sep = 1pt, minimum size=2.3mm] (#2) at #1 {};
}

\definecolor{cbgreen}{HTML}{13C113}
\newcommand{\GreenRobot}[2]{
\node [diamond, cbgreen, fill, inner sep = 0pt,outer sep = 1pt,minimum size=3mm] (#2) at #1 {};
}
\definecolor{cbblue}{HTML}{005AB5}
\newcommand{\BlueRobot}[2]{
\node [isosceles triangle, shape border rotate=90,cbblue, fill, inner sep = 0pt,outer sep = 1pt,minimum size=2.5mm] (#2) at #1 {};
}

\definecolor{cbyellow}{HTML}{FFC100}
\newcommand{\YellowRobot}[2]{
\node [star, cbyellow, fill, inner sep = 0pt,outer sep = 1pt,minimum size=3.2mm] (#2) at #1 {};
}

\newcommand{\PurpleRobot}[2]{
\node [circle, Purple, fill, inner sep = 0pt,outer sep = 1pt,minimum size=2.7mm] (#2) at #1 {};
}

\newcommand{\GrayRobot}[2]{
\node [circle, Gray, fill, inner sep = 0pt,outer sep = 1pt, minimum size=2.5mm] (#2) at #1 {};
}

\newtheorem{thm}{Theorem}[section]
\newtheorem{prop}[thm]{Proposition}
\newtheorem{lemma}[thm]{Lemma}
\newtheorem{cor}[thm]{Corollary}
\newtheorem*{thm1}{\cref{thm:SrGConnected}}
\newtheorem*{thm2}{\cref{thm:numCellsA}}
\newtheorem*{thm3}{\cref{thm:numCellsB}}
\theoremstyle{definition}
\newtheorem{defn}[thm]{Definition}
\newtheorem{ex}[thm]{Example}
\theoremstyle{remark}
\newtheorem*{rmk}{Remark}

\title{Grouped Stirling complexes}
\author{Alessia Revelli \and Steven Scheirer}
\address{Department of Mathematics and Computer Science, Susquehanna University, Selinsgrove, PA 17870}
\email{revelli@susqu.edu}
\email{scheirer@susqu.edu}

\keywords{Configuration Spaces; Cell Complexes; Topological Robotics}
\subjclass{Primary 57-02; Secondary 05A15}

\begin{document}

\begin{abstract}
        Given a graph $G$, a configuration space of $G$ can be thought of as the set of all possible configurations of ``robots" which can move throughout $G$, subject to some constraints. We introduce a type of configuration space which we call \textit{Grouped Stirling complexes}, denoted by $\Sr(G)$, in which we place robots in groups subject to two constraints. First, there must be at least one robot on each vertex of $G$, and second, any two robots from the same group must be ``separated by at least one full open edge" of $G$. The space $\Sr(G)$ has a \textit{closed cell structure,} which means it can be built out of \textit{cells} of various dimensions. Our main results show $\Sr(G)$ is path-connected, provided there are at least three groups, and determine the number of cells of $\Sr(G)$ in certain cases.
\end{abstract}
\maketitle

\section{Introduction}\label{sec:Intro}

Given a set $X$ and a positive integer $r$, a \textit{configuration space of $r$ points in $X$} can be thought of as the set of all possible ways to place $r$ points in $X,$ subject to certain constraints. In this paper, the set $X$ will be a graph $G$ and we will think of the $r$ points as ``robots" which can move throughout the graph $G.$ One can view this as modeling the situation in which the graph represents a system of tracks in a factory, and the robots can move along those tracks to perform tasks throughout the factory.

There are different versions of configuration spaces depending on factors such as whether the robots are distinguishable, whether two robots can occupy the same location on $G,$ and whether there are any points on $G$ which must be occupied by at least one robot. In this section, we will provide an intuitive overview of some of these variations; the relevant precise definitions will be provided in \cref{sec:Basics}.

In the ``standard" configuration space, denoted by $\C^r(G),$ the robots are all distinguishable, and no two robots can occupy the same location on $G.$ To distinguish the robots, we can give each robot a unique label (say an integer between 1 and $r$), or a unique color. For this reason, $\C^r(G)$ is often referred to as a \textit{labeled} configuration space. For example, the left-hand side of \cref{fig:NormalConfigSpaces} shows a graph $Y$, and the middle shows a configuration in $\C^2(Y)$. Here and in the remainder of the figures, we represent the robots as various shapes on the graph. Since the robots in a configuration in $\C^r(G)$ are distinguishable, we use different colors and shapes for each robot. In the configuration in the middle of \cref{fig:NormalConfigSpaces} we have two robots: one red/square robot on the vertex $a$, and one green/diamond robot on the edge $(a,b)$.

\begin{figure}[h!]
    \centering
    \begin{tikzpicture}
        \Ygraph
        \node[left] at (A) {$a$};
        \node[below left] at (B) {$b$};
        \node[left] at (C) {$c$};
        \node[right] at (D) {$d$};
    \end{tikzpicture}
    \hspace{1cm}
    \begin{tikzpicture}
        \Ygraph
        \RedRobot{(A)}{}
        \GreenRobot{($(B)!0.5!(A)$)}{}
        \Ygraph
    \end{tikzpicture}
    \hspace{1cm}
    \begin{tikzpicture}
        \Ygraph
        \GrayRobot{($(B)!.7!(D)$)}{}
        \GrayRobot{($(B)!0.5!(C)$)}{}
        \Ygraph
    \end{tikzpicture}
    \caption{A graph $Y$ (left); configurations in $\C^2(Y)$ (middle) and $\UC^2(Y)$ (right)}
    \label{fig:NormalConfigSpaces}
\end{figure}
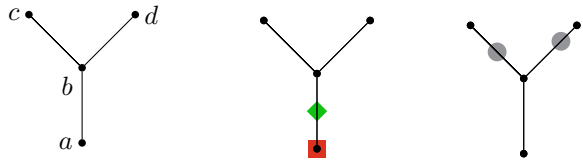

On the other hand, in the \textit{unlabeled} configuration space, denoted by $\UC^r(G)$, it is still required that no two robots occupy the same location on $G$, but the robots are now indistinguishable. That is, we view the robots as all having the same color. For example, the right-hand side of \cref{fig:NormalConfigSpaces} shows a configuration in $\UC^2(Y).$ Here, we use gray circles for both robots to indicate that they are indistinguishable.

There are also \textit{discrete} labeled and unlabeled configuration spaces, introduced in \cite{AbramsThesis} and denoted here by $\D^r(G)$ and $\UD^r(G),$ respectively. These are subspaces of $\C^r(G)$ and $\UC^r(G)$, respectively, in which any two robots must be ``separated by at least a full open edge."  By an ``open edge" or the ``interior of an edge," we mean all the points on that edge which fall strictly between the endpoints of that edge. More specifically, the spaces $\D^r(G)$ and $\UD^r(G)$ have the following constraint:

\begin{itemize}
    \item[($\ast$)] If one robot is on a vertex $v$ of $G$, then no other robot can fall on $v$ or the interior of any edge which has $v$ as an endpoint, and if one robot is on the interior of an edge $e,$ then no other robot can be on the interior of any edge which shares an endpoint with $e.$
\end{itemize} 

For example, the configuration in the middle of \cref{fig:NormalConfigSpaces} is \textit{not} a configuration in $\D^2(Y)$, since there is a robot on the vertex $a$, and another robot on the interior of the edge $(a,b)$. Likewise the configuration on the right of \cref{fig:NormalConfigSpaces} is \textit{not} a configuration in $\UD^2(Y)$ since there is a robot on the interior of the edge $(b,c)$ and another robot on the interior of the edge $(b,d),$ which shares an endpoint with $(b,c).$ However, the configurations in \cref{fig:DiscreteConfigSpaces} \textit{are}  configurations in $\D^2(Y)$ and $\UD^2(Y)$. 

\begin{figure}[h!]
    \centering
    \begin{tikzpicture}
        \fg{\Ygraph}
        \RedRobot{($(A)!.5!(B)$)}{}
        \GreenRobot{(C)}{}
    \end{tikzpicture}
    \hspace{1cm}
    \begin{tikzpicture}
        \fg{\Ygraph}
        \GrayRobot{(B)}{}
        \GrayRobot{(D)}{}
    \end{tikzpicture}
    \caption{Configurations in $\D^2(Y)$ (left) and $\UD^2(Y)$ (right)}
    \label{fig:DiscreteConfigSpaces}
\end{figure}
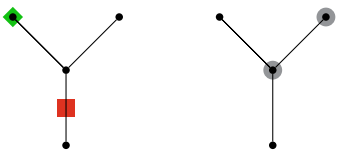

The benefit of working with the discrete configuration spaces $\D^r(G)$ and $\UD^r(G)$ over $\C^r(G)$ and $\UC^r(G)$ is that the discrete spaces have a \textit{closed cubical cell structure}. This means that these spaces can be built out of 0-cells (points), 1-cells (line segments), 2-cells (solid squares), 3-cells (solid cubes), and so on. 

For example, consider the unordered discrete space $\UD^2(Y),$ where $Y$ is again the graph in \cref{fig:NormalConfigSpaces}. One configuration in this space is the configuration in which one robot is on the vertex $a$ and the other robot is on the vertex $b.$ This is represented with the 0-cell (i.e. point) on the top-right of the hexagon in the middle of \cref{fig:UD2Y}. To ``build up" the entire space $\UD^2(Y),$ consider how we can move robot(s) from this configuration. Constraint $(\ast)$ prohibits us from moving the robot on vertex $b$ towards vertex $a$, but we are able to move this robot towards either vertex $c$ or $d.$ First consider moving this robot towards $c$. In doing so, we are moving one robot along the edge $(b,c),$ while the other robot remains fixed on vertex $a.$ This is represented with the 1-cell (line segment) at the top of the hexagon. Any point on this line segment corresponds to a configuration in which one robot is on the vertex $a$ and the other is somewhere on the edge $(b,c)$.

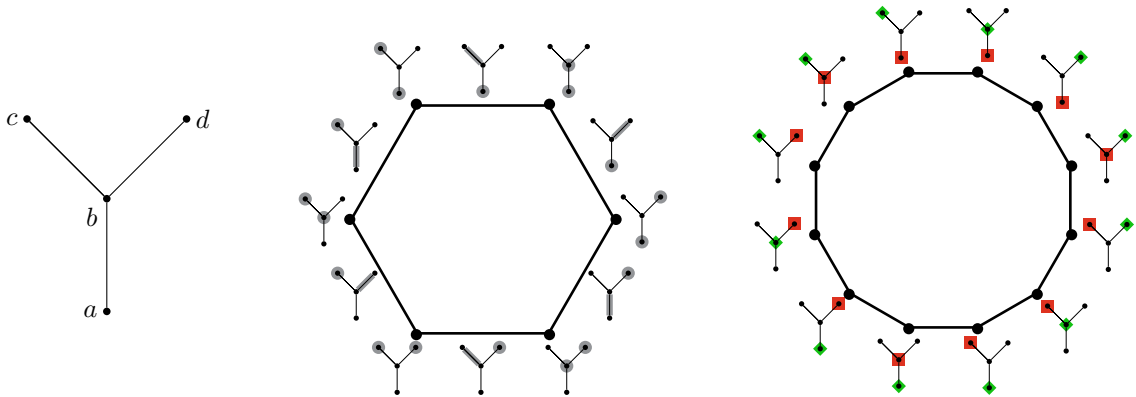
\begin{figure}[h!]
    \centering
    \newsavebox\ConfigA
    \savebox{\ConfigA}{
    \begin{tikzpicture}[scale=0.5]
        \Ygraph
        \GrayRobot{(A)}{}
        \GrayRobot{(B)}{}
    \end{tikzpicture}
    }
    \newsavebox\ConfigB
    \savebox{\ConfigB}{
    \begin{tikzpicture}[scale=0.5]
        \Ygraph;
        \GrayRobot{(A)}{}
        \GrayRobot{(C)}{}
    \end{tikzpicture}
    }
    \newsavebox\ConfigC
    \savebox{\ConfigC}{
    \begin{tikzpicture}[scale=0.5]
        \Ygraph
        \GrayRobot{(B)}{}
        \GrayRobot{(C)}{}
    \end{tikzpicture}
    }
    \newsavebox\ConfigD
    \savebox{\ConfigD}{
    \begin{tikzpicture}[scale=0.5]
        \Ygraph;
        \GrayRobot{(C)}{}
        \GrayRobot{(D)}{}
    \end{tikzpicture}
    }
    \newsavebox\ConfigE
    \savebox{\ConfigE}{
    \begin{tikzpicture}[scale=0.5]
        \Ygraph
        \GrayRobot{(B)}{}
        \GrayRobot{(D)}{}
    \end{tikzpicture}
    }
    \newsavebox\ConfigF
    \savebox{\ConfigF}{
    \begin{tikzpicture}[scale=0.5]
        \Ygraph
        \GrayRobot{(A)}{}
        \GrayRobot{(D)}{}
    \end{tikzpicture}
    }
    \newsavebox\ConfigG
    \savebox{\ConfigG}{
    \begin{tikzpicture}[scale=0.5]
        \Ygraph
        \GrayRobot{(A)}{}
        \draw[Gray, line width = 1.2mm] (B) -- (C);
    \end{tikzpicture}
    }
    \newsavebox\ConfigH
    \savebox{\ConfigH}{
    \begin{tikzpicture}[scale=0.5]
        \Ygraph
        \GrayRobot{(C)}{}
        \draw[Gray, line width = 1.2mm] (A) -- (B);
    \end{tikzpicture}
    }
    \newsavebox\ConfigI
    \savebox{\ConfigI}{
    \begin{tikzpicture}[scale=0.5]
        \Ygraph
        \GrayRobot{(C)}{}
        \draw[Gray, line width = 1.2mm] (B) -- (D);
    \end{tikzpicture}
    }
    \newsavebox\ConfigJ
    \savebox{\ConfigJ}{
    \begin{tikzpicture}[scale=0.5]
        \Ygraph
        \GrayRobot{(D)}{}
        \draw[Gray, line width = 1.2mm] (B) -- (C);
    \end{tikzpicture}
    }
    \newsavebox\ConfigK
    \savebox{\ConfigK}{
    \begin{tikzpicture}[scale=0.5]
        \Ygraph
        \GrayRobot{(D)}{}
        \draw[Gray, line width = 1.2mm] (A) -- (B);
    \end{tikzpicture}
    }
    \newsavebox\ConfigL
    \savebox{\ConfigL}{
    \begin{tikzpicture}[scale=0.5]
        \Ygraph
        \GrayRobot{(A)}{}
        \draw[Gray, line width = 1.2mm] (D) -- (B);
    \end{tikzpicture}
    }
    \raisebox{1cm}{\begin{tikzpicture}[scale=1.5]
        \Ygraph
        \node[left] at (A) {$a$};
        \node[below left] at (B) {$b$};
        \node[left] at (C) {$c$};
        \node[right] at (D) {$d$};
    \end{tikzpicture}}
    \quaad
    \scalebox{.7}{
    \begin{tikzpicture}[scale=.85]
        \node[regular polygon, regular polygon sides = 6, draw, minimum size = 5cm, line width = 0.5mm] (B) at (0,0) {};
        \node[regular polygon, regular polygon sides = 6, minimum size = 6.5cm] (A) at (0,0) {};
        \node[regular polygon, regular polygon sides = 6, minimum size = 6.5cm] (C) at (0,0) {};
        \node[] at (A.corner 1) {\usebox\ConfigA};
        \node[] at (A.corner 2) {\usebox\ConfigB};
        \node[xshift=2mm] at (A.corner 3) {\usebox\ConfigC};
        \node[] at (A.corner 4) {\usebox\ConfigD};
        \node[] at (A.corner 5) {\usebox\ConfigE};
        \node[xshift = -2mm] at (A.corner 6) {\usebox\ConfigF};
        \node[] at ($(C.corner 1) !.5! (C.corner 2)$) {\usebox\ConfigG};
        \node[] at ($(C.corner 2) !.5! (C.corner 3)$) {\usebox\ConfigH};
        \node[] at ($(C.corner 3) !.5! (C.corner 4)$) {\usebox\ConfigI};
        \node[] at ($(C.corner 4) !.5! (C.corner 5)$) {\usebox\ConfigJ};
        \node[] at ($(C.corner 5) !.5! (C.corner 6)$) {\usebox\ConfigK};
        \node[] at ($(C.corner 1) !.5! (C.corner 6)$) {\usebox\ConfigL};
        \foreach \i in{1,...,6}{
        \node[fill,circle, inner sep = .75mm] at (B.corner \i) {};
        }
    \end{tikzpicture}
    }
    \savebox{\ConfigA}{
    \begin{tikzpicture}[scale=0.5]
        \Ygraph
        \RedRobot{(A)}{}
        \GreenRobot{(B)}{}
    \end{tikzpicture}
    }
    \savebox{\ConfigB}{
    \begin{tikzpicture}[scale=0.5]
        \Ygraph
        \RedRobot{(A)}{}
        \GreenRobot{(C)}{}
    \end{tikzpicture}
    }
    \savebox{\ConfigC}{
    \begin{tikzpicture}[scale=0.5]
        \Ygraph
        \RedRobot{(B)}{}
        \GreenRobot{(C)}{}
    \end{tikzpicture}
    }
    \savebox{\ConfigD}{
    \begin{tikzpicture}[scale=0.5]
        \Ygraph
        \RedRobot{(D)}{}
        \GreenRobot{(C)}{}
    \end{tikzpicture}
    }
    \savebox{\ConfigE}{
    \begin{tikzpicture}[scale=0.5]
        \Ygraph
        \RedRobot{(D)}{}
        \GreenRobot{(B)}{}
    \end{tikzpicture}
    }
    \savebox{\ConfigF}{
    \begin{tikzpicture}[scale=0.5]
        \Ygraph
        \RedRobot{(D)}{}
        \GreenRobot{(A)}{}
    \end{tikzpicture}
    }
    \savebox{\ConfigG}{
    \begin{tikzpicture}[scale=0.5]
        \Ygraph
        \RedRobot{(B)}{}
        \GreenRobot{(A)}{}
    \end{tikzpicture}
    }
    \savebox{\ConfigH}{
    \begin{tikzpicture}[scale=0.5]
        \Ygraph
        \RedRobot{(C)}{}
        \GreenRobot{(A)}{}
    \end{tikzpicture}
    }
    \savebox{\ConfigI}{
    \begin{tikzpicture}[scale=0.5]
        \Ygraph
        \RedRobot{(C)}{}
        \GreenRobot{(B)}{}
    \end{tikzpicture}
    }
    \savebox{\ConfigJ}{
    \begin{tikzpicture}[scale=0.5]
        \Ygraph
        \RedRobot{(C)}{}
        \GreenRobot{(D)}{}
    \end{tikzpicture}
    }
    \savebox{\ConfigK}{
    \begin{tikzpicture}[scale=0.5]
        \Ygraph
        \RedRobot{(B)}{}
        \GreenRobot{(D)}{}
    \end{tikzpicture}
    }
    \savebox{\ConfigL}{
    \begin{tikzpicture}[scale=0.5]
        \Ygraph
        \RedRobot{(A)}{}
        \GreenRobot{(D)}{}
    \end{tikzpicture}
    }
    \hspace*{-1cm}
    \scalebox{.7}{
    \begin{tikzpicture}[scale=0.85]
        \node[regular polygon, regular polygon sides = 12, draw, minimum size = 5cm, line width = 0.5mm] (B) at (0,0) {};
        \node[regular polygon, regular polygon sides = 12, minimum size = 6.5cm] (A) at (0,0) {};
        \node[] at (A.corner 1) {\usebox\ConfigA};
        \node[] at (A.corner 2) {\usebox\ConfigB};
        \node[] at (A.corner 3) {\usebox\ConfigC};
        \node[] at (A.corner 4) {\usebox\ConfigD};
        \node[] at (A.corner 5) {\usebox\ConfigE};
        \node[yshift = -1mm] at (A.corner 6) {\usebox\ConfigF};
        \node[] at (A.corner 7) {\usebox\ConfigG};
        \node[] at (A.corner 8) {\usebox\ConfigH};
        \node[yshift = -1mm] at (A.corner 9) {\usebox\ConfigI};
        \node[] at (A.corner 10) {\usebox\ConfigJ};
        \node[] at (A.corner 11) {\usebox\ConfigK};
        \node[] at (A.corner 12) {\usebox\ConfigL};
        \foreach \i in{1,...,12}{
        \node[fill,circle, inner sep = .75mm] at (B.corner \i) {};
        }
    \end{tikzpicture}
    }
    \caption{The graph $Y$ (left); the configuration spaces $\UD^2(Y)$ (middle) and $\D^2(Y)$ (right)}
    \label{fig:UD2Y}
\end{figure}

The left endpoint of this line segment corresponds to the configuration in which one robot is on $a$ and the other is on $c.$ From here, we can move the robot on $a$ along the edge $(a,b),$ which gives another 1-cell of $\UD^2(Y)$ (the top-left side of the hexagon). Continuing in this fashion, we can see that the entire configuration space $\UD^2(Y)$ is a hexagon: each 0-cell in the hexagon represents a configuration in which both robots are on vertices of $Y$, and each 1-cell represents the configurations in which one robot is on a vertex and the other is somewhere on an edge.

Using a similar approach, we can show that the ordered discrete space $\D^2(Y)$ is the 12-gon shown in \cref{fig:UD2Y} (right). For the sake of avoiding clutter, only the 0-cells are labeled.

A few remarks: first, notice that for each cell of $\UD^2(Y)$ there are $2!=2$ cells in $\D^2(Y),$ corresponding to the 2! ways to permute the two colors. More generally, for each cell of $\UD^r(G)$ there are $r!$ cells in $\D^r(G).$

Next, notice that both spaces in \cref{fig:UD2Y} are \textit{connected}. That is, each space only has one ``piece," known more formally as a \textit{connected component}. This means that if we start with any configuration, we can move to any other configuration without ``breaking the rules" (i.e. in moving from the first configuration to the second, we never break constraint $(\ast)$). 

Finally, notice that for the graph $Y$, constraint $(\ast)$ prohibits two robots from occupying the interiors of two different edges, since any two edges in $Y$ have a common endpoint. This is reflected in the fact that the configuration spaces $\UD^2(Y)$ and $\D^2(Y)$ do not have any $i$-cells for $i\ge 2.$ However, if the edge $(c,d)$ is added to $Y$ to form a new graph $Y'$, then in $\UD^2(Y'),$ it is possible that one robot is on the edge $(a,b)$ and the other is on the edge $(c,d),$ so that there is a 2-cell of $\UD^2(Y')$ (in addition to two more 1-cells). The entire configuration space $\UD^2(Y')$ is shown in \cref{fig:UD2Yprime}; the 2-cell is shaded gray. 

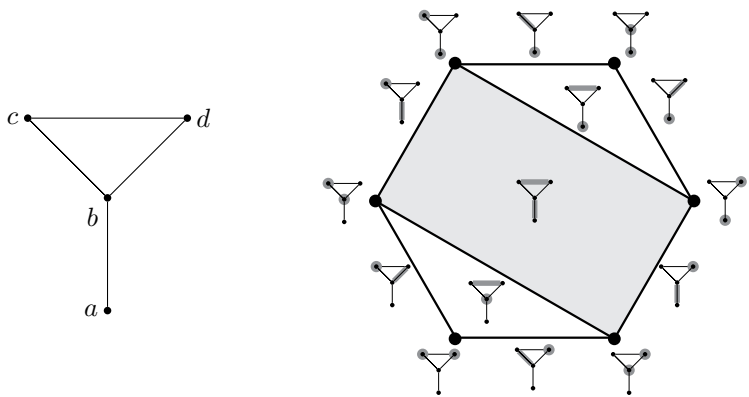
\begin{figure}[h!]
    \centering
    \savebox{\ConfigA}{
    \begin{tikzpicture}[scale=0.5]
        \Ygraph
        \draw (C) -- (D);
        \GrayRobot{(A)}{}
        \GrayRobot{(B)}{}
    \end{tikzpicture}
    }
    \savebox{\ConfigB}{
    \begin{tikzpicture}[scale=0.5]
        \Ygraph
        \draw (C) -- (D);
        \GrayRobot{(A)}{}
        \GrayRobot{(C)}{}
    \end{tikzpicture}
    }
    \savebox{\ConfigC}{
    \begin{tikzpicture}[scale=0.5]
        \Ygraph
        \draw (C) -- (D);
        \GrayRobot{(B)}{}
        \GrayRobot{(C)}{}
    \end{tikzpicture}
    }
    \savebox{\ConfigD}{
    \begin{tikzpicture}[scale=0.5]
        \Ygraph
        \draw (C) -- (D);
        \GrayRobot{(C)}{}
        \GrayRobot{(D)}{}
    \end{tikzpicture}
    }
    \savebox{\ConfigE}{
    \begin{tikzpicture}[scale=0.5]
        \Ygraph
        \draw (C) -- (D);
        \GrayRobot{(B)}{}
        \GrayRobot{(D)}{}
    \end{tikzpicture}
    }
    \savebox{\ConfigF}{
    \begin{tikzpicture}[scale=0.5]
        \Ygraph
        \draw (C) -- (D);
        \GrayRobot{(A)}{}
        \GrayRobot{(D)}{}
    \end{tikzpicture}
    }
    \savebox{\ConfigG}{
    \begin{tikzpicture}[scale=0.5]
        \Ygraph
        \draw (C) -- (D);
        \GrayRobot{(A)}{}
        \draw[Gray, line width = 1.2mm] (B) -- (C);
    \end{tikzpicture}
    }
    \savebox{\ConfigH}{
    \begin{tikzpicture}[scale=0.5]
        \Ygraph
        \draw (C) -- (D);
        \GrayRobot{(C)}{}
        \draw[Gray, line width = 1.2mm] (A) -- (B);
    \end{tikzpicture}
    }
    \savebox{\ConfigI}{
    \begin{tikzpicture}[scale=0.5]
        \Ygraph
        \draw (C) -- (D);
        \GrayRobot{(C)}{}
        \draw[Gray, line width = 1.2mm] (B) -- (D);
    \end{tikzpicture}
    }
    \savebox{\ConfigJ}{
    \begin{tikzpicture}[scale=0.5]
        \Ygraph
        \draw (C) -- (D);
        \GrayRobot{(D)}{}
        \draw[Gray, line width = 1.2mm] (B) -- (C);
    \end{tikzpicture}
    }
    \savebox{\ConfigK}{
    \begin{tikzpicture}[scale=0.5]
        \Ygraph
        \draw (C) -- (D);
        \GrayRobot{(D)}{}
        \draw[Gray, line width = 1.2mm] (A) -- (B);
    \end{tikzpicture}
    }
    \savebox{\ConfigL}{
    \begin{tikzpicture}[scale=0.5]
        \Ygraph
        \draw (C) -- (D);
        \GrayRobot{(A)}{}
        \draw[Gray, line width = 1.2mm] (D) -- (B);
    \end{tikzpicture}
    }
    \newsavebox\ConfigM
    \savebox{\ConfigM}{
    \begin{tikzpicture}[scale=0.5]
        \Ygraph
        \draw (C) -- (D);
        \GrayRobot{(A)}{}
        \draw[Gray, line width = 1.2mm] (C) -- (D);
    \end{tikzpicture}
    }
    \newsavebox\ConfigN
    \savebox{\ConfigN}{
    \begin{tikzpicture}[scale=0.5]
        \Ygraph
        \draw (C) -- (D);
        \GrayRobot{(B)}{}
        \draw[Gray, line width = 1.2mm] (C) -- (D);
    \end{tikzpicture}
    }
    \newsavebox\ConfigO
    \savebox{\ConfigO}{
    \begin{tikzpicture}[scale=0.5]
        \Ygraph
        \draw (C) -- (D);
        \draw[Gray, line width = 1.2mm] (A) -- (B);
        \draw[Gray, line width = 1.2mm] (C) -- (D);
    \end{tikzpicture}
    }
    \raisebox{1cm}{
    \begin{tikzpicture}[scale=1.5]
        \Ygraph
        \draw(C)--(D);
        \node[left] at (A) {$a$};
        \node[below left] at (B) {$b$};
        \node[left] at (C) {$c$};
        \node[right] at (D) {$d$};
    \end{tikzpicture}}
    \hspace{1cm}
    \scalebox{.6}{\begin{tikzpicture}[scale=.85]
        \node[regular polygon, regular polygon sides = 6, draw, minimum size = 7cm, line width = 0.5mm] (B) at (0,0) {};
        \node[regular polygon, regular polygon sides = 6, minimum size = 8.5cm] (A) at (0,0) {};
        \node[regular polygon, regular polygon sides = 6, minimum size = 8.5cm] (C) at (0,0) {};
        \draw [line width = .5mm, fill=Gray!20] (B.corner 2) -- (B.corner 6) -- (B.corner 5) -- (B.corner 3) -- cycle;
        \node[] at (A.corner 1) {\usebox\ConfigA};
        \node[] at (A.corner 2) {\usebox\ConfigB};
        \node[] at (A.corner 3) {\usebox\ConfigC};
        \node[yshift = -1mm] at (A.corner 4) {\usebox\ConfigD};
        \node[yshift = -1mm] at (A.corner 5) {\usebox\ConfigE};
        \node[] at (A.corner 6) {\usebox\ConfigF};
        \node[] at ($(C.corner 1) !.5! (C.corner 2)$) {\usebox\ConfigG};
        \node[] at ($(C.corner 2) !.4! (C.corner 3)$) {\usebox\ConfigH};
        \node[] at ($(C.corner 3) !.5! (C.corner 4)$) {\usebox\ConfigI};
        \node[] at ($(C.corner 4) !.5! (C.corner 5)$) {\usebox\ConfigJ};
        \node[] at ($(C.corner 5) !.5! (C.corner 6)$) {\usebox\ConfigK};
        \node[] at ($(C.corner 1) !.4! (C.corner 6)$) {\usebox\ConfigL};
        \node[yshift = 2mm] at ($(C.corner 2) !.5! (C.corner 6)$) {\usebox\ConfigM};
        \node[yshift=-4mm] at ($(C.corner 3) !.5! (C.corner 5)$) {\usebox\ConfigN};
        \node[] at (C) {\usebox\ConfigO};
        \foreach \i in {1,...,6}{
        \node[fill,circle, inner sep = 1mm] at (B.corner \i) {};
        }
    \end{tikzpicture}
    }
    \caption{A graph $Y'$ (left); the configuration space $\UD^2(Y')$ (right)}
    \label{fig:UD2Yprime}
\end{figure}

The configurations described above have been the subjects of several articles. For an introduction to graph configuration spaces, see \cite{AbramsGhrist}. For an introduction to the discrete spaces, see \cite{AbramsThesis}, where it is also shown that under certain assumptions, the discrete spaces are in some sense ``equivalent"\footnote{More specifically, the discrete spaces are \textit{homotopy equivalent} to their non-discrete counterparts. The notion of homotopy equivalence will be briefly discussed in \cref{sec:CellCounts}.} to their non-discrete counterparts. For an in-depth study of the unordered discrete configuration spaces, see \cite{FarleySabalka, FarleyHomology,FarleyCohomology,FarleySabalkaCohomology}. 

More recently, Kozlov has introduced \textit{Stirling Complexes}\footnote{Stirling complexes are named after the so-called \textit{Stirling numbers}. See an introductory combinatorics text such as \cite{Bona_2016} for more information on Stirling numbers.} in \cite{Kozlov_2023}, denoted here by $\St^r(G)$ (where $G$ is again a graph). Stirling complexes have been further studied in \cite{Kozlov_2022b,Hoekstra-Mendoza_2024}. In the Stirling complex, there are $r$ labeled robots, but there are two key differences between $\C^r(G)$ and $\St^r(G)$: in $\St^r(G)$, multiple robots are permitted to occupy the same location on $G,$ and there must be at least one robot occupying each vertex of $G.$ For example, in \cref{fig:StirlingConfigs}, the configurations on the left and middle are configurations in $\St^5(Y).$ Here and in the remaining figures, if two robots occupy the same location, the figure will depict them side-by-side, but in reality, they occupy the same point. For example, in the configuration on the left, both the green/diamond and yellow/star robots are both located on the central vertex.

\begin{figure}[h!]
    \centering
    \begin{tikzpicture}
        \Ygraph
        \RedRobot{(A)}{}
        \GreenRobot{(B)}{}
        \YellowRobot{($(B)+(-.25,0)$)}{}
        \BlueRobot{(C)}{}
        \PurpleRobot{(D)}{}
    \end{tikzpicture}
    \hspace{1cm}
    \begin{tikzpicture}
        \Ygraph
        \RedRobot{(A)}{}
        \GreenRobot{(B)}{}
        \YellowRobot{($(B)!.5!(C)$)}{}
        \BlueRobot{(C)}{}
        \PurpleRobot{(D)}{}
    \end{tikzpicture}
    \hspace{1cm}
    \begin{tikzpicture}
        \Ygraph
        \RedRobot{(A)}{}
        \RedRobot{(B)}{}
        \GreenRobot{($(B)!.5!(C)$)}{}
        \RedRobot{(C)}{}
        \GreenRobot{(D)}{}
    \end{tikzpicture}
    \caption{Configurations in $\St^5(Y)$ (left and middle) and in $\Sr(Y)$ with $\vec{r}=(3,2)$ (right)}
    \label{fig:StirlingConfigs}
\end{figure}
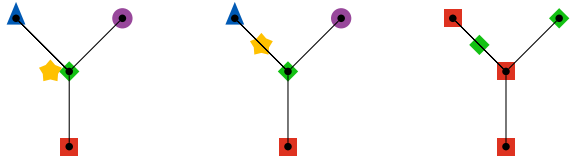

The space $\St^r(G)$ also has a closed cubical cell structure. As an example, for the graph $Y$, one can show that $\St^5(Y)$ consists of 240 0-cells and 360 1-cells. The configuration on the left of \cref{fig:StirlingConfigs} is one of the 0-cells, and the configuration in the middle falls on the interior of a 1-cell.

The main goal of this paper is to work with a generalization of $\St^r(G),$ which we call a \textit{grouped Stirling complex}, in which the robots are placed into groups (indicated by color), and robots within each group must adhere to condition $(\ast)$. For example, the right-hand side of \cref{fig:StirlingConfigs} shows a configuration in which there are three robots in the red/square group, and two in the green/diamond group. We will use a \textit{color vector} $\vec{r}=(3,2)$ (see \cref{defn:GroupedStirling}) to indicate that we have two colors, with three robots of the first color (red) and two robots of the second color (green).

The grouped Stirling complex, which we denote by $\Sr(G)$, also has a closed cell structure. For example, \cref{fig:GroupedStirlingY} shows the configuration space $\Sr(Y)$ for $\vec{r}=(3,2).$ In this case, we have 12 0-cells and 9 1-cells. Notice that unlike the previous examples, $\Sr(Y)$ (with $\vec{r}=(3,2)$) has three connected components. If we start with a configuration in any one of these three connected components, we can move to any other configuration in that component, but it is not possible to move to a configuration in any of the other two connected components. In other words, the space $\Sr(Y)$ is \textit{not} connected in this case. 

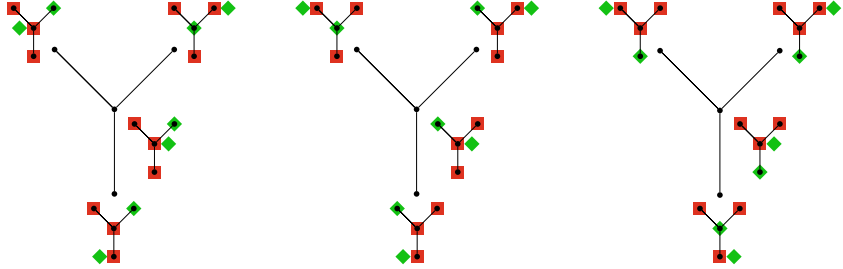
\begin{figure}[h!]
    \centering
    \scalebox{.75}{
    \savebox{\ConfigB}{
    \begin{tikzpicture}[scale=0.5]
        \Ygraph
        \RedRobot{(A)}{}
        \RedRobot{(B)}{}
        \RedRobot{(C)}{}
        \GreenRobot{($(B)+(.5,0)$)}{}
        \GreenRobot{(D)}{}
    \end{tikzpicture}
    }
    \savebox{\ConfigD}{
    \begin{tikzpicture}[scale=0.5]
        \Ygraph
        \RedRobot{(A)}{}
        \RedRobot{(D)}{}
        \RedRobot{(C)}{}
        \GreenRobot{(B)}{}
        \GreenRobot{($(D)+(.5,0)$)}{}
    \end{tikzpicture}
    }
    \savebox{\ConfigC}{
    \begin{tikzpicture}[scale=0.5]
        \Ygraph
        \RedRobot{(A)}{}
        \RedRobot{(B)}{}
        \RedRobot{(C)}{}
        \GreenRobot{($(B)+(-0.5,0)$)}{}
        \GreenRobot{(D)}{}
    \end{tikzpicture}
    }
    \savebox{\ConfigA}{
    \begin{tikzpicture}[scale=0.5]
        \Ygraph
        \RedRobot{(A)}{}
        \RedRobot{(B)}{}
        \RedRobot{(C)}{}
        \GreenRobot{($(A)+(-0.5,0)$)}{}
        \GreenRobot{(D)}{}
    \end{tikzpicture}
    }
    \begin{tikzpicture}[scale=1.5]
        \Ygraph
        \node[below] at (A) {\usebox\ConfigA};
        \node[below right] at (B) {\usebox\ConfigB};
        \node[above left, outer sep = -1em] at (C) {\usebox\ConfigC};
        \node[above right, outer sep = -1em] at (D) {\usebox\ConfigD};
    \end{tikzpicture}
    \savebox{\ConfigB}{
    \begin{tikzpicture}[scale=0.5]
        \Ygraph
        \RedRobot{(A)}{}
        \RedRobot{(B)}{}
        \RedRobot{(D)}{}
        \GreenRobot{($(B)+(.5,0)$)}{}
        \GreenRobot{(C)}{}
    \end{tikzpicture}
    }
    \savebox{\ConfigD}{
    \begin{tikzpicture}[scale=0.5]
        \Ygraph
        \RedRobot{(A)}{}
        \RedRobot{(D)}{}
        \RedRobot{(B)}{}
        \GreenRobot{(C)}{}
        \GreenRobot{($(D)+(.5,0)$)}{};
    \end{tikzpicture}
    }
    \savebox{\ConfigC}{
    \begin{tikzpicture}[scale=0.5]
        \Ygraph
        \RedRobot{(A)}{}
        \RedRobot{(C)}{}
        \RedRobot{(D)}{}
        \GreenRobot{($(C)+(-0.5,0)$)}{}
        \GreenRobot{(B)}{}
    \end{tikzpicture}
    }
    \savebox{\ConfigA}{
    \begin{tikzpicture}[scale=0.5]
        \Ygraph
        \RedRobot{(A)}{}
        \RedRobot{(B)}{}
        \RedRobot{(D)}{}
        \GreenRobot{($(A)+(-0.5,0)$)}{}
        \GreenRobot{(C)}{}
    \end{tikzpicture}
    }
    \begin{tikzpicture}[scale=1.5]
        \Ygraph
        \node[below] at (A) {\usebox\ConfigA};
        \node[below right] at (B) {\usebox\ConfigB};
        \node[above left, outer sep = -1em] at (C) {\usebox\ConfigC};
        \node[above right, outer sep = -1em] at (D) {\usebox\ConfigD};
    \end{tikzpicture}
    \savebox{\ConfigB}{
    \begin{tikzpicture}[scale=0.5]
        \Ygraph
        \RedRobot{(C)}{}
        \RedRobot{(B)}{}
        \RedRobot{(D)}{}
        \GreenRobot{($(B)+(.5,0)$)}{}
        \GreenRobot{(A)}{}
    \end{tikzpicture}
    }
    \savebox{\ConfigD}{
    \begin{tikzpicture}[scale=0.5]
        \Ygraph
        \RedRobot{(C)}{}
        \RedRobot{(B)}{}
        \RedRobot{(D)}{}
        \GreenRobot{($(D)+(.5,0)$)}{}
        \GreenRobot{(A)}{}
    \end{tikzpicture}
    }
    \savebox{\ConfigC}{
    \begin{tikzpicture}[scale=0.5]
        \Ygraph
        \RedRobot{(C)}{}
        \RedRobot{(B)}{}
        \RedRobot{(D)}{}
        \GreenRobot{($(C)+(-.5,0)$)}{}
        \GreenRobot{(A)}{}
    \end{tikzpicture}
    }
    \savebox{\ConfigA}{
    \begin{tikzpicture}[scale=0.5]
        \Ygraph
        \RedRobot{(C)}{}
        \RedRobot{(A)}{}
        \RedRobot{(D)}{}
        \GreenRobot{($(A)+(.5,0)$)}{}
        \GreenRobot{(B)}{}
    \end{tikzpicture}
    }
    \begin{tikzpicture}[scale=1.5]
        \Ygraph;
        \node[below] at (A) {\usebox\ConfigA};
        \node[below right] at (B) {\usebox\ConfigB};
        \node[above left, outer sep = -1em] at (C) {\usebox\ConfigC};
        \node[above right, outer sep = -1em] at (D) {\usebox\ConfigD};
    \end{tikzpicture}
    }
    \caption{The grouped Stirling complex $\Sr(Y)$ with $\vec{r}=(3,2)$}
    \label{fig:GroupedStirlingY}
\end{figure}

The rest of the paper is organized as follows. In \cref{sec:Basics}, we provide the precise definitions of the grouped Stirling complex $\Sr(G)$ and discuss some basic properties. In \cref{sec:Connectivity}, we discuss the connectivity of $\Sr(G)$ and show that $\Sr(G)$ is connected (more specifically, path-connected) whenever there are at least three colors, provided $\vec r$ satisfies some non-triviality conditions:

\begin{thm1}
    If $G$ is a connected graph and $\vec r$ is a non-trivial color vector with at least three colors, then $\Sr(G)$ is path-connected.
\end{thm1} In \cref{sec:CellCounts}, we determine the number of cells of each dimension in $\Sr(G)$ for specific families of color vectors:

\begin{thm2}
       Let $G$ be a simple graph with $n$ vertices and $m$ edges, and let $\vec{r}=(2,1,\dots,1)\in\Z^n$. In $\Sr(G)$ we have 
    \[
    \#(\text{i-cells)}=\begin{cases}
        \frac{n!(n^2+n-2)}{4}&\text{if }i=0\\
        \frac{m(n-1)!(n^2+n-4)}{2}&\text{if }i=1\\
        0&\text{if }i\ge 2\\
    \end{cases}.
    \]
\end{thm2}

\begin{thm3}
 Let $G$ be a simple graph with $n$ vertices and $m$ edges, and let $\vec{r}=(n-1,n-1,\dots,n-1)\in\Z^r$ for some integer $r\ge 2$. In $\Sr(G)$ we have 
    \[
    \#(\text{i-cells)}=\begin{cases}\displaystyle
        \binom{r}{i}\left(m^in^{r-i}-\displaystyle\sum_{v\in V(G)}\deg(v)^i\right)&\text{if }0\le i<r\\
        m^r+m-\displaystyle\sum_{v\in V(G)}\deg(v)^r&\text{if }i=r\\
        0&\text{if }i> r\\
    \end{cases}.
    \]
\end{thm3}
\section{Definitions and Basic Properties of $\Sr(G)$}\label{sec:Basics}

We start with the definition of a grouped Stirling complex. We view a graph $G$ as a topological space, so that the elements of $G$ are individual points on the graph. We use $V(G)$ and $E(G)$ to denote the set of vertices and closed edges of $G$, respectively, and for the remainder of the paper, we denote $|V(G)|$ by $n$ and denote $|E(G)|$ by $m.$ By a ``closed edge" $e,$ we mean the set of all points on the edge $e,$ including the endpoints. By the ``interior" of $e,$ we mean all points on $e$, excluding the endpoints.

\begin{defn}\label[defn]{defn:GroupedStirling}
    A \textit{color vector} is a vector of the form $\vec{r}=(\ell_1,\ell_2,\dots,\ell_r)$ with $\ell_i\in\Z^+.$ We refer to the integers $1,2,\dots,r$ as \textit{colors}, and for each color $i$, $\ell_i$ is the number of $i$-colored robots. Given a graph $G$ and a color vector $\vec r$, the \textit{grouped Stirling complex} $\Sr(G)$ consists of all $r$-tuples of the form 
    \[
    A=(A_1,A_2,\dots,A_r)
    \]
    such that:
    \begin{enumerate}[(a)]
        \item $A_i\subset G$ and $|A_i|=\ell_i$ for $i=1,\dots,r$. We refer to $A_i$ as the \textit{locations of the $i$-colored robots,} and $A$ as a \textit{configuration} in $\Sr(G).$
        \item For each vertex $v\in V(G)$, there is some $i$ such that $v\in A_i$. In other words, for each vertex $v,$ there is some color $i$ such that there is an $i$-colored robot on $v.$
        \item If $A_i$ contains a point on the interior of some $e\in E(G)$, then $A_i$ cannot contain either endpoint of $e$, or any other points on the interior of any edge which shares an endpoint with $e.$ In other words, any two $i$-colored robots must be separated by at least a full open edge of $G$.
    \end{enumerate}

\end{defn}

\begin{ex} \cref{fig:groupedStirlingConfig} shows the path graph $P_3$ and four configurations in $\Sr(P_3)$ with $\vec r = (2,2,1)$. Here and in the remaining figures, we consider red/square to be color 1, green/diamond to be color 2, and blue/triangle to be color 3. The configuration $A$ is given by $A=(\{x,y\},\{x,z\},\{x\}),$ since the 1-colored robots are on the vertices $x$ and $y$ (i.e. $A_1=\{x,y\}$), the 2-colored robots are on $x$ and $z$ (i.e. $A_2=\{x,z\}$), and the 3-colored robot is on $x$ (i.e. $A_3=\{x\}$). Similarly, $A'=(\{x,y\},\{x,z\},\{a\}),$ where $a$ is a point close to $x$ on the edge $e$,  $A''=(\{x,y\},\{b,z\},\{x\}),$ where $b$ is a point close to $y$ on $e$, and $A'''=(\{x,y\},\{b,z\},\{a\}).$

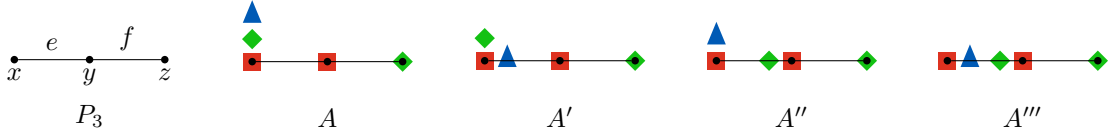
\begin{figure}[h!]
    \centering
    \begin{tikzpicture}[scale=1]
        \fg{\vertex{(0,0)}{x}
        \vertex{(1,0)}{y}
        \vertex{(2,0)}{z}
        \draw (x)--(z);}
        \node[below] at (x) {$x$};
        \node[below] at (y) {$y$};
        \node[below] at (z) {$z$};
        \node[above] at ($.5*(x)+.5*(y)$) {$e$};
        \node[above] at ($.5*(z)+.5*(y)$) {$f$};
        
        \node at (1,-.75) {$P_3$};
    \end{tikzpicture}
    \quaad
    \begin{tikzpicture}[scale=1]
        \fg{\vertex{(0,0)}{x}
        \vertex{(1,0)}{y}
        \vertex{(2,0)}{z}
        \draw (x)--(z);}
        \RedRobot{(x)}{}
        \RedRobot{(y)}{}
        \GreenRobot{($(x)+(0,.3)$)}{}
        \GreenRobot{(z)}{}
        \BlueRobot{($(x)+(0,.6)$)}{}
        \node at (1,-.75) {$A$};
    \end{tikzpicture}
    \quaad
    \begin{tikzpicture}[scale=1]
        \fg{\vertex{(0,0)}{x}
        \vertex{(1,0)}{y}
        \vertex{(2,0)}{z}
        \draw (x)--(z);}
        \RedRobot{(x)}{}
        \RedRobot{(y)}{}
        \GreenRobot{($(x)+(0,.3)$)}{}
        \GreenRobot{(z)}{}
        \BlueRobot{($.7*(x)+.3*(y)$)}{}
        \node at (1,-.75) {$A'$};
    \end{tikzpicture}
    \quaad
    \begin{tikzpicture}[scale=1]
        \fg{\vertex{(0,0)}{x}
        \vertex{(1,0)}{y}
        \vertex{(2,0)}{z}
        \draw (x)--(z);}
        \RedRobot{(x)}{}
        \RedRobot{(y)}{}
        \GreenRobot{($.3*(x)+0.7*(y)$)}{}
        \GreenRobot{(z)}{}
        \BlueRobot{($(x)+(0,.3)$)}{}
        \node at (1,-.75) {$A''$};
    \end{tikzpicture}
    \quaad
    \begin{tikzpicture}[scale=1]
        \fg{\vertex{(0,0)}{x}
        \vertex{(1,0)}{y}
        \vertex{(2,0)}{z}
        \draw (x)--(z);}
        \RedRobot{(x)}{}
        \RedRobot{(y)}{}
        \GreenRobot{($.3*(x)+0.7*(y)$)}{}
        \GreenRobot{(z)}{}
        \BlueRobot{($.7*(x)+.3*(y)$)}{}
        \node at (1,-.75) {$A'''$};
    \end{tikzpicture}
    \caption{The graph $P_3$ and four configurations in $\Sr(P_3)$ with $\vec{r} =(2,2,1)$}
    \label{fig:groupedStirlingConfig}
\end{figure}
\end{ex}
\begin{rmk}
    If the color vector $\vec{r}$ is of the form $\vec{r}=(1,1,\dots,1)$ (i.e. there is only one robot of each color), then the grouped Stirling complex $\Sr(G)$ is homeomorphic to the Stirling complex $\St^r(G)$.
\end{rmk}

As a first observation, we determine necessary and sufficient conditions for the space $\Sr(G)$ to be non-empty.

\begin{prop}
    If $G$ is any graph and $\vec{r}=(\ell_1,\dots,\ell_r)$ is any color vector, then $\Sr(G)$ is non-empty if and only if:
    \begin{enumerate}
        \item $\ell_1+\cdots+\ell_r\ge n$, and
        \item $\ell_i\le n$ for each $i=1,\dots,r$.
    \end{enumerate}
\end{prop}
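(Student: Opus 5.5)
The plan is to prove the two directions separately. For necessity I will use simple counting arguments, and for sufficiency I will build an explicit configuration in which every robot sits on a vertex. The one observation doing most of the work is that condition (c) of \cref{defn:GroupedStirling} only applies when some $A_i$ contains a point in the interior of an edge. So a configuration with all robots on vertices satisfies (c) automatically, even when two robots of the same color sit on adjacent vertices.

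\emph{Necessity of (1).} Suppose $A=(A_1,\dots,A_r)\in\Sr(G)$. By condition (b), $V(G)\subseteq A_1\cup\cdots\cup A_r$. The vertices are $n$ distinct points, so
\[
n\le |A_1\cup\cdots\cup A_r|\le |A_1|+\cdots+|A_r|=\ell_1+\cdots+\ell_r .
\]

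\emph{Necessity of (2).} Fix a color $i$. I will build an injection $\varphi\from A_i\to V(G)$, which gives $\ell_i=|A_i|\le n$. Define $\varphi(p)=p$ if $p$ is a vertex. If $p$ lies in the interior of an edge $e$, let $\varphi(p)$ be a chosen endpoint of $e$. To check injectivity, suppose $\varphi(p)=\varphi(q)$ with $p\ne q$.
\begin{itemize}
\item Both are vertices: then $p=q$, a contradiction.
\item $p$ is interior to $e$ and $q$ is a vertex: then $q$ is an endpoint of $e$, which (c) forbids.
\item $p,q$ are interior to edges $e,f$: then $e$ and $f$ share an endpoint, which (c) forbids. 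This covers $e=f$, since an edge shares endpoints with itself; this reading of (c) matches the intended ``separated by a full open edge'' condition.
\end{itemize}
This is the step needing the most care. Loops and multi-edges cause no trouble, since we only use that each interior point has some endpoint.

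\emph{Sufficiency.} Assume (1) and (2), and list the vertices as $v_0,\dots,v_{n-1}$ with indices taken mod $n$. Set $s_i=\ell_1+\cdots+\ell_{i-1}$ and define
\[
A_i=\{v_{s_i},v_{s_i+1},\dots,v_{s_i+\ell_i-1}\}.
\]
Condition (a) holds because $\ell_i\le n$, so these are $\ell_i$ distinct vertices. Condition (b) holds because the blocks are consecutive and their total length is $\sum\ell_i\ge n$, so together they cover every residue mod $n$. Condition (c) holds vacuously since every robot is on a vertex. Hence $\Sr(G)\neq\emptyset$.
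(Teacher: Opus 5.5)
Your proof is correct and follows the same route as the paper. Necessity of (1) comes from counting via condition (b), necessity of (2) comes from condition (c), and sufficiency comes from a configuration with every robot on a vertex, where (c) holds vacuously. Your explicit injection $A_i\to V(G)$ makes precise the paper's one-line appeal to (c); you also rightly note that (c) must be read as forbidding two $i$-colored robots in the same open edge. Your cyclic block assignment is a concrete instance of the paper's ``cover every vertex, then pad each $A_i$'' construction.
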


\begin{proof}
    If $\vec{r}$ satisfies conditions (1) and (2), we can define a configuration $A=(A_1,A_2,\dots,A_r)\in\Sr(G)$ by first placing each vertex of $G$ in some $A_i$ while keeping $|A_i|\le \ell_i$ (this is possible due to condition (1)). Then, for each $i$, if $|A_i|<\ell_i,$ condition (2) allows us to arbitrarily place $\ell_i-|A_i|$ additional vertices in $A_i$. The resulting $r$-tuple $A$ is a configuration in $\Sr(G).$

    Conversely, if $A=(A_1,\dots,A_r)$ is a configuration in $\Sr(G),$ then since each $v\in V(G)$ must be in some $A_i,$ we must have $\ell_1+\cdots+\ell_r\ge n$, and property (c) of \cref{defn:GroupedStirling} forces $\ell_i\le n$ for each $i=1,\dots,r$.
\end{proof}

We make two additional observations. First, if $\ell_1+\cdots+\ell_r=n$, then in any configuration, there must be exactly one robot at each vertex, so that no robots can move. In this case, $\Sr(G)$ is a discrete space.

Second, if $\ell_i=n$ for some $i$, then the $i$-colored robots must be located at each vertex of $G$, so that condition (b) of \cref{defn:GroupedStirling} is automatically satisfied by color $i$ for every $v$. This means that the robots of the remaining colors can move freely without regard to condition (b), so that in this case, $\Sr(G)$ is homeomorphic to a product of unordered discrete spaces of the form 
\[
\UD^{\ell_1}(G)\times\cdots\times\UD^{\ell_r}(G).
\]

So, we will only be interested in the case in which $\ell_1+\cdots+\ell_r>n$ and $\ell_i<n$ for each $i.$

\begin{defn}\label[defn]{NonTrivialColorVector}
    A \textit{non-trivial} color vector is a color vector $\vec{r} = (\ell_1,\dots,\ell_r)$ such that:
    
    \begin{enumerate}
        \item $\ell_1+\cdots+\ell_r> n$, and
        \item $\ell_i< n$ for each $i=1,\dots,r$.
    \end{enumerate}
\end{defn}

As noted in \cref{sec:Intro}, the grouped Stirling complex $\Sr(G)$ has a \textit{closed cell structure.} We will now describe the \textit{cells} of $\Sr(G)$.

\begin{defn}\label[defn]{defn:cell}
    A \textit{cell} of $\Sr(G)$ (with $\vec r=(\ell_1,\dots,\ell_r)$) is an $r$-tuple of the form 
    \[
    c=(c_1,c_2,\dots,c_r)
    \]
    such that
    \begin{enumerate}[(a)]
        \item $c_i\subset V(G)\cup E(G)$ and $|c_i|=\ell_i$ for $i=1,\dots,r.$ We will view $c_i$ as the set of the vertices and edges of $G$ on which there are $i$-colored robots.
        \item For each vertex $v\in V(G),$ there is some $i$ with $v\in c_i.$
        \item For $i=1,\dots,r,$ we have $\sigma\cap\sigma'=\emptyset$ for all distinct vertices and/or closed edges $\sigma,\sigma'\in c_i$.
    \end{enumerate}
    The \textit{dimension} of a cell $c$ is the number of (not necessarily distinct) edges that appear somewhere in $c.$ If the dimension of $c$ is $k,$ we say $c$ is an \textit{$k$-cell}.
\end{defn}

We will view a cell $c=(c_1,c_2,\dots,c_r)$ as the union of all configurations in $\Sr(G)$ in which each $i$-colored robot is located on one of the vertices or edges of $c_i.$

\begin{ex}
    
The square in \cref{fig:2Cell} shows several cells of $\Sr(P_3)$ with $\vec r= (2,2,1)$.

The bottom left corner of the square is the 0-cell $c=(\{x,y\},\{x,z\},\{x\})$. This cell consists of the single configuration in which the two 1-colored (i.e. red/square) robots are on $x$ and $y$, the two 2-colored (i.e. green/diamond) robots are on $x$ and $z,$ and the one 3-colored (i.e. blue/triangle) robot is on $x.$ In other words, $c$ consists of the single configuration $A$ shown in \cref{fig:groupedStirlingConfig}.  Likewise, the other three corners of the square are other 0-cells.

\def\scalefactor{.8}
\savebox{\ConfigA}{
    \scalebox{\scalefactor}{\begin{tikzpicture}[scale=0.7]
    \fg{\vertex{(0,0)}{x}
    \vertex{(1,0)}{y}
    \vertex{(2,0)}{z}
    \draw (x)--(z);}
    \RedRobot{(0,0)}{}
    \RedRobot{(1,0)}{}
    \GreenRobot{(2,0)}{}
    \GreenRobot{(0,.4)}{}
    \BlueRobot{(0,.8)}{}
    \node at (1,-.75) {$c$};
    \end{tikzpicture}
}}
\savebox{\ConfigB}{
    \scalebox{\scalefactor}{\begin{tikzpicture}[scale=0.7]
    \fg{\vertex{(0,0)}{x}
    \vertex{(1,0)}{y}
    \vertex{(2,0)}{z}
    \draw (x)--(z);}
    \RedRobot{(0,0)}{}
    \RedRobot{(1,0)}{}
    \GreenRobot{(2,0)}{}
    \GreenRobot{(0,.4)}{}
    \BlueRobot{(1,.4)}{}
    \end{tikzpicture}
}}
\savebox{\ConfigC}{
    \scalebox{\scalefactor}{\begin{tikzpicture}[scale=0.7]
    \fg{\vertex{(0,0)}{x}
    \vertex{(1,0)}{y}
    \vertex{(2,0)}{z}
    \draw (x)--(z);}
    \RedRobot{(0,0)}{}
    \RedRobot{(1,0)}{}
    \GreenRobot{(2,0)}{}
    \GreenRobot{(1,.4)}{}
    \BlueRobot{(0,.4)}{}
    \end{tikzpicture}
}}
\savebox{\ConfigD}{
    \scalebox{\scalefactor}{\begin{tikzpicture}[scale=0.7]
   \fg{\vertex{(0,0)}{x}
    \vertex{(1,0)}{y}
    \vertex{(2,0)}{z}
    \draw (x)--(z);}
    \RedRobot{(0,0)}{}
    \RedRobot{(1,0)}{}
    \GreenRobot{(2,0)}{}
    \GreenRobot{(1,.4)}{}
    \BlueRobot{(1,.8)}{}
    \end{tikzpicture}
}}
\newsavebox{\OneCellA}
\savebox{\OneCellA}{
    \scalebox{\scalefactor}{\begin{tikzpicture}[scale=0.7]
    \fg{\vertex{(0,0)}{x}
    \vertex{(1,0)}{y}
    \vertex{(2,0)}{z}
    \draw (x)--(z);}
    \draw[line width = 5, Blue] (x)--(y);
    \RedRobot{(0,0)}{}
    \RedRobot{(1,0)}{}
    \GreenRobot{(2,0)}{}
    \GreenRobot{(0,.4)}{}
    \node at (1,-.75) {$c'$};
    \end{tikzpicture}
}}
\newsavebox{\OneCellB}
\savebox{\OneCellB}{
    \scalebox{\scalefactor}{\begin{tikzpicture}[scale=0.7]
    \fg{\vertex{(0,0)}{x}
    \vertex{(1,0)}{y}
    \vertex{(2,0)}{z}
    \draw (x)--(z);}
    \draw[line width = 5, Green] (x)--(y);
    \RedRobot{(0,0)}{}
    \RedRobot{(1,0)}{}
    \GreenRobot{(2,0)}{}
    \BlueRobot{(0,.4)}{}
    \node at (1,-.75) {$c''$};
    \end{tikzpicture}
}}
\newsavebox{\OneCellC}
\savebox{\OneCellC}{
    \scalebox{\scalefactor}{\begin{tikzpicture}[scale=0.7]
    \fg{\vertex{(0,0)}{x}
    \vertex{(1,0)}{y}
    \vertex{(2,0)}{z}
    \draw (x)--(z);}
    \draw[line width = 5, Blue] (x)--(y);
    \RedRobot{(0,0)}{}
    \RedRobot{(1,0)}{}
    \GreenRobot{(2,0)}{}
    \GreenRobot{(1,.4)}{}
    \end{tikzpicture}
}}
\newsavebox{\OneCellD}
\savebox{\OneCellD}{
    \scalebox{\scalefactor}{\begin{tikzpicture}[scale=0.7]
    \fg{\vertex{(0,0)}{x}
    \vertex{(1,0)}{y}
    \vertex{(2,0)}{z}
    \draw (x)--(z);}
    \draw[line width = 5, Green] (x)--(y);
    \RedRobot{(0,0)}{}
    \RedRobot{(1,0)}{}
    \GreenRobot{(2,0)}{}
    \BlueRobot{(1,.4)}{}
    \end{tikzpicture}
}}
\newsavebox{\TwoCell}
\savebox{\TwoCell}{
    \scalebox{1}{\begin{tikzpicture}[scale=0.7]
    \fg{\vertex{(0,0)}{x}
    \vertex{(1,0)}{y}
    \vertex{(2,0)}{z}
    \draw (x)--(z);}
    \draw[line width = 3, Green] (0,-.1)--(1,-.1);
    \draw[line width = 3, Blue] (0,.1)--(1,.1);
    \RedRobot{(0,0)}{}
    \RedRobot{(1,0)}{}
    \GreenRobot{(2,0)}{}
    \node at (1,-.75) {$c'''$};
    \end{tikzpicture}
}}
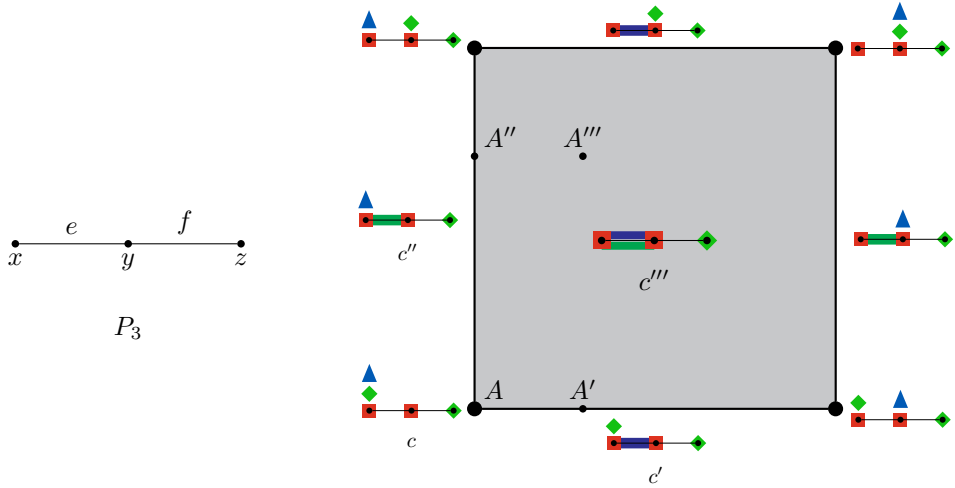
\begin{figure}[h!]
    \centering
        \raisebox{2cm}{\begin{tikzpicture}[scale=1.5]
        \vertex{(0,0)}{x}
        \vertex{(1,0)}{y}
        \vertex{(2,0)}{z}
        \node[below] at (x) {$x$};
        \node[below] at (y) {$y$};
        \node[below] at (z) {$z$};
        \node[above] at ($.5*(x)+.5*(y)$) {$e$};
        \node[above] at ($.5*(z)+.5*(y)$) {$f$};
        \draw (x)--(z);
        \node at (1,-.75) {$P_3$};
    \end{tikzpicture}}\hspace{1cm}
    \begin{tikzpicture}[scale=1.2]
        \draw[fill=Gray!50] (0,0) rectangle (4,4);
        \vertex[2]{(0,0)}{A}
        \vertex[2]{(4,0)}{B}
        \vertex[2]{(4,4)}{D}
        \vertex[2]{(0,4)}{C}
        \node[xshift=-.85cm] at (A) {\usebox\ConfigA};
        \node[xshift=.85cm] at (B) {\usebox\ConfigB};
        \node[below] at ($0.5*(A)+0.5*(B)$) {\usebox\OneCellA};
        \draw[thick] (A)--(B);
        \node[xshift=-.85cm,yshift=.25cm] at (C) {\usebox\ConfigC};
        \draw[thick] (A)--(C);
        \node[left] at ($0.5*(A)+0.5*(C)$) {\usebox\OneCellB};
        \node[xshift=.85cm,yshift=.25cm] at (D) {\usebox\ConfigD};
        \draw[thick] (C)--(D);
        \node[above] at ($0.5*(C)+0.5*(D)$) {\usebox\OneCellC};
        \draw[thick] (B)--(D);
        \node[right] at ($0.5*(B)+0.5*(D)$) {\usebox\OneCellD};
        \node at (2,1.6) {\usebox\TwoCell};
        \node[above right] at (A) {$A$};
        \vertex{($.7*(A)+0.3*(B)$)}{Aprime};
        \node[above]  at (Aprime) {$A'$};
        \vertex{($.3*(A)+0.7*(C)$)}{Ap};
        \node[above right]  at (Ap) {$A''$};
        \vertex{($0.3*(B)+0.7*(C)$)}{Appp};
        \node[above]  at (Appp) {$A'''$};
    \end{tikzpicture}
    \caption{Four 0-cells, four 1-cells, and one 2-cell of $\Sr(P_3)$ with $\vec r=(2,2,1)$}
    \label{fig:2Cell}
\end{figure}

Now, consider the bottom edge of the square. This is the 1-cell $c'=(\{x,y\},\{x,z\},\{e\}),$ which consists of all configurations in which the two 1-colored robots are on $x$ and $y,$ the two 2-colored robots are on $x$ and $z,$ and the one 3-colored robot is somewhere on the edge $e.$ Points on the bottom edge of the square close to the bottom left corner are configurations in which the 3-colored robot on $e$ is close to the vertex $x$; points on the bottom edge close to the bottom right corner are configurations in which the 3-colored robot on $e$ is close to $y.$ For example, the configuration $A'$ in \cref{fig:groupedStirlingConfig} is a point on the bottom edge close to the bottom left corner.  Similarly, the left edge of the square is the 1-cell $c''=(\{x,y\},\{z,e\},\{x\});$ the configuration $A''$ from \cref{fig:groupedStirlingConfig} falls on this 1-cell.

Finally, consider the solid square. This is the 2-cell $c'''=(\{x,y\},\{z,e\},\{e\}),$ which consists of all configurations in which the two 1-colored robots are on $x$ and $y,$ one 2-colored robot is on $z$ and the other is somewhere on $e$, and the one 3-colored robot is also somewhere on the edge $e.$ The location of a point in the square relative to the four corners determines the location of the 2- and 3-colored robots on $e.$ For example, the configuration $A'''$ in \cref{fig:groupedStirlingConfig} is near the top left corner since the 2-colored robot on $e$ is close to $y$ and the 3-colored robot on $e$ is close to $x.$ 
\end{ex}

The square in \cref{fig:2Cell} is one piece of the entire Stirling complex $\Sr(P_3).$ One can show that there are a total of 21 0-cells, 32 1-cells, and 10 2-cells in $\Sr(P_3)$, and $\Sr(P_3)$ is the union of all of these cells. In general, for any graph $G$ and any color vector $\vec r,$ $\Sr(G)$ is the union of all of its cells.

\begin{prop}\label[prop]{prop:UnionOfCells}
    For any graph $G$ and any color vector $\vec r = (\ell_1,\dots,\ell_r),$ the grouped Stirling complex $\Sr(G)$ is the union of the cells defined in \cref{defn:cell}.
\end{prop}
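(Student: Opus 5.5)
We prove two inclusions. Write $X$ for the union of all cells, where a cell $c=(c_1,\dots,c_r)$ is viewed as the set of configurations $A=(A_1,\dots,A_r)$ such that, for each $i$, the $i$-colored robots can be matched bijectively to the elements of $c_i$, with each robot lying on its assigned vertex or closed edge.

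\emph{$X\subseteq \Sr(G)$.} Take a cell $c$ and a configuration $A$ in $c$. Since the elements of $c_i$ are pairwise disjoint closed sets (condition (c) of \cref{defn:cell}), the $\ell_i$ robots of color $i$ lie at distinct points, so $|A_i|=\ell_i$ and condition (a) of \cref{defn:GroupedStirling} holds. Condition (b) of \cref{defn:cell} gives a color $i$ with $v\in c_i$; the robot assigned to the vertex $v$ sits exactly at $v$, so $v\in A_i$, which is condition (b) of \cref{defn:GroupedStirling}. For condition (c), suppose $a\in A_i$ lies in the interior of an edge $e$. Then $a$ must be assigned to $e$ itself, since no vertex and no other closed edge contains an interior point of $e$. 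Every other $i$-colored robot lies in some $\sigma\in c_i$ with $\sigma\cap e=\emptyset$. Hence that robot is not an endpoint of $e$, and it is not interior to an edge $e'$ sharing an endpoint with $e$: in that case $\sigma$ would have to be $e'$, and $e'\cap e\neq\emptyset$.

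\emph{$\Sr(G)\subseteq X$.} Given $A\in\Sr(G)$, build $c$ robot by robot. If $a\in A_i$ is a vertex $v$, put $v$ in $c_i$. If $a$ is interior to an edge $e$, put $e$ in $c_i$. Condition (b) of \cref{defn:cell} follows from condition (b) of \cref{defn:GroupedStirling}.

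The remaining work is to check pairwise disjointness, which also gives $|c_i|=\ell_i$ (distinct robots receive distinct elements). There are three cases for two distinct $i$-robots $a,a'$.
\begin{enumerate}
\item Both are vertices: they are distinct points, so their cells are disjoint.
\item $a$ is interior to $e$ and $a'$ is a vertex $w$: by (c) of \cref{defn:GroupedStirling}, $w$ is not an endpoint of $e$, so $\{w\}\cap e=\emptyset$.
\item $a,a'$ are interior to edges $e,e'$: condition (c) forbids $e'$ from sharing an endpoint with $e$. It also forbids $e'=e$, because an edge shares its endpoints with itself; the phrase ``any two $i$-colored robots must be separated by a full open edge'' supports this reading. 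Then $e\cap e'=\emptyset$, since for closed edges this intersection consists only of shared endpoints.
\end{enumerate}

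The main obstacle is case 3: reading condition (c) of \cref{defn:GroupedStirling} carefully enough to rule out two same-colored robots on the same open edge, and handling multigraph subtleties such as loops or parallel edges. For a loop or parallel edge, the edges share endpoints, so they are again excluded and the argument is unchanged.
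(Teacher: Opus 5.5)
Your proof is correct and follows the same approach as the paper: assign to each robot the vertex it sits on, or the edge whose interior contains it, and verify the conditions of \cref{defn:cell}. You give more detail than the paper, which dismisses the inclusion of each cell in $\Sr(G)$ as a ``similar argument'' and, when asserting $|c_i|=\ell_i$, uses implicitly the reading of condition (c) that forbids two same-colored robots on one open edge, which you make explicit.
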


\begin{proof}
    First consider a configuration $A=(A_1,\dots,A_r),$ as defined in \cref{defn:GroupedStirling}. We will show that this configuration falls in some cell $c.$ For each color $i=1,\dots,r,$ the set of locations of the $i$-colored robots is of the form $A_i=\{x^1_i,\dots,x_i^{\ell_i}\},$ where each $x_i^j$ is some point in $G$. For example, for the configuration $A'''=(\{x,y\},\{b,z\},\{a\})$ in \cref{fig:groupedStirlingConfig}, the set of locations of the 1-colored robots is $A_1=\{x,y\},$ so we have $x_1^1=x$ and $x_1^2=y.$ Similarly, for the 2-colored robots, we have $x_2^1=b$ and $x_2^2=z,$ and for the 3-colored robot, we have $x_3^1=a.$ 
    
    Now, for each color $i$, let $c_i=\{\sigma_i^1,\dots,\sigma_i^{\ell_i}\}$, where $\sigma_i^j=x_i^j$ if $x_i^j$ is a vertex and $\sigma_i^j=e$ if $x_i^j$ falls on the interior of some edge $e.$ For example, for $A'''=(\{x,y\},\{b,z\},\{a\})$, since both 1-colored robots fall on vertices, we have $\sigma_1^1=x_1^1=x$ and $\sigma_1^2=x_1^2=y,$ so $c_1=\{x,y\}.$ For the 2-colored robots, since $x_2^1=b,$ and $b$ falls on the interior of the edge $e,$ we have $\sigma_2^1=e,$ but since $x_2^2=z$ is a vertex, we have $\sigma_2^2=z,$ so $c_2=\{e,z\}.$ Likewise, for the 3-colored robot, we have $x_3^1=a,$ which falls on the interior of $e,$ so $c_3=\{e\}.$

    Consider the $r$-tuple $c=(c_1,\dots,c_r).$ We will show this is a valid cell according to \cref{defn:cell}. First, by construction of $c,$ each element in $c_i$ is either a vertex or an edge of $G$, and $|c_i|=\ell_i$. Second, since $A$ is a configuration in $\Sr(G),$ for each vertex $v\in G,$ there is some $i$ with $v\in A_i,$ so $v\in c_i.$ Finally, for each fixed $i$, if $\sigma$ and $ \sigma'$ are distinct elements of $c_i,$ we must show $\sigma\cap\sigma'=\emptyset.$ If both $\sigma$ and $\sigma'$ are vertices, we trivially have $\sigma\cap\sigma'=\emptyset$. If at least one (say $\sigma$) is an edge $e$, this means there is a point $x\in A_i$ where $x$ falls on the interior of $e.$ Therefore, condition (c) of \cref{defn:GroupedStirling} says there are no other points in $A_i$ on either endpoint of $e$, or on the interior of any edge which shares an endpoint with $e.$ Thus, $\sigma'$ is not an endpoint of $e$ or an edge which shares an endpoint with $e,$ so $\sigma\cap \sigma'=\emptyset.$ Thus, $c$ is a cell which contains the configuration $A$.

    A similar argument shows that for any cell $c$, any point in $c$ is a configuration in $\Sr(G)$. Therefore, $A$ is a configuration in $\Sr(G)$ if and only if $A$ is in some cell $c,$ so $\Sr(G)$ is the union of all of its cells.
\end{proof}

Next, we introduce notation that will be used in \cref{sec:Connectivity,sec:CellCounts}.
\begin{defn}
    For a cell $c$ and a vertex or edge $\sigma$, let $c(\sigma)=\{i\mid \sigma\in c_i\}$. A vertex $v\in V(G)$ is \textit{available} in $c$ if $|c(v)|\ge 2.$
\end{defn} 

In other words, $c(\sigma)$ is the list of the colors on the vertex or edge $\sigma$ in $c,$ and a vertex $v$ is available if there are at least two robots on $v$. For example, for the cell $c'=(\{x,y\},\{x,z\},\{e\})$ in \cref{fig:2Cell}, we have 
\[
c'(x)=\{1,2\},\ c'(y)=\{1\},\ c'(z)=\{2\},\ c'(e)=\{3\},\ c'(f)=\emptyset.
\]

The only vertex which is available in $c'$ is $x.$ 

We end this section with the following upper bound on the dimension of a cell.

\begin{prop}\label[prop]{prop:MaxDimension}
    Given a color vector $\vec r = (\ell_1,\ell_2,\dots,\ell_r)$ and any cell $c$ of $\Sr(G)$, we have 
    \[
    \dim(c)\le \ell_1+\cdots+\ell_r-n.
    \]
\end{prop}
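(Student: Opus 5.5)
We argue by counting the entries of $c$. Write $N=\ell_1+\cdots+\ell_r$ for the total number of entries of $c$, counted over all colors with multiplicity; by part (a) of \cref{defn:cell} this is exactly $\sum_i |c_i|$. Each entry $\sigma\in c_i$ is either a vertex or an edge. Let $V_c$ be the number of pairs $(i,\sigma)$ with $\sigma\in c_i$ a vertex, and $E_c$ the number of such pairs with $\sigma$ an edge. Then $N=V_c+E_c$. By the definition of dimension, which counts edges with multiplicity (``not necessarily distinct''), we have $\dim(c)=E_c$.

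Next, by part (b) of \cref{defn:cell}, every vertex $v\in V(G)$ lies in some $c_i$. Hence the map $(i,\sigma)\mapsto\sigma$, restricted to vertex entries, is a surjection onto $V(G)$, and therefore $V_c\ge n$. Combining the two facts,
\[
\dim(c)=E_c=N-V_c\le \ell_1+\cdots+\ell_r-n,
\]
which is the claimed bound.

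The only point needing care is the precise meaning of ``dimension'': it must count an edge once for each color that has it, as in the 2-cell $c'''=(\{x,y\},\{z,e\},\{e\})$, where $e$ appears twice and the dimension is $2$. Within a single color the entries are distinct, since $c_i$ is a set, so the multiplicity of edges comes only from different colors. This matches $\sum_i|c_i\cap E(G)|$, so the computation above is consistent with the definition. Condition (c) of \cref{defn:cell} is not needed for this bound.
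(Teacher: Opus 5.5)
Your proof is correct and follows the paper's argument: the $\ell_1+\cdots+\ell_r$ entries of $c$ must include every one of the $n$ vertices, so at most $\ell_1+\cdots+\ell_r-n$ of them can be edges. Your explicit bookkeeping of edge entries with multiplicity across colors just spells out what the paper's shorter proof leaves implicit.
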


\begin{proof}
    Recall $\dim(c)$ is the number of edges that appear somewhere in $c,$ and $\ell_1+\cdots+\ell_r$ is the total number of vertices and edges in $c$. Since $G$ has $n$ vertices, and each vertex must appear somewhere in $c$, this means we can have at most $\ell_1+\cdots+\ell_r-n$ edges in $c.$ Therefore, $\dim(c)\le\ell_1+\cdots+\ell_r-n.$
\end{proof}

\section{Connectedness of $\Sr(G)$}\label{sec:Connectivity}

In this section, we describe how to move between configurations in $\Sr(G).$ More specifically, we will describe \textit{paths} in $\Sr(G).$ Given configurations $A$ and $B$, a path in $\Sr(G)$ from $A$ to $B$ is a continuous function $\gamma\from[0,1]\to \Sr(G)$ with $\gamma(0)=A$ and $\gamma(1)=B.$ Here, we think of the interval $[0,1]$ as a ``time" interval, so that for each $t\in[0,1],$ $\gamma(t)$ gives the positions of the robots at time $t$, with the robots starting in configuration $A$ at $t=0$ and ending in configuration $B$ at $t=1.$

\begin{ex}\label[ex]{ex:path} \cref{fig:path} gives configurations $A$ (left) and $B$ (right). Notice the only difference between $A$ and $B$ is that the green/diamond robot is on $x$ in $A$ and on $y$ in $B$. We can define a path in $\Sr(G)$ from $A$ to $B$ by letting $\gamma(t)$ be the configuration in which the green/diamond robot's location on the edge $e$ is proportional to $t$, and the red/square and blue/triangle robots stay fixed on $x$ and $y$. It is clear that this function is continuous (small perturbations in $t$ result in small perturbations in the location of the green/diamond robot).

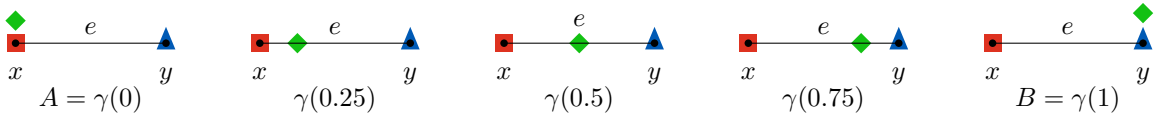
\begin{figure}[h!]
    \centering
    \begin{tikzpicture}
        \fg{
        \vertex{(0,0)}{x}
        \vertex{(2,0)}{y}
        \node[below,yshift=-2mm] at (x) {$x$};
        \node[below,yshift=-2mm] at (y) {$y$};
        \draw (0,0) to node[above,pos=0.5]{$e$} (2,0);
        \node at (1,-.75) {$A=\gamma(0)$};
        }
        \RedRobot{(x)}{}
        \GreenRobot{($(x)+(0,.3)$)}{}
        \BlueRobot{(y)}{}
    \end{tikzpicture}
    \quaad
    \begin{tikzpicture}
        \fg{
        \vertex{(0,0)}{x}
        \vertex{(2,0)}{y}
        \node[below,yshift=-2mm] at (x) {$x$};
        \node[below,yshift=-2mm] at (y) {$y$};
        \draw (0,0) to node[above,pos=0.5]{$e$} (2,0);
        \node at (1,-.75) {$\gamma(0.25)$};
        }
        \RedRobot{(x)}{}
        \GreenRobot{($.75*(x)+.25*(y)$)}{}
        \BlueRobot{(y)}{}
    \end{tikzpicture}
    \quaad
    \begin{tikzpicture}
        \fg{
        \vertex{(0,0)}{x}
        \vertex{(2,0)}{y}
        \node[below,yshift=-2mm] at (x) {$x$};
        \node[below,yshift=-2mm] at (y) {$y$};
        \draw (0,0) to node[above,pos=0.5,yshift=1mm]{$e$} (2,0);
        \node at (1,-.75) {$\gamma(0.5)$};
        }
        \RedRobot{(x)}{}
        \GreenRobot{($.5*(x)+.5*(y)$)}{}
        \BlueRobot{(y)}{}
    \end{tikzpicture}
    \quaad
    \begin{tikzpicture}
        \fg{
        \vertex{(0,0)}{x}
        \vertex{(2,0)}{y}
        \node[below,yshift=-2mm] at (x) {$x$};
        \node[below,yshift=-2mm] at (y) {$y$};
        \draw (0,0) to node[above,pos=0.5]{$e$} (2,0);
        \node at (1,-.75) {$\gamma(0.75)$};
        }
        \RedRobot{(x)}{}
        \GreenRobot{($.25*(x)+.75*(y)$)}{}
        \BlueRobot{(y)}{}
    \end{tikzpicture}
    \quaad
    \begin{tikzpicture}
        \fg{
        \vertex{(0,0)}{x}
        \vertex{(2,0)}{y}
        \node[below,yshift=-2mm] at (x) {$x$};
        \node[below,yshift=-2mm] at (y) {$y$};
        \draw (0,0) to node[above,pos=0.5]{$e$} (2,0);
        \node at (1,-.75) {$B=\gamma(1)$};
        }
        \RedRobot{(x)}{}
        \GreenRobot{($(y)+(0,.4)$)}{}
        \BlueRobot{(y)}{}
    \end{tikzpicture}
    \caption{A path $\gamma$ from $A$ to $B$}
    \label{fig:path}
\end{figure}
\end{ex}

In general, we can describe paths between various configurations by moving one robot along an edge at a time as in \cref{ex:path} (although not every path in $\Sr(G)$ is of this form).

We say that $\Sr(G)$ is \textit{path-connected}\footnote{In topology, a space being ``path-connected" is a stronger statement than being ``connected." For more information regarding the relationship between these two notions, see any standard topology book, such as \cite{Munkres_2000}.} if there is a path between any two configurations in $\Sr(G)$. In \cref{fig:GroupedStirlingY}, we saw that for the graph $Y$ and the color vector $\vec r = (3,2),$ $\Sr(Y)$ is \textit{not} path-connected. We encourage the reader to convince themselves that for $\vec r =(3,3),$ $\Sr(Y)$ \textit{is} path-connected. This shows that with only two colors, $\Sr(G)$ may or may not be path-connected. However, we ultimately show in \cref{thm:SrGConnected} that $\Sr(G)$ is connected provided $\vec{r}$ is a non-trivial color vector with at least three colors.

\textit{For the remainder of this section, we assume $\vec r$ is a non-trivial color vector with at least three colors.} In particular, this means that in any 0-cell, there must be at least one available vertex.

\begin{ex}\label[ex]{ex:movingrobots}
Consider the two 0-cells $c$ and $c'$ in \cref{fig:SwapAdjacentEx}. To obtain a path in $\Sr(G)$ from $c$ to $c',$ we must \textit{swap} the green/diamond robot on $x$ with the red/square robot on $y$ (see \cref{defn:SwapColors}).

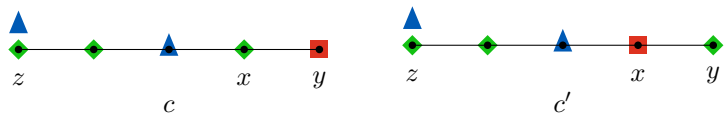
\begin{figure}[h!]
    \centering
    \begin{tikzpicture}
        \fg{\foreach\x in {0,...,4}{\vertex{(\x,0)}{}}
        \draw (0,0) -- (4,0);}
        \GreenRobot{(0,0)}{G1}
        \BlueRobot{(0,.3)}{Y1}
        \GreenRobot{(1,0)}{G2}
        \BlueRobot{(2,0)}{Y2}
        \GreenRobot{(3,0)}{G3}
        \RedRobot{(4,0)}{R1}
        \node[yshift=-4mm] at (0,0) {$z$};
        \node[yshift=-4mm] at (3,0) {$x$};
        \node[yshift=-4mm] at (4,0) {$y$};
        \node at (2,-.75) {$c$};
    \end{tikzpicture}
    \quaad
    \begin{tikzpicture}
        \fg{\foreach\x in {0,...,4}{\vertex{(\x,0)}{}}
        \draw (0,0) -- (4,0);}
        \GreenRobot{(0,0)}{G1}
        \BlueRobot{(0,.3)}{Y1}
        \GreenRobot{(1,0)}{G2}
        \BlueRobot{(2,0)}{Y2}
        \RedRobot{(3,0)}{R1}
        \GreenRobot{(4,0)}{G3}
        \node[yshift=-4mm] at (0,0) {$z$};
        \node[yshift=-4mm] at (3,0) {$x$};
        \node[yshift=-4mm] at (4,0) {$y$};
        \node at (2,-.75) {$c'$};
    \end{tikzpicture}
    \caption{Two 0-cells}
    \label{fig:SwapAdjacentEx}
\end{figure}

Since there must be a robot on every vertex, we must ``borrow" the blue/triangle robot from $z$ so that we can swap the robots on $x$ and $y$. To do this, we can first move the blue/triangle robot from $z$ onto the second vertex from the left, then move the green/diamond robot from the second to the third vertex, then finally move the blue/triangle robot from the third vertex onto $x.$  See \cref{fig:Leapfrog}. In \cref{fig:Leapfrog} and in the remaining figures, numbers above arcs indicate the order in which the robots are moved. We refer to this process as \textit{leapfrogging} (see \cref{defn:Leapfrog}).

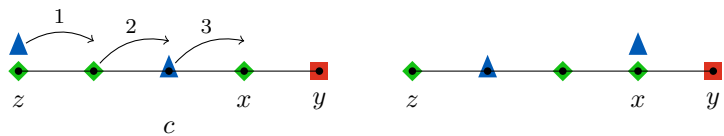
\begin{figure}[h!]
    \centering
    \begin{tikzpicture}
        \fg{\foreach\x in {0,...,4}{\vertex{(\x,0)}{}}
        \draw (0,0) -- (4,0);}
        \GreenRobot{(0,0)}{G1}
        \BlueRobot{(0,.3)}{Y1}
        \GreenRobot{(1,0)}{G2}
        \BlueRobot{(2,0)}{Y2}
        \GreenRobot{(3,0)}{G3}
        \RedRobot{(4,0)}{R1}
        \node[yshift=-4mm] at (0,0) {$z$};
        \node[yshift=-4mm] at (3,0) {$x$};
        \node[yshift=-4mm] at (4,0) {$y$};
        \draw [bend left, ->] (Y1) to node[pos=0.5,above] {\footnotesize1}  ($(G2)+(0,0.4)$);
        \draw [bend left, ->] (G2) to node[pos=0.5,above] {\footnotesize2}  ($(Y2)+(0,0.4)$);
        \draw [bend left, ->] (Y2) to node[pos=0.5,above] {\footnotesize3}  ($(G3)+(0,0.4)$);
        \node at (2,-0.75) {$c$};
    \end{tikzpicture}
    \quaad
    \begin{tikzpicture}
        \fg{\foreach\x in {0,...,4}{\vertex{(\x,0)}{}}
        \draw (0,0) -- (4,0);}
        \GreenRobot{(0,0)}{G1}
        \BlueRobot{(1,0)}{Y1}
        \GreenRobot{(2,0)}{G2}
        \BlueRobot{(3,0.3)}{Y2}
        \GreenRobot{(3,0)}{G3}
        \RedRobot{(4,0)}{R1}
        \node[yshift=-4mm] at (0,0) {$z$};
        \node[yshift=-4mm] at (3,0) {$x$};
        \node[yshift=-4mm] at (4,0) {$y$};
        \node at (2,-0.75) {$\phantom c$};
    \end{tikzpicture}
\caption{Leapfrogging}
\label{fig:Leapfrog}
\end{figure}

After this, we can swap the green/diamond and red/square robots on $x$ and $y$ by moving the green/diamond robot onto $y$ then moving the red/square robot onto $x$. See \cref{fig:SwapRG}.

\begin{figure}[h!]
    \centering
    \begin{tikzpicture}
        \fg{\foreach\x in {0,...,4}{\vertex{(\x,0)}{}}
        \draw (0,0) -- (4,0);}
        \GreenRobot{(0,0)}{G1}
        \BlueRobot{(1,0)}{Y1}
        \GreenRobot{(2,0)}{G2}
        \BlueRobot{(3,0.3)}{Y2}
        \GreenRobot{(3,0)}{G3}
        \RedRobot{(4,0)}{R1}
        \node[yshift=-4mm] at (0,0) {$z$};
        \node[yshift=-4mm] at (3,0) {$x$};
        \node[yshift=-4mm] at (4,0) {$y$};
        \draw [bend left, ->] (G3) to node[pos=0.5,above] {\footnotesize1}  ($(R1)+(0,0.4)$);
        \draw [bend left, ->] (R1) to node[pos=0.5,below] {\footnotesize2}  (G3);
    \end{tikzpicture}
    \quaad
    \begin{tikzpicture}
        \fg{\foreach\x in {0,...,4}{\vertex{(\x,0)}{}}
        \draw (0,0) -- (4,0);}
        \GreenRobot{(0,0)}{G1}
        \BlueRobot{(1,0)}{Y1}
        \GreenRobot{(2,0)}{G2}
        \BlueRobot{(3,0.3)}{Y2}
        \GreenRobot{(4,0)}{G3}
        \RedRobot{(3,0)}{R1}
        \node[yshift=-4mm] at (0,0) {$z$};
        \node[yshift=-4mm] at (3,0) {$x$};
        \node[yshift=-4mm] at (4,0) {$y$};
    \end{tikzpicture}
\caption{Swapping green/diamond and red/square robots}
\label{fig:SwapRG}
\end{figure}
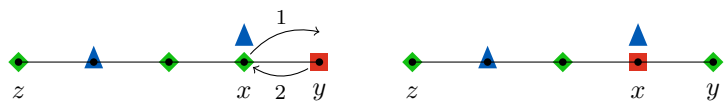

Finally, we can undo the leapfrog as in \cref{fig:ReverseLeapfrog} to arrive at the desired configuration on the right side of \cref{fig:SwapAdjacentEx}.

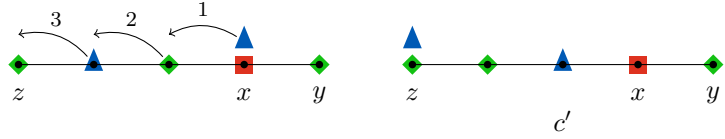
\begin{figure}[h!]
    \centering
    \begin{tikzpicture}
        \fg{\foreach\x in {0,...,4}{\vertex{(\x,0)}{}}
        \draw (0,0) -- (4,0);}
        \GreenRobot{(0,0)}{G1}
        \BlueRobot{(1,0)}{Y1}
        \GreenRobot{(2,0)}{G2}
        \BlueRobot{(3,0.3)}{Y2}
        \GreenRobot{(4,0)}{G3}
        \RedRobot{(3,0)}{R1}
        \node[yshift=-4mm] at (0,0) {$z$};
        \node[yshift=-4mm] at (3,0) {$x$};
        \node[yshift=-4mm] at (4,0) {$y$};
        \draw [bend right, ->] (Y2) to node[pos=0.5,above] {\footnotesize1}  ($(G2)+(0,0.4)$);
        \draw [bend right, ->] (G2) to node[pos=0.5,above] {\footnotesize2}  ($(Y1)+(0,0.4)$);
        \draw [bend right, ->] (Y1) to node[pos=0.5,above] {\footnotesize3}  ($(G1)+(0,0.4)$);
        \node at (2,-0.75) {$\phantom{c'}$};
    \end{tikzpicture}
    \quaad
    \begin{tikzpicture}
        \fg{\foreach\x in {0,...,4}{\vertex{(\x,0)}{}}
        \draw (0,0) -- (4,0);}
        \GreenRobot{(0,0)}{G1}
        \BlueRobot{(0,0.3)}{Y1}
        \GreenRobot{(1,0)}{G2}
        \BlueRobot{(2,0)}{Y2}
        \GreenRobot{(4,0)}{G3}
        \RedRobot{(3,0)}{R1}
        \node[yshift=-4mm] at (0,0) {$z$};
        \node[yshift=-4mm] at (3,0) {$x$};
        \node[yshift=-4mm] at (4,0) {$y$};
        \node at (2,-0.75) {$c'$};
    \end{tikzpicture}
\caption{Undoing the leapfrog}
\label{fig:ReverseLeapfrog}
\end{figure}
\end{ex}

\cref{ex:movingrobots} illustrates the first major step in proving \cref{thm:SrGConnected}: we will show that we can swap any two robots with colors $i\ne j$ on vertices $x\ne y$ (see \cref{cor:SwapColors}). If $x$ and $y$ are both unavailable (as in \cref{ex:movingrobots}), we need to first borrow some $k$-colored robot with $k\notin\{i,j\}$. If there is a $k$-colored robot on some available vertex $z,$ we can leapfrog a robot from $z$ to add a $k$-colored robot to $x$, as in \cref{ex:movingrobots}. \cref{lemma:leapfrog2} shows that this leapfrogging process is possible. If all available vertices contain only $i$- and $j$-colored robots, we will first swap a $k$-colored robot on some unavailable vertex $w$ with an $i$-colored robot on some available vertex $z$ (see \cref{lemma:SwapThird}), then leapfrog a robot from $z$ to add a $k$-colored robot to $x$. 

\textbf{Convention for figures:} \textit{For the remainder of the figures in this section, we will represent color $i$ as green/diamond, color $j$ as red/square, and color $k$ as blue/triangle.}

We start by making the notion of leapfrogging precise. Here and in the remainder of the paper, by a \textit{path in $G$} (not to be confused with a path in $\Sr(G)$), we mean a sequence of vertices in $G$, where there is an edge of $G$ between any two consecutive vertices in the sequence. 

\begin{defn}\label[defn]{defn:Leapfrog}
    Let $x$ and $z$ be vertices of $G$, let $P$ be a path in $G$ from $z$ to $x,$ let $c$ be a 0-cell of $\Sr(G),$ and let $k$ be any color. We say that we can \textit{leapfrog a robot from $z$ along $P$ to add a $k$-colored robot to $x$} if there is a path in $\Sr(G)$ from $c$ to a 0-cell $c'$ such that:
    \begin{itemize}
    \item $c'(x)=c(x)\cup\{k\}$,
    \item $|c'(z)|=|c(z)|-1$,
    \item $|c'(v)|=|c(v)|$ for all vertices $v\in P\setminus\{x,z\}$, and
    \item $c'(v)=c(v)$ for all vertices $v\notin P.$
\end{itemize}
\end{defn}

In other words, we can add a $k$-colored robot to $x$ by removing a robot from $z,$ without changing the number of robots on the other vertices of $P$, and without affecting any vertices outside of $P$. For example, in Figure \ref{fig:Leapfrog}, we leapfrogged a robot from $z$ along the path from $z$ to $x$ to add a blue/triangle robot to $x.$

\begin{rmk}
    Consider a case in which there is an available vertex $z$, a path $P$ in $G$ from $z$ to some vertex $x,$ and a color $k$ such that $k\in c(z)$, but $k\notin c(v)$ for any $v$ on $P\setminus\{z\}$. It is clear that in this case, we can leapfrog a robot from $z$ along $P$ to add a $k$-colored robot to $x.$ See \cref{fig:SimpleLeapfrog}.
\end{rmk}

\begin{figure}[h!]
    \centering
    \begin{tikzpicture}
        \fg{\foreach\x in {0,...,3}{\vertex{(\x,0)}{}}
        \draw (0,0) -- (3,0);}
        \node[below, yshift=-2mm] at (0,0) {$z$};
        \node[below, yshift=-2mm] at (3,0) {$x$};
        \GreenRobot{(0,0)}{G1}
        \BlueRobot{(0,0.3)}{Y1}
        \GreenRobot{(1,0)}{G2}
        \GreenRobot{(2,0)}{G2}
        \RedRobot{(3,0.)}{R1}
        \draw[bend left,->] (Y1) to node[pos=0.5,above]{\footnotesize1} (1,.3);
        \draw[bend left,->] (1.1,.3) to node[pos=0.5,above]{\footnotesize2} (2,.3);
        \draw[bend left,->] (2.1,.3) to node[pos=0.5,above]{\footnotesize3} (3,.3);
        \node at (1.5,-.5) {$c \phantom{c'}$};
    \end{tikzpicture}
    \quaad
    \begin{tikzpicture}
        \fg{\foreach\x in {0,...,3}{\vertex{(\x,0)}{}}
        \draw (0,0) -- (3,0);}
        \node[below, yshift=-2mm] at (0,0) {$z$};
        \node[below, yshift=-2mm] at (3,0) {$x$};
        \GreenRobot{(0,0)}{G1}
        \BlueRobot{(3,0.3)}{Y1}
        \GreenRobot{(1,0)}{G2}
        \GreenRobot{(2,0)}{G2}
        \RedRobot{(3,0.)}{R1}
        \node at (1.5,-.5) {$c'$};
    \end{tikzpicture}
    \caption{Leapfrogging a robot from $z$ to add a blue/triangle robot to $x$}
    \label{fig:SimpleLeapfrog}
\end{figure}
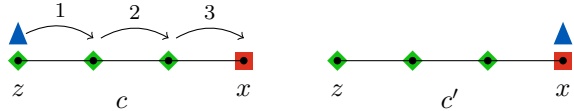

\cref{lemma:leapfrog2} shows that we can replace the condition that $k\notin c(v)$ for all $v$ on $P\setminus\{z\}$ with the weaker condition that $k\notin c(x).$

\begin{lemma}\label[lemma]{lemma:leapfrog2}
Let $c$ be a 0-cell of $\Sr(G)$ such that there are vertices $x$ and $z$ and a color $k,$ where $z$ is available, $k\in c(z),$ and $k\notin c(x).$ For any path $P$ in $G$ from $z$ to $x,$ we can leapfrog a robot from $z$ along $P$ to add a $k$-colored robot to $x.$ 
\end{lemma}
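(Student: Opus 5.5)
Let $P=(z=v_0,v_1,\dots,v_t=x)$. I will argue by induction on the length $t$ of $P$, and within the inductive step reduce to the simple situation of the Remark preceding \cref{fig:SimpleLeapfrog}. It is harmless to assume $P$ has no repeated vertices. If $P$ revisits a vertex, I can shorten it to a simple path $P'\subseteq P$ from $z$ to $x$. Leapfrogging along $P'$ satisfies \cref{defn:Leapfrog} for $P$ as well, since every vertex of $P$ not on $P'$ is untouched. So assume $P$ is simple.

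\textbf{Base case $t=1$.} Here $z$ and $x$ are adjacent. Since $z$ is available and $k\in c(z)$, removing the $k$-colored robot from $z$ still leaves a robot on $z$. Since $k\notin c(x)$, the $k$-colored robot can slide along the edge $zx$ and come to rest on $x$. Throughout the move, condition (c) holds for color $k$: no other $k$-colored robot is on $x$, and none is on $z$ because $c_k$ is a set. Condition (b) also holds throughout, because $z$ keeps another robot.

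\textbf{Inductive step.} Let $j$ be the largest index in $\{0,\dots,t-1\}$ with $k\in c(v_j)$; it exists because $k\in c(v_0)$.

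\emph{Case $j=0$.} No vertex of $P\setminus\{z\}$ carries color $k$. The Remark applies directly: move the $k$-robot from $z$ edge by edge along $P$ to $x$. At each intermediate step the robot sits on a vertex not in $c_k$ and moves across an edge whose far endpoint is not in $c_k$. Condition (c) is preserved because every other $k$-robot lies on a vertex outside $P\setminus\{z\}$, and a robot on the interior of $v_sv_{s+1}$ conflicts only with $k$-robots at $v_s$ or $v_{s+1}$, since all robots of $c$ are on vertices.

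\emph{Case $j>0$.} First apply the induction hypothesis to the subpath $z=v_0,\dots,v_j$ with some color. This is the delicate point, since the hypothesis needs a color present at $z$ and absent at $v_j$. Rather than that, I would do the following.

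\emph{Step 1.} Use the base-case move to slide the $k$-robot at $v_j$ along the final segment $v_j,\dots,v_t$ to $x$. This is legal because no vertex strictly after $v_j$ carries $k$; it is exactly the $j=0$ argument applied to the subpath. It leaves $v_j$ possibly empty.

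\emph{Step 2.} Refill $v_j$. Color $k$ has now left $v_j$, and $k\in c(z)$ with $z$ available. So apply the induction hypothesis to the shorter path $v_0,\dots,v_j$ (length $j<t$) to add a $k$-colored robot to $v_j$, removing one robot from $z$.

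The order matters: performing Step 1 first would violate condition (b) at $v_j$ if $|c(v_j)|=1$. To fix this, reverse the order. Do Step 2 in the form of the induction hypothesis applied to the path $v_0,\dots,v_{j-1},v_j$ with target vertex $v_j$, but only after first making $k\notin c(v_j)$. That is circular, so instead use the following cleaner construction. Apply the inductive hypothesis to the path $v_0,\dots,v_{j-1}$ extended to $v_j$ with the color $k$ replaced by a color $k'\in c(v_j)$ bookkeeping trick. I expect this ordering and invariant maintenance to be the main obstacle.

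My concrete fix is to strengthen the induction statement. Claim that for a simple path the $k$-robot arriving at $x$ may be a different physical robot, and that the number of robots on each interior vertex is preserved while the color sets may change. This matches \cref{defn:Leapfrog}, which only fixes $|c'(v)|$ on interior vertices. The argument then runs as follows. By induction on the shorter path $v_0,\dots,v_j$, target vertex $v_j$, and a color $k''\in c(z)\setminus c(v_j)$ if one exists, the vertex $v_j$ becomes available. If no such color exists, then $c(z)\subseteq c(v_j)$, and I would choose instead to leapfrog toward $v_{j-1}$ and recurse. Once $v_j$ is available, slide its $k$-robot to $x$ as in the $j=0$ case.

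I would verify at each single-edge move that conditions (b) and (c) hold, and that colors change only at $z$, $x$, and interior vertices of $P$, with counts preserved on interior vertices. The genuinely hard part is guaranteeing that $v_j$ becomes available when $c(z)\subseteq c(v_j)$. That is the step where I expect the most care, possibly requiring the at-least-three-colors hypothesis.
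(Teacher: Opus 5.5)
Your case $j>0$ has a genuine gap. You never settle on a valid ordering of moves, and you explicitly leave open ``guaranteeing that $v_j$ becomes available when $c(z)\subseteq c(v_j)$.'' For that case you only offer an unspecified recursion toward $v_{j-1}$ and a guess that the three-colors hypothesis is needed. As written, the inductive step is a sequence of tentative orderings, one of which you call circular, rather than an argument.

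The missing idea is to split on whether $v_j$ is available, not on whether $c(z)\setminus c(v_j)$ is nonempty.

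\emph{If $v_j$ is available}, your original order (Step 1, then Step 2) is already legal. Sliding the $k$-robot from $v_j$ to $x$ leaves a robot on $v_j$. Afterwards $k\notin c(v_j)$, and $z$ is untouched, so it is still available with $k\in c(z)$. The induction hypothesis on $v_0,\dots,v_j$ with color $k$ then refills $v_j$ and restores its count.

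\emph{If $v_j$ is not available}, then $c(v_j)=\{k\}$. Since $z$ is available, there is some $i\in c(z)$ with $i\neq k$, hence $i\notin c(v_j)$. The induction hypothesis on $v_0,\dots,v_j$ with color $i$ adds an $i$-colored robot to $v_j$. This keeps $k$ on $v_j$ and leaves $v_{j+1},\dots,v_t$ untouched, so $v_j$ is now available. Then slide the $k$-robot from $v_j$ to $x$ as in your $j=0$ case.

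In particular, the case you feared, $c(z)\subseteq c(v_j)$, forces $|c(v_j)|\ge|c(z)|\ge 2$. So $v_j$ is already available, and nothing needs to be done to make it available. No hypothesis on the number of colors is required. This two-case split is exactly how the paper's proof proceeds, with its $\tilde z$ playing the role of your $v_j$.
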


\begin{proof}
    To describe the path in $\Sr(G)$ from $c$ to the cell $c'$ described in \cref{defn:Leapfrog}, we will use induction on the number, $s$, of vertices on $P$. For the base case, $s=2$ (so that $P$ only contains $z$ and $x$, i.e. $z$ and $x$ are adjacent), we can move the $k$-colored robot from $z$ onto $x$ to obtain a cell $c'$ as claimed.

    For the inductive step, assume the claim is true for paths with at most $s$ vertices, and consider the case in which $P$ has $s+1$ vertices. Let $\tilde z$ be the unique vertex on $P$ such that $k\in c(\tilde z)$ and the path $P_1$ from $\tilde z$ to $x$ along $P$ contains no other $k$-colored robots. We consider two cases:

    \textbf{Case 1: $\boldsymbol{\tilde z}$ is available} (including the case in which $\tilde z=z).$ Since there are no $k$-colored robots between $\tilde z$ and $x,$ we can leapfrog a robot from $\tilde z$ along $P_1$ to add a $k$-colored robot to $x,$ as noted in the remark following \cref{defn:Leapfrog}. If $\tilde z=z,$ this gives the desired cell $c'$. Otherwise, the path $P_2$ from $z$ to $\tilde z$ has at most $s$ vertices, so we can use the inductive hypothesis to leapfrog a robot from $z$ along $P_2$ to add a $k$-colored robot to $\tilde z$ to obtain the desired cell $c'.$ See \cref{fig:leapfrog2case1}.

    \begin{figure}[h!]
        \centering
        \begin{tikzpicture}
            \fg{
            \vertex{(0,0)}{}
            \vertex{(2,0)}{}
            \vertex{(4,0)}{}
            \draw[dashed](0,0) to node[pos=0.5,below]{$P_2$} (2,0);
            \draw[dashed](2,0) to node[pos=0.5,below]{$P_1$} (4,0);
            }
            \node[below, yshift=-2mm] at (0,0) {$z$};
            \node[below, yshift=-2mm] at (2,0) {$\tilde z$};
            \node[below, yshift=-2mm] at (4,0) {$x$};
            \GreenRobot{(0,0)}{g1}
            \BlueRobot{(0,0.3)}{y1}
            \BlueRobot{(2,0.3)}{y2}
            \GreenRobot{(4,0)}{g2}
            \RedRobot{(2,0)}{r1}
            \draw[bend left, ->] (y2) to node[pos=0.5,above]{\footnotesize 1} ($(g2)+(0,.3)$);
            \draw[bend left, ->] (y1) to node[pos=0.5,above]{\footnotesize 2} (y2);
            \node at (2,-1) {$c$};
        \end{tikzpicture}
        \quad
        \begin{tikzpicture}
            \fg{
            \vertex{(0,0)}{}
            \vertex{(2,0)}{}
            \vertex{(4,0)}{}
            \draw[dashed](0,0) to node[pos=0.5,below]{$P_2$} (2,0);
            \draw[dashed](2,0) to node[pos=0.5,below]{$P_1$} (4,0);
            }
            \node[below, yshift=-2mm] at (0,0) {$z$};
            \node[below, yshift=-2mm] at (2,0) {$\tilde z$};
            \node[below, yshift=-2mm] at (4,0) {$x$};
            \GreenRobot{(0,0)}{g1}
            \BlueRobot{(4,0.3)}{y1}
            \BlueRobot{(2,0.3)}{y2}
            \GreenRobot{(4,0)}{g2}
            \RedRobot{(2,0)}{r1}
            \node at (2,-1) {$c'$};
        \end{tikzpicture}
        \caption{Case 1 of \cref{lemma:leapfrog2}}
        \label{fig:leapfrog2case1}
    \end{figure}
    
    \textbf{Case 2:  $\boldsymbol{\tilde z}$ is not available} (so $\tilde z\ne z$). Again let $P_2$ be the path from $z$ to $\tilde z$. Since $z$ is available, there is some $i\ne k$ with $i\in c(z).$  Since $\tilde z$ is not available and $k\in c(\tilde z)$, we have $i\notin c(\tilde z),$ so we can use the inductive hypothesis to leapfrog a robot from $z$ along $P_2$ to add an $i$-colored robot to $\tilde z,$ then leapfrog a robot from $\tilde z$ along $P_1$ to add a $k$-colored robot to $x.$ This gives the desired cell $c'.$ See \cref{fig:leapfrog2case2}.
    
    \begin{figure}[h!]
        \centering
        \begin{tikzpicture}
            \fg{
            \vertex{(0,0)}{}
            \vertex{(2,0)}{}
            \vertex{(4,0)}{}
            \draw[dashed](0,0) to node[pos=0.5,below]{$P_2$} (2,0);
            \draw[dashed](2,0) to node[pos=0.5,below]{$P_1$} (4,0);
            }
            \node[below, yshift=-2mm] at (0,0) {$z$};
            \node[below, yshift=-2mm] at (2,0) {$\tilde z$};
            \node[below, yshift=-2mm] at (4,0) {$x$};
            \GreenRobot{(0,0)}{g1}
            \BlueRobot{(0,0.3)}{y1}
            \BlueRobot{(2,0)}{y2}
            \GreenRobot{(4,0)}{g2}
            \draw[bend left, ->] (y2) to node[pos=0.5,above]{\footnotesize 2} ($(g2)+(0,.3)$);
            \draw[bend left, ->] (g1) to node[pos=0.5,above]{\footnotesize 1} ($(y2)+(0,.3)$);
            \node at (2,-1) {$c$};
        \end{tikzpicture}
        \quad
        \begin{tikzpicture}
            \fg{
            \vertex{(0,0)}{}
            \vertex{(2,0)}{}
            \vertex{(4,0)}{}
            \draw[dashed](0,0) to node[pos=0.5,below]{$P_2$} (2,0);
            \draw[dashed](2,0) to node[pos=0.5,below]{$P_1$} (4,0);
            }
            \node[below, yshift=-2mm] at (0,0) {$z$};
            \node[below, yshift=-2mm] at (2,0) {$\tilde z$};
            \node[below, yshift=-2mm] at (4,0) {$x$};
            \GreenRobot{(2,0)}{g1}
            \BlueRobot{(0,0)}{y1}
            \BlueRobot{(4,0.3)}{y2}
            \GreenRobot{(4,0)}{g2}
            \node at (2,-1) {$c'$};
        \end{tikzpicture}
        \caption{Case 2 of \cref{lemma:leapfrog2}}
        \label{fig:leapfrog2case2}
    \end{figure}
\end{proof}

\begin{rmk}
    Since the path in $\Sr(G)$ described in the proof of \cref{lemma:leapfrog2} doesn't depend on the color of any of the robots on $x$ (other than the fact that there are no $k$-colored robots on $x$), it is clear that if even we change the color of any of the non-$k$-colored robots on $x$ in $c',$ we can reverse the path in $\Sr(G)$ to return all of the robots on $P$ to their original positions in $c$, other than the one whose color was altered on $x$. We refer to this as \textit{undoing the leapfrog} (as in \cref{fig:ReverseLeapfrog}).
\end{rmk}

Next, we make the notion of swapping robots precise.

\begin{defn}\label[defn]{defn:SwapColors}
    Let $x$ and $y$ be any vertices of $G$, and let $c$ be a 0-cell of $\Sr(G)$ such that there are colors $i,j$ with $i\in c(x)$ and $j\in c(y)$. We say that we can \textit{swap the $i$-colored robot on $x$ with the $j$-colored robot on $y$} if there is a path in $\Sr(G)$ from $c$ to a 0-cell $c'$ such that:
    \begin{itemize}
        \item $c'(x)=(c(x)\setminus\{i\})\cup\{j\}$,
        \item $c'(y)=(c(y)\setminus\{j\})\cup\{i\}$, and
        \item $c'(v)=c(v)$ for all $v\in V(G)\setminus\{x,y\}$.
    \end{itemize}
\end{defn}

As mentioned above, in \cref{cor:SwapColors} we show that we can swap any two robots of different colors $i\ne j$ on vertices $x\ne y$. There are several cases in the proof of \cref{cor:SwapColors}, the most difficult of which is when $x$ and $y$ are both unavailable, and all available vertices only contain $i$- and $j$-colors robots. In this case, we will first swap a $k$-colored robot (for some $k\notin \{i,j\}$) on some non-available vertex $w$ with an $i$-colored robot on some available vertex $z.$ \cref{lemma:SwapThird} shows that such a swap is always possible.

\begin{lemma}\label[lemma]{lemma:SwapThird}
    Let $c$ be a 0-cell of $\Sr(G)$ such that there are vertices $z$ and $w,$ a path $P$ in $G$ from $z$ to $w$, and colors $i\ne k$ such that $c(w)=\{k\},$ $z$ is available and $i\in c(z)$, and $k\notin c(v)$ for all $v\in P\setminus\{w\}$. Then, we can swap the $i$-colored robot on $z$ with the $k$-colored robot on $w.$
\end{lemma}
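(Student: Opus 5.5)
The plan is induction on the number $s$ of vertices of $P$, mirroring the proof of \cref{lemma:leapfrog2}. Write $P=(z=v_0,v_1,\dots,v_{s-1}=w)$ and $u=v_{s-2}$.

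\textbf{Base case ($s=2$, so $z$ and $w$ are adjacent).} We swap with two edge moves.
\begin{enumerate}[(1)]
\item Move the $i$-colored robot from $z$ along the edge $zw$ onto $w$. This is allowed because $i\notin c(w)=\{k\}$. Since we are in a $0$-cell, no other $i$-colored robot lies on $z$, on $w$, or on an edge interior, so condition (c) is never violated. Since $z$ is available, $z$ stays covered throughout.
\item Now $c(w)=\{i,k\}$, so $w$ is available. Move the $k$-colored robot from $w$ to $z$, which is allowed because $k\notin c(z)$.
\end{enumerate}
The result is exactly the $0$-cell $c'$ of \cref{defn:SwapColors}.

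\textbf{Inductive step, first half.} Since $i\notin c(w)$ and $z$ is available, \cref{lemma:leapfrog2} lets us leapfrog a robot from $z$ along $P$ to add an $i$-colored robot to $w$. This gives a $0$-cell $c_1$ with $c_1(w)=\{i,k\}$. Inspecting the proof of \cref{lemma:leapfrog2}, every robot it moves has a color already present on $P\setminus\{w\}$ in $c$, so no $k$-colored robot ever appears on $P\setminus\{w\}$. Moreover, by the base case of that proof, the final move is a single $i$-colored robot moving from $u$ to $w$. Next, move the $k$-colored robot from $w$ to $u$; this is allowed because $w$ is available in $c_1$ and $k\notin c_1(u)$.

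\textbf{Inductive step, second half: undoing the leapfrog with relabelled colors.} We then run the leapfrog path backwards, with the $k$-colored robot playing the role of the $i$-colored robot that was leapfrogged onto $w$. Concretely:
\begin{itemize}
\item The reversed last move ``$i$ goes from $w$ back to $u$'' has already been realized by the $k$-robot's move.
\item Every subsequent reversed move that transports that particular robot is performed by the $k$-robot instead.
\item All other reversed moves are performed unchanged.
\end{itemize}
This is legal for two reasons. First, condition (b) depends only on the number of robots on each vertex, and relabelling does not change these numbers. Second, condition (c) for color $k$ cannot fail, because no other $k$-colored robot lies on $P\setminus\{w\}$ at any time and $w$ is never re-entered. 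At the end, every vertex of $P\setminus\{z,w\}$ has returned to its original colors. The robot that originally left $z$ in the leapfrog returns to $z$ as a $k$-colored robot rather than an $i$-colored one, and $w$ holds only the $i$-robot. This is exactly the required swap.

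\textbf{Main obstacle.} The hard part is justifying this relabelled undo. The remark following \cref{lemma:leapfrog2} only allows recoloring robots on the endpoint $x$, whereas here the substituted robot travels back along all of $P$. Moreover, in Case 2 of that proof, the robot that ``arrives'' is a relay of different robots, not a single one.

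The first thing to try is to make this precise by strengthening the induction hypothesis of \cref{lemma:leapfrog2}. The strengthened statement would be: for each leapfrog there is a tracked chain of moves such that, if the robot delivered to $x$ is replaced by a robot of any color absent from $P\setminus\{x\}$, the reversed chain is still valid and deposits that color on the vertex from which the chain began. This would be proved by induction on the two cases of that proof. In Case 1 we apply the hypothesis to $P_1$ and $P_2$. In Case 2 we observe that the relayed robot at $\tilde z$ is itself subject to the same substitution.

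If this bookkeeping becomes too heavy, the fallback is a direct induction on $s$ for the present lemma, splitting on whether $u$ is available:
\begin{itemize}
\item If $u$ is available, use the base case on the edge $uw$ with a color $j\in c(u)$, then apply the inductive hypothesis along $z\cdots u$.
\item If $u$ is unavailable, first leapfrog a robot onto $u$ via \cref{lemma:leapfrog2} and reduce to the previous case.
\end{itemize}
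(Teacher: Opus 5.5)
Your base case is correct, but the inductive step has a genuine gap. The ``relabelled undo'' only works when the robot delivered to $w$ is the same robot that left $z$. In Case 2 of the proof of \cref{lemma:leapfrog2} the leapfrog is a relay, and then relabelling does not produce the swap.

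Concretely, take $P=(z,\tilde z,w)$ with $c(z)=\{i,j\}$, $c(\tilde z)=\{i\}$, $c(w)=\{k\}$.
\begin{enumerate}
\item The leapfrog moves $j$ from $z$ to $\tilde z$, then $i$ from $\tilde z$ to $w$.
\item You then move $k$ from $w$ to $\tilde z$. This leaves $z=\{i\}$, $\tilde z=\{j,k\}$, $w=\{i\}$.
\item Reversing the remaining move either sends $j$ back, giving $z=\{i,j\}$, $\tilde z=\{k\}$. Or, with the substitution propagated as you suggest, it sends $k$ back, giving $z=\{i,k\}$, $\tilde z=\{j\}$.
\end{enumerate}
The required $c'$ has $z=\{j,k\}$, $\tilde z=\{i\}$. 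So your claim that every vertex of $P\setminus\{z,w\}$ returns to its original colors is false. Your proposed strengthening of \cref{lemma:leapfrog2} would at best put $k$ on $z$ in place of whichever robot left $z$, and here that is $j$, not $i$. (Also, the delivery onto $w$ is not always the last move of that leapfrog: in Case 1 the refill along $P_2$ comes after it. That point is harmless, since moving $k$ from $w$ to $u$ is legal at the end anyway.)

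The fallback does not close the gap either. If $u$ is available, then after the base-case swap on $uw$ the vertex $u$ still carries at least two robots. So $c(u)\neq\{k\}$, and the inductive hypothesis, which requires the target vertex to carry only $k$, cannot be applied along $z\cdots u$. You would need a more general statement that you have not formulated. If $u$ is unavailable, leapfrogging onto $u$ removes a robot from $z$. That can make $z$ unavailable or remove its $i$-robot, destroying the hypotheses.

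The missing idea is to induct from the $z$ end, so that the target $w$ is never touched and $c(w)=\{k\}$ persists. Let $\tilde z$ be the neighbor of $z$ on $P$.
\begin{enumerate}
\item Make $\tilde z$ available and carrying $i$ with at most one move out of $z$. Move the $i$-robot if $i\notin c(\tilde z)$; move some $j\neq i$ if $c(\tilde z)=\{i\}$; do nothing if $\tilde z$ is already available with $i$.
\item Apply the inductive hypothesis along $\tilde z\cdots w$.
\item Repair $z$ and $\tilde z$ with adjacent moves. For example, when $c(\tilde z)=\{i\}$: move $k$ from $\tilde z$ to $z$, then $i$ from $z$ to $\tilde z$, then $j$ from $\tilde z$ to $z$.
\end{enumerate}
That final local exchange is exactly what your relay example is missing, and it is how the paper's proof goes.
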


\begin{proof}
    To describe a path from $c$ to the desired cell $c',$ we will use induction on the number, $s$, of vertices on $P$. For the base case, $s=2$, $P$ only contains $z$ and $w$. Since $w$ only contains a $k$-colored robot, and $i\ne k$, we can move the $i$-colored robot from $z$ onto $w$. Then, since $z$ does not have a $k$-colored robot, we can move the $k$-colored robot from $w$ to $z$ to obtain $c'$. See \cref{fig:SwapThirdBaseCase}.

    \begin{figure}[h!]
        \centering
        \begin{tikzpicture}
            \fg{
            \vertex{(0,0)}{}
            \vertex{(1,0)}{}
            \draw (0,0)--(1,0); 
            }
            \RedRobot{(0,0)}{r1}
            \GreenRobot{(0,0.3)}{g1}
            \BlueRobot{(1,0)}{y1}
            \draw[bend left, ->](g1) to node[pos=0.5, above]{\footnotesize1} (y1);
            \draw[bend left, ->](y1) to node[pos=0.5, below]{\footnotesize2}(g1);
            \node[below, yshift=-2mm] at (0,0){$z$};
            \node[below, yshift=-2mm] at (1,0){$w$};
            \node at (0.5, -.75){$c\phantom{c'}$};
        \end{tikzpicture}
        \quaad 
        \begin{tikzpicture}
            \fg{
            \vertex{(0,0)}{}
            \vertex{(1,0)}{}
            \draw (0,0)--(1,0); 
            } 
            \RedRobot{(0,0)}{r1}
            \BlueRobot{(0,0.3)}{y1}
            \GreenRobot{(1,0)}{g1}
            \node[below, yshift=-2mm] at (0,0){$z$};
            \node[below, yshift=-2mm] at (1,0){$w$};
            \node at (0.5, -.75){$c'$};
        \end{tikzpicture}
        \caption{The base case for \cref{lemma:SwapThird}}
        \label{fig:SwapThirdBaseCase}
    \end{figure}
    
    For the inductive step, assume the claim is true for paths of length $s$ and consider a case where the length of the path is $s + 1$. Let $\tilde z$ be the vertex on $P$ that is adjacent to $z$, and let $\tilde P$ be the path from $\tilde z$ to $w,$ so that $\tilde P$ has length $s$. We consider three different cases.

    \textbf{Case 1: $\boldsymbol{i\notin c(\tilde z)}$.} In this case, we can move the $i$-colored robot from $z$ onto $\tilde z$. This gives a cell $\tilde c$ for which the path $\tilde{P}$ satisfies the inductive hypothesis. Thus, by induction, we can swap the $i$-colored robot on $\tilde z$ with the $k$-colored robot on $w$. Then, we can move the $k$-colored robot from $\tilde z$ onto $z$ to obtain $c'$. See \cref{fig:SwapThird1}.
    
    \begin{figure}[h!]
        \centering
        \begin{tikzpicture}
            \fg{
            \vertex{(0,0)}{}
            \vertex{(1,0)}{}
            \vertex{(3,0)}{}
            \draw (0,0) -- (1,0);
            \draw[dashed] (1,0) to node[above]{$\tilde P$} (3,0);
            }
            \RedRobot{(0,0)}{r1}
            \GreenRobot{(0,.3)}{g1}
            \RedRobot{(1,0)}{r2}
            \BlueRobot{(3,0)}{y1}
            \node[below, yshift=-2mm] at (0,0) {$z$};
            \node[below, yshift=-2mm] at (1,0) {$\tilde z$};
            \node[below, yshift=-2mm] at (3,0) {$w$};
            \draw[bend left, ->] (g1) to ($(r2)+(0,0.3)$);
            \node at (1.5,-.75) {$c$};
        \end{tikzpicture}
        \quad
        \begin{tikzpicture}
           \fg{
            \vertex{(0,0)}{}
            \vertex{(1,0)}{}
            \vertex{(3,0)}{}
            \draw (0,0) -- (1,0);
            \draw[dashed] (1,0) to node[above]{$\tilde P$} (3,0);
            }
            \RedRobot{(0,0)}{r1}
            \GreenRobot{(1,.3)}{g1}
            \RedRobot{(1,0)}{r2}
            \BlueRobot{(3,0)}{y1}
            \node[below, yshift=-2mm] at (0,0) {$z$};
            \node[below, yshift=-2mm] at (1,0) {$\tilde z$};
            \node[below, yshift=-2mm] at (3,0) {$w$};
            \draw[out=45, ->] (g1) to  (y1);
            \draw[bend left, ->] (y1) to (g1);
            \node at (1.5,-.75) {$\tilde c$};
        \end{tikzpicture}
        \quad
        \begin{tikzpicture}
            \fg{
            \vertex{(0,0)}{}
            \vertex{(1,0)}{}
            \vertex{(3,0)}{}
            \draw (0,0) -- (1,0);
            \draw[dashed] (1,0) to node[above]{$\tilde P$} (3,0);
            }
            \RedRobot{(0,0)}{r1}
            \GreenRobot{(3,0)}{g1}
            \RedRobot{(1,0)}{r2}
            \BlueRobot{(1,0.3)}{y1}
            \node[below, yshift=-2mm] at (0,0) {$z$};
            \node[below, yshift=-2mm] at (1,0) {$\tilde z$};
            \node[below, yshift=-2mm] at (3,0) {$w$};
            \draw[bend right, ->] (y1) to ($(r1)+(0,.3)$);
            \node at (1.5,-.75) {$\phantom c$};
        \end{tikzpicture}
        \quad
        \begin{tikzpicture}
            \fg{
            \vertex{(0,0)}{}
            \vertex{(1,0)}{}
            \vertex{(3,0)}{}
            \draw (0,0) -- (1,0);
            \draw[dashed] (1,0) to node[above]{$\tilde P$} (3,0);
            }
            \RedRobot{(0,0)}{r1}
            \GreenRobot{(3,0)}{g1}
            \RedRobot{(1,0)}{r2}
            \BlueRobot{(0,0.3)}{y1}
            \node[below, yshift=-2mm] at (0,0) {$z$};
            \node[below, yshift=-2mm] at (1,0) {$\tilde z$};
            \node[below, yshift=-2mm] at (3,0) {$w$};
            \node at (1.5,-.75) {$c'$};
        \end{tikzpicture}
        \caption{Case 1 of \cref{lemma:SwapThird}}
        \label{fig:SwapThird1}
    \end{figure}

    \textbf{Case 2: $\boldsymbol{i\in c(\tilde z)}$ and $\boldsymbol{\tilde z}$ is not available.} We can move any $j$-colored robot from $z$, with $j\ne i$, onto $\tilde z$. By the inductive hypothesis, we can swap the $i$-colored robot on $\tilde z$ with the $k$-colored robot on $w$. Then, we can move the $k$-colored robot from $\tilde z$ onto $z,$ then move the $i$-colored robot from $z$ onto $\tilde z$, then finally move the $j$-colored robot from $\tilde z$ to $z$ to obtain $c'$. See \cref{fig:SwapThird2}.

        \begin{figure}[h!]
        \centering
        \begin{tikzpicture}
            \fg{
            \vertex{(0,0)}{}
            \vertex{(1,0)}{}
            \vertex{(3,0)}{}
            \draw (0,0) -- (1,0);
            \draw[dashed] (1,0) to node[above]{$\tilde P$} (3,0);
            }
            \RedRobot{(0,0)}{r1}
            \GreenRobot{(0,.3)}{g1}
            \GreenRobot{(1,0)}{g2}
            \BlueRobot{(3,0)}{y1}
            \node[below, yshift=-2mm] at (0,0) {$z$};
            \node[below, yshift=-2mm] at (1,0) {$\tilde z$};
            \node[below, yshift=-2mm] at (3,0) {$w$};
            \draw[bend left, ->] (r1) to ($(g2)+(0,0.3)$);
            \node at (1.5,-.75) {$c$};
        \end{tikzpicture}
        \quad
        \begin{tikzpicture}
            \fg{
            \vertex{(0,0)}{}
            \vertex{(1,0)}{}
            \vertex{(3,0)}{}
            \draw (0,0) -- (1,0);
            \draw[dashed] (1,0) to node[above]{$\tilde P$} (3,0);
            }
            \GreenRobot{(0,0)}{g1}
            \GreenRobot{(1,0)}{g2}
            \RedRobot{(1,.3)}{r2}
            \BlueRobot{(3,0)}{y1}
            \node[below, yshift=-2mm] at (0,0) {$z$};
            \node[below, yshift=-2mm] at (1,0) {$\tilde z$};
            \node[below, yshift=-2mm] at (3,0) {$w$};
            \draw[out=60, in=120,->] (g2) to (y1);
            \draw[bend left, ->] (y1) to (g2);
            \node at (1.5,-.75) {$\phantom c$};
        \end{tikzpicture}
        \quad
        \begin{tikzpicture}
            \fg{
            \vertex{(0,0)}{}
            \vertex{(1,0)}{}
            \vertex{(3,0)}{}
            \draw (0,0) -- (1,0);
            \draw[dashed] (1,0) to node[above]{$\tilde P$} (3,0);
            }
            \GreenRobot{(0,0)}{g1}
            \GreenRobot{(3,0)}{g2}
            \RedRobot{(1,0)}{r2}
            \BlueRobot{(1,0.3)}{y1}
            \node[below, yshift=-2mm] at (0,0) {$z$};
            \node[below, yshift=-2mm] at (1,0) {$\tilde z$};
            \node[below, yshift=-2mm] at (3,0) {$w$};
            \draw[out=115, in=90, ->] (y1) to node[pos = .5, above]{\footnotesize1} ($(r1)+(0,.3)$);
            \draw[bend right, ->] (g1) to node[pos=.5, below]{\footnotesize2} (r2);
            \draw[bend right, ->] (r2) to node[pos=.5, above]{\footnotesize3} (g1);
            \node at (1.5,-.75) {$\phantom c$};
        \end{tikzpicture}
        \quad
        \begin{tikzpicture}
            \fg{
            \vertex{(0,0)}{}
            \vertex{(1,0)}{}
            \vertex{(3,0)}{}
            \draw (0,0) -- (1,0);
            \draw[dashed] (1,0) to node[above]{$\tilde P$} (3,0);
            }
            \RedRobot{(0,0)}{r1}
            \GreenRobot{(3,0)}{g1}
            \GreenRobot{(1,0)}{g2}
            \BlueRobot{(0,0.3)}{y1}
            \node[below, yshift=-2mm] at (0,0) {$z$};
            \node[below, yshift=-2mm] at (1,0) {$\tilde z$};
            \node[below, yshift=-2mm] at (3,0) {$w$};
            \node at (1.5,-.75) {$c'$};
        \end{tikzpicture}
        \caption{Case 2 of \cref{lemma:SwapThird}}
        \label{fig:SwapThird2}
    \end{figure}
    
    \textbf{Case 3: $\boldsymbol{i\in c(\tilde z)}$ and $\boldsymbol{\tilde z}$ is available}. By the inductive hypothesis, we can swap the $i$-colored robot on $\tilde z$ with the $k$-colored robot on $w$. Then, we can move the $k$-colored robot from $\tilde z$ onto $z$ and move the $i$-colored robot from $z$ onto $\tilde z$ to obtain $c'$. See \cref{fig:SwapThird3}.

 \begin{figure}[h!]
        \centering
        \begin{tikzpicture}
           \fg{
            \vertex{(0,0)}{}
            \vertex{(1,0)}{}
            \vertex{(3,0)}{}
            \draw (0,0) -- (1,0);
            \draw[dashed] (1,0) to node[above]{$\tilde P$} (3,0);
            }
            \GreenRobot{(0,0.3)}{g1}
            \GreenRobot{(1,.3)}{g1}
            \RedRobot{(1,0)}{r2}
            \RedRobot{(0,0)}{r3}
            \BlueRobot{(3,0)}{y1}
            \node[below, yshift=-2mm] at (0,0) {$z$};
            \node[below, yshift=-2mm] at (1,0) {$\tilde z$};
            \node[below, yshift=-2mm] at (3,0) {$w$};
            \draw[out=45, ->] (g1) to (y1);
            \draw[bend left, ->] (y1) to (g1);
            \node at (1.5,-.75) {$ c$};
        \end{tikzpicture}
        \quaad
        \begin{tikzpicture}
            \fg{
            \vertex{(0,0)}{}
            \vertex{(1,0)}{}
            \vertex{(3,0)}{}
            \draw (0,0) -- (1,0);
            \draw[dashed] (1,0) to node[above]{$\tilde P$} (3,0);
            }
            \GreenRobot{(0,0.3)}{g1}
            \GreenRobot{(3,0)}{g2}
            \RedRobot{(1,0)}{r2}
            \RedRobot{(0,0)}{r1}
            \BlueRobot{(1,0.3)}{y1}
            \node[below, yshift=-2mm] at (0,0) {$z$};
            \node[below, yshift=-2mm] at (1,0) {$\tilde z$};
            \node[below, yshift=-2mm] at (3,0) {$w$};
            \draw[bend right, ->] (y1) to node[pos=.5, above]{\footnotesize1} ($(g1)+(0,.3)$);
            \draw[bend right, ->](g1) to node[pos=.5, above, yshift=-.5mm]{\footnotesize2} (y1);
            \node at (1.5,-.75) {$\phantom c$};
        \end{tikzpicture}
        \quaad
        \begin{tikzpicture}
            \fg{
            \vertex{(0,0)}{}
            \vertex{(1,0)}{}
            \vertex{(3,0)}{}
            \draw (0,0) -- (1,0);
            \draw[dashed] (1,0) to node[above]{$\tilde P$} (3,0);
            }
            \GreenRobot{(3,0)}{g1}
            \GreenRobot{(1,0.3)}{g2}
            \RedRobot{(1,0)}{r2}
            \RedRobot{(0,0)}{r1}
            \BlueRobot{(0,0.3)}{y1}
            \node[below, yshift=-2mm] at (0,0) {$z$};
            \node[below, yshift=-2mm] at (1,0) {$\tilde z$};
            \node[below, yshift=-2mm] at (3,0) {$w$};
            \node at (1.5,-.75) {$c'$};
        \end{tikzpicture}
        \caption{Case 3 of \cref{lemma:SwapThird}}
        \label{fig:SwapThird3}
    \end{figure}
\end{proof}

\begin{cor}\label[cor]{cor:SwapColors}
    Let $G$ be a connected graph, and let $c$ be a 0-cell of $\Sr(G)$ such that there are vertices $x\ne y$ and colors $i\ne j$ where $i\in c(x),$ $j\notin c(x)$, $j\in c(y),$ and $i\notin c(y)$. Then, we can swap the $i$-colored robot on $x$ with the $j$-colored robot on $y.$
\end{cor}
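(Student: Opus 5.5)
The plan is to first handle the case where $x$ and $y$ are adjacent, and then reduce the general case to it by induction on the length of a path from $x$ to $y$ in $G$; this path exists because $G$ is connected.

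\textbf{Adjacent case.} Suppose $x$ and $y$ are adjacent.
\begin{itemize}
\item If $x$ is available, move the $i$-colored robot from $x$ onto $y$. This is allowed because $i\notin c(y)$, and it is legal because $x$ still carries another robot. Now $y$ carries both $i$ and $j$, so we can move the $j$-colored robot from $y$ onto $x$, which is allowed because $j\notin c(x)$. All other vertices are untouched, so this realizes the swap of \cref{defn:SwapColors}.
\item If $y$ is available, the symmetric two moves work.
\item If neither is available, then $c(x)=\{i\}$ and $c(y)=\{j\}$. We borrow a robot of a third color $k\notin\{i,j\}$; it must avoid both $i$ and $j$, since if we added a $j$-colored robot to $x$, the $j$-robot on $y$ could not later move back onto $x$.
\end{itemize}

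\textbf{Borrowing a third color.} Because $\vec r$ is non-trivial, some vertex $z$ is available.
\begin{itemize}
\item If some available $z$ contains a color $k\notin\{i,j\}$, choose a shortest path in $G$ from $z$ to the set $\{x,y\}$. By symmetry we may assume it ends at $x$ and avoids $y$. Apply \cref{lemma:leapfrog2} to add a $k$-colored robot to $x$, leaving $y$ untouched. Then perform the two-move swap of the available case between $x$ and $y$. Finally undo the leapfrog using the remark following \cref{lemma:leapfrog2}; this is valid because the only change on $x$ was to a non-$k$ color.
\item If every available vertex carries only colors $i$ and $j$, use the hypothesis of at least three colors. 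Some $k$-colored robot with $k\notin\{i,j\}$ sits on an unavailable vertex $w$, so $c(w)=\{k\}$, and $w\ne x,y$. Take a path from an available $z$ to the nearest such $w$, so that no earlier vertex on the path contains $k$. \cref{lemma:SwapThird} then swaps an $i$- (or $j$-) colored robot on $z$ with the $k$-robot on $w$. We are then in the previous situation: do the leapfrog, swap, and unleapfrog, and finally apply \cref{lemma:SwapThird} again (or reverse the path) to restore $w$ and $z$.
\end{itemize}

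\textbf{General case.} Let $x=v_0,v_1,\dots,v_t=y$ be a shortest path. I would proceed by induction on $t$, realizing the swap as a composition of swaps along the path. Pick a color $a\in c(v_1)$ and realize the transposition of $i$ and $j$ as a conjugate of swaps between adjacent vertices and a shorter swap. For example: swap $i$ on $x$ with $a$ on $v_1$, then swap $i$ (now on $v_1$) with $j$ on $y$ by induction, then swap $a$ (now on $x$) with $j$ (now on $v_1$). The bookkeeping that must hold at each stage is the hypothesis of the statement: the first color is absent from the second vertex and vice versa.
\begin{itemize}
\item When $v_1$ contains $i$ or $j$ and this bookkeeping fails, I would instead pick a different color on $v_1$.
\item If $c(v_1)\subseteq\{i,j\}$, I would first make $v_1$ carry a suitable third color using the borrowing procedure above, and then undo it at the end.
\end{itemize}

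\textbf{Main obstacle.} The step I expect to be hardest is ensuring that the auxiliary moves do not disturb the robots on $x$ and $y$, so that they can be undone afterwards. These auxiliary moves are the leapfrog and the preliminary application of \cref{lemma:SwapThird}. The difficulty is that the paths used in \cref{lemma:leapfrog2,lemma:SwapThird} might pass through $x$ or $y$. I would try to resolve this by choosing shortest paths to the relevant target sets, as above, and by checking that the intermediate $0$-cells change only the vertices allowed by \cref{defn:Leapfrog}. In particular, \cref{defn:Leapfrog} allows the counts, but not the colors, to be preserved on interior path vertices. This is exactly why the undo remark is needed rather than simply reversing the path.
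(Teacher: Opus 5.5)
Your adjacent case is the paper's base case: direct moves when $x$ or $y$ is available, leapfrogging a third color $k$ onto $x$, and a preliminary application of \cref{lemma:SwapThird} when every available vertex carries only $i$ and $j$. Your conjugation (swap $i$ on $x$ with $a$ on $v_1$, induct, swap $a$ on $x$ with $j$ on $v_1$) is exactly the paper's inductive Case 4(a). The obstacle you single out concerns auxiliary paths in the adjacent case, which you treat as the paper does. The genuine gap is elsewhere, in the rest of the inductive step.

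Your conjugation is only legal when $i,j\notin c(v_1)$ and some $a\in c(v_1)\setminus c(x)$ exists. Otherwise one of the three swaps would place a second robot of some color on a vertex already carrying that color. Neither of your remedies fixes this. If $i\in c(v_1)$, the first swap fails for every choice of $a$, because the obstruction is the $i$-robot already on $v_1$, not the choice of $a$. Likewise, if $j\in c(v_1)$, the inductive swap fails. Borrowing a third color onto $v_1$ does not help either, since the $i$- and $j$-robots stay on $v_1$. In particular, when $i,j\in c(v_1)$ no choice of $a$ and no borrowing makes your scheme work. Your plan also says nothing about the case $i,j\notin c(v_1)$ with $c(v_1)\subseteq c(x)$, where no admissible $a$ exists.

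What is missing is a case split on $c(v_1)\cap\{i,j\}$ and on whether $x$ is available, with a different order of operations in each case:
\begin{itemize}
\item If $i\in c(v_1)$ and $j\notin c(v_1)$: first use induction to swap the $i$ on $v_1$ with the $j$ on $y$, then do the adjacent swap of $i$ on $x$ with $j$ on $v_1$.
\item If $j\in c(v_1)$ and $i\notin c(v_1)$: do the adjacent swap first, then induction.
\end{itemize}
These two cases are your scheme with $a=i$, resp.\ $a=j$, where one of the outer swaps becomes vacuous, but you need to say so rather than ``pick a different color.''
\begin{itemize}
\item If $i,j\in c(v_1)$: then $v_1$ is available, so move the $j$-robot from $v_1$ onto $x$ (legal since $j\notin c(x)$). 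Then swap the $i$ on $v_1$ with the $j$ on $y$ by induction, and finally move the $i$-robot from $x$ back onto $v_1$.
\item If $i,j\notin c(v_1)$ and $x$ is available: move the $i$-robot from $x$ to $v_1$, induct, then move the $j$-robot from $v_1$ back to $x$. This covers the case $c(v_1)\subseteq c(x)$.
\item If $i,j\notin c(v_1)$ and $x$ is unavailable: then $c(x)=\{i\}$, so any $a\in c(v_1)$ is admissible and your conjugation works.
\end{itemize}
No borrowing is needed anywhere in the inductive step; all work with third colors is confined to the adjacent case.
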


\begin{proof}
Since $G$ is connected, there is a path $P$ in $G$ from $x$ to $y$. To describe a path from $c$ to the desired cell $c',$ we will use induction on the number, $s$, of vertices of $P$. For the base case, $s=2$, $x$ and $y$ are adjacent. We will consider the following cases:

\textbf{Case 1: $\boldsymbol{x}$ or $\boldsymbol{y}$ is available}. Without loss of generality, assume $x$ is available. We can move the $i$-colored robot from $x$ onto $y$, then move the $j$-colored robot from $y$ onto $x$ to obtain $c'$. See \cref{fig:SwapColorsBase1}.

\begin{figure}[h!]
    \centering
        \begin{tikzpicture}
        \fg{
        \vertex{(0,0)}{}
        \vertex{(1,0)}{}
        \draw (0,0) to (1,0);
        }
        \GreenRobot{(0,0)}{g1}
        \RedRobot{(1,0)}{r1}
        \BlueRobot{(0,0.3)}{y1}
        \draw[bend left, ->] (g1) to node[above] {$\scriptstyle 1$}(r1);
        \draw[bend left, ->] (r1) to node[below] {$\scriptstyle2$} (g1);
        \node[below,yshift=-2mm] at (0,0) {$x$};
        \node[below,yshift=-2mm] at (1,0) {$y$};
        \node at (0.5,-.75) {$c\phantom{c'}$};
    \end{tikzpicture}
      \quad
        \begin{tikzpicture}
        \fg{
        \vertex{(0,0)}{}
        \vertex{(1,0)}{}
        \draw (0,0) to (1,0);
        }
        \GreenRobot{(1,0)}{g1}
        \RedRobot{(0,0)}{r1}
        \BlueRobot{(0,0.3)}{y1}
        \node[below,yshift=-2mm] at (0,0) {$x$};
        \node[below,yshift=-2mm] at (1,0) {$y$};
        \node at (0.5,-.75) {$c'$};
    \end{tikzpicture}
    \caption{Case 1 of the base case for \cref{cor:SwapColors}}
    \label{fig:SwapColorsBase1}
\end{figure}

\textbf{Case 2: Neither $\boldsymbol{x}$ nor $\boldsymbol{y}$ is available.}

\quad\textbf{Case 2(a): there is some color $\boldsymbol{k\notin\{i,j\}}$ with $\boldsymbol{k\in c(z)}$ for some available vertex $\boldsymbol{z}$}.  We can use  \cref{lemma:leapfrog2} to leapfrog a robot from $z$ to add a $k$-colored robot to $x$. We can then use Case 1 to swap the $i$-colored robot on $x$ with the $j$-colored robot on $y$. Finally, we can undo the leapfrog to obtain $c'$. See \cref{fig:SwapColorsBase2a}.

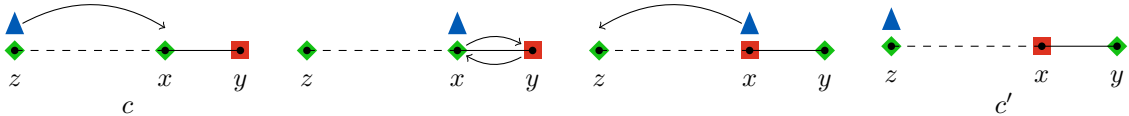
\begin{figure}[h!]
    \centering
    \begin{tikzpicture}
        \fg{
        \vertex{(-2,0)}{}
        \vertex{(0,0)}{}
        \vertex{(1,0)}{}
        \draw[dashed] (-2,0) to (0,0);
        \draw (0,0) to (1,0);
        }
        \GreenRobot{(0,0)}{g1}
        \RedRobot{(1,0)}{r1}
        \GreenRobot{(-2,0)}{g2}
        \BlueRobot{(-2,0.3)}{y1}
        \draw[bend left, ->] (y1) to ($(g1)+(0,0.3)$);
        \node[below,yshift=-2mm] at (-2,0) {$z$};
        \node[below,yshift=-2mm] at (0,0) {$x$};
        \node[below,yshift=-2mm] at (1,0) {$y$};
        \node at (-0.5,-.75) {$c$};
    \end{tikzpicture}
    \quad
    \begin{tikzpicture}
        \fg{
        \vertex{(-2,0)}{}
        \vertex{(0,0)}{}
        \vertex{(1,0)}{}
        \draw[dashed] (-2,0) to (0,0);
        \draw (0,0) to (1,0);}
        \GreenRobot{(0,0)}{g1}
        \RedRobot{(1,0)}{r1}
        \GreenRobot{(-2,0)}{g2}
        \BlueRobot{(0,0.3)}{y1}
        \draw[bend left, ->] (g1) to (r1);
        \draw[bend left, ->] (r1) to (g1);
        \node[below,yshift=-2mm] at (-2,0) {$z$};
        \node[below,yshift=-2mm] at (0,0) {$x$};
        \node[below,yshift=-2mm] at (1,0) {$y$};
        \node at (-0.5,-.75) {$\phantom c$};
    \end{tikzpicture}
    \quad
    \begin{tikzpicture}
        \fg{
        \vertex{(-2,0)}{}
        \vertex{(0,0)}{}
        \vertex{(1,0)}{}
        \draw[dashed] (-2,0) to  (0,0);
        \draw (0,0) to (1,0);}
        \GreenRobot{(1,0)}{g1}
        \RedRobot{(0,0)}{r1}
        \GreenRobot{(-2,0)}{g2}
        \BlueRobot{(0,0.3)}{y1}
        \draw[bend right, ->] (y1) to ($(g2)+(0,0.3)$);
        \node[below,yshift=-2mm] at (-2,0) {$z$};
        \node[below,yshift=-2mm] at (0,0) {$x$};
        \node[below,yshift=-2mm] at (1,0) {$y$};
        \node at (-0.5,-.75) {$\phantom c$};
    \end{tikzpicture}
    \quad
    \begin{tikzpicture}
        \fg{
        \vertex{(-2,0)}{}
        \vertex{(0,0)}{}
        \vertex{(1,0)}{}
        \draw[dashed] (-2,0) to  (0,0);
        \draw (0,0) to (1,0);}
        \GreenRobot{(1,0)}{g1}
        \RedRobot{(0,0)}{r1}
        \GreenRobot{(-2,0)}{g2}
        \BlueRobot{(-2,0.3)}{y1}
        \node[below,yshift=-2mm] at (-2,0) {$z$};
        \node[below,yshift=-2mm] at (0,0) {$x$};
        \node[below,yshift=-2mm] at (1,0) {$y$};
        \node at (-0.5,-.75) {$c'$};
    \end{tikzpicture}
    \caption{Case 2(a) of the base case for \cref{cor:SwapColors}}
    \label{fig:SwapColorsBase2a}
\end{figure}

\quad\textbf{Case 2(b): every available vertex contains only $\boldsymbol{i}$- and $\boldsymbol{j}$-colored robots}. Let $z$ be any available vertex (such a vertex exists because $\vec r$ is non-trivial), and let $w$ be a vertex with a $k$-colored robot, for some $k\notin\{i,j\}$, such that the path from $z$ to $w$ contains no $k$-colored robots other than the one on $w.$ We can use \cref{lemma:SwapThird} to swap the $k$-colored robot on $w$ with the $i$-colored robot on $z$. We can then use Case 2(a) to swap the $i$-colored robot on $x$ with the $j$-colored robot on $y$. Then we can again use \cref{lemma:SwapThird} to swap the $k$-colored robot on $z$ with the $i$-colored robot on $w.$ See \cref{fig:SwapColorsBase2b}.

\begin{figure}[h!]
    \centering
    \begin{tikzpicture}
    \fg{
        \node[below, yshift=-2mm] at (0,0) {$x$};
        \node[below, yshift=-2mm] at (1,0) {$y$};
        \node[below, yshift=-2mm] at (-1,0) {$z$};
        \node[below, yshift=-2mm] at (-2,0) {$w$};
        \vertex{(0,0)}{}
        \vertex{(1,0)}{}
        \vertex{(-1,0)}{}
        \vertex{(-2,0)}{}
        \draw (0,0) to (1,0);
        \draw[dashed] (-2,0) to (0,0);}
        \GreenRobot{(0,0)}{g1}
        \RedRobot{(1,0)}{r1}
        \RedRobot{(-1,0)}{r2}
        \GreenRobot{(-1,0.3)}{g2}
        \BlueRobot{(-2,0)}{y1}
        \draw[bend right,->] (y1) to (g2);
        \draw[bend right,->] (g2) to (y1);
        \node at (-0.5,-.75) {$c$};
    \end{tikzpicture}
    \quad
    \begin{tikzpicture}
        \fg{
        \node[below, yshift=-2mm] at (0,0) {$x$};
        \node[below, yshift=-2mm] at (1,0) {$y$};
        \node[below, yshift=-2mm] at (-1,0) {$z$};
        \node[below, yshift=-2mm] at (-2,0) {$w$};
        \vertex{(0,0)}{}
        \vertex{(1,0)}{}
        \vertex{(-1,0)}{}
        \vertex{(-2,0)}{}
        \draw (0,0) to (1,0);
        \draw[dashed] (-2,0) to (0,0);}
        \GreenRobot{(0,0)}{g1}
        \RedRobot{(1,0)}{r1}
        \RedRobot{(-1,0)}{r2}
        \GreenRobot{(-2,0)}{g2}
        \BlueRobot{(-1,0.3)}{y1}
        \draw[bend right,->] (g1) to (r1);
        \draw[bend right,->] (r1) to (g1);
        \node at (-0.5,-.75) {\phantom{$c'$}};
    \end{tikzpicture}
    \quad
    \begin{tikzpicture}
        \fg{
        \node[below, yshift=-2mm] at (0,0) {$x$};
        \node[below, yshift=-2mm] at (1,0) {$y$};
        \node[below, yshift=-2mm] at (-1,0) {$z$};
        \node[below, yshift=-2mm] at (-2,0) {$w$};
        \vertex{(0,0)}{}
        \vertex{(1,0)}{}
        \vertex{(-1,0)}{}
        \vertex{(-2,0)}{}
        \draw (0,0) to (1,0);
        \draw[dashed] (-2,0) to (0,0);}
        \GreenRobot{(1,0)}{g1}
        \RedRobot{(0,0)}{r1}
        \RedRobot{(-1,0)}{r2}
        \GreenRobot{(-2,0)}{g2}
        \BlueRobot{(-1,0.3)}{y1}
        \draw[bend right,->] (y1) to (g2);
        \draw[bend right,->] (g2) to (y1);
        \node at (-0.5,-.75) {\phantom{$c'$}};
    \end{tikzpicture}
    \quad
    \begin{tikzpicture}
        \fg{
        \node[below, yshift=-2mm] at (0,0) {$x$};
        \node[below, yshift=-2mm] at (1,0) {$y$};
        \node[below, yshift=-2mm] at (-1,0) {$z$};
        \node[below, yshift=-2mm] at (-2,0) {$w$};
        \vertex{(0,0)}{}
        \vertex{(1,0)}{}
        \vertex{(-1,0)}{}
        \vertex{(-2,0)}{}
        \draw (0,0) to (1,0);
        \draw[dashed] (-2,0) to (0,0);}
        \GreenRobot{(1,0)}{g1}
        \RedRobot{(0,0)}{r1}
        \RedRobot{(-1,0)}{r2}
        \GreenRobot{(-1,0.3)}{g2}
        \BlueRobot{(-2,0)}{y1}
        \node at (-0.5,-.75) {$c'$};
    \end{tikzpicture}
    \caption{Case 2(b) of the base case for \cref{cor:SwapColors}}
    \label{fig:SwapColorsBase2b}
\end{figure}
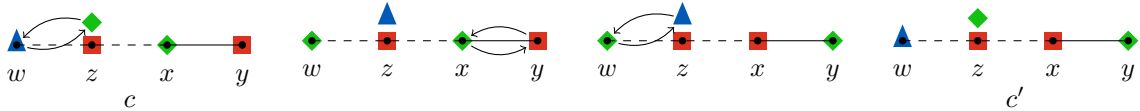

For the inductive step, assume the claim is true for paths of length $s$ and consider a case where the length of the path is $s+1$. Let $\tilde x$ be the vertex adjacent to $x$ on $P$. We will consider the following cases:

\textbf{Case 1: $\boldsymbol{i\in c(\tilde x),\ j\notin c(\tilde x)}$}. Using the inductive hypothesis, we can swap the $i$-colored robot on $\tilde x$ with the $j$-colored robot on $y$. Then, using the base case, we can swap the $i$-colored robot on $x$ with the $j$-colored robot on $\tilde x$ to obtain $c'$.  See \cref{fig:SwapColorsInductive1}.

\begin{figure}[h!]
    \centering
    \begin{tikzpicture}
        \fg{\node[below, yshift=-2mm] at (0,0) {$x$};
        \node[below, yshift=-2mm] at (1,0) {$\tilde x$};
        \node[below, yshift=-2mm] at (3,0) {$y$};
        \vertex{(0,0)}{}
        \vertex{(1,0)}{}
        \vertex{(3,0)}{}
        \draw (0,0) to (1,0);
        \draw[dashed] (1,0) to (3,0);}
        \GreenRobot{(0,0)}{g1}
        \GreenRobot{(1,0)}{g2}
        \RedRobot{(3,0)}{r1}
        \draw[bend left , ->](g2) to (r1);
        \draw[bend left, ->] (r1) to (g2);
        \node at (1.5,-.7){$c$};
    \end{tikzpicture}
    \quad
    \begin{tikzpicture}
     \fg{\node[below, yshift=-2mm] at (0,0) {$x$};
        \node[below, yshift=-2mm] at (1,0) {$\tilde x$};
        \node[below, yshift=-2mm] at (3,0) {$y$};
        \vertex{(0,0)}{}
        \vertex{(1,0)}{}
        \vertex{(3,0)}{}
        \draw (0,0) to (1,0);
        \draw[dashed] (1,0) to (3,0);}
        \GreenRobot{(0,0)}{g1}
        \GreenRobot{(3,0)}{g2}
        \RedRobot{(1,0)}{r1}
        \draw[bend left, ->] (g1) to (r1);
        \draw[bend left, ->](r1) to (g1); 
         \node at (1.5,-.7){$\phantom c$};
    \end{tikzpicture}
    \quad
        \begin{tikzpicture}
     \fg{\node[below, yshift=-2mm] at (0,0) {$x$};
        \node[below, yshift=-2mm] at (1,0) {$\tilde x$};
        \node[below, yshift=-2mm] at (3,0) {$y$};
        \vertex{(0,0)}{}
        \vertex{(1,0)}{}
        \vertex{(3,0)}{}
        \draw (0,0) to (1,0);
        \draw[dashed] (1,0) to (3,0);}
        \GreenRobot{(1,0)}{g1}
        \GreenRobot{(3,0)}{g2}
        \RedRobot{(0,0)}{r1}
         \node at (1.5,-.7){$c'$};
    \end{tikzpicture}
    \caption{Case 1 of the inductive step of \cref{cor:SwapColors}}
    \label{fig:SwapColorsInductive1}
\end{figure}

\textbf{Case 2: $\boldsymbol{j\in c(\tilde x),\ i\notin c(\tilde x)}$}. Using the base case, we can swap the $i$-colored robot on $x$ with the $j$-colored robot on $\tilde x$, then use the inductive hypothesis to swap the $i$-colored robot on $\tilde x$ with the $j$-colored robot on $y$ to obtain $c'$.  See \cref{fig:SwapColorsInductive2}.

\begin{figure}[h!]
    \centering
    \begin{tikzpicture}
        \fg{\node[below, yshift=-2mm] at (0,0) {$x$};
        \node[below, yshift=-2mm] at (1,0) {$\tilde x$};
        \node[below, yshift=-2mm] at (3,0) {$y$};
        \vertex{(0,0)}{}
        \vertex{(1,0)}{}
        \vertex{(3,0)}{}
        \draw (0,0) to (1,0);
        \draw[dashed] (1,0) to (3,0);}
        \GreenRobot{(0,0)}{g1}
        \RedRobot{(3,0)}{r2}
        \RedRobot{(1,0)}{r1}
        \draw[bend left , ->](g1) to (r1);
        \draw[bend left, ->] (r1) to (g1);
        \node at (1.5,-.7){$c$};
    \end{tikzpicture}
    \quad
    \begin{tikzpicture}
     \fg{\node[below, yshift=-2mm] at (0,0) {$x$};
        \node[below, yshift=-2mm] at (1,0) {$\tilde x$};
        \node[below, yshift=-2mm] at (3,0) {$y$};
        \vertex{(0,0)}{}
        \vertex{(1,0)}{}
        \vertex{(3,0)}{}
        \draw (0,0) to (1,0);
        \draw[dashed] (1,0) to (3,0);}
        \GreenRobot{(1,0)}{g1}
        \RedRobot{(3,0)}{r2}
        \RedRobot{(0,0)}{r1}
        \draw[bend left, ->] (g1) to (r2);
        \draw[bend left, ->](r2) to (g1); 
         \node at (1.5,-.7){$\phantom c$};
    \end{tikzpicture}
    \quad
        \begin{tikzpicture}
     \fg{\node[below, yshift=-2mm] at (0,0) {$x$};
        \node[below, yshift=-2mm] at (1,0) {$\tilde x$};
        \node[below, yshift=-2mm] at (3,0) {$y$};
        \vertex{(0,0)}{}
        \vertex{(1,0)}{}
        \vertex{(3,0)}{}
        \draw (0,0) to (1,0);
        \draw[dashed] (1,0) to (3,0);}
        \GreenRobot{(3,0)}{g1}
        \RedRobot{(0,0)}{r2}
        \RedRobot{(1,0)}{r1};
         \node at (1.5,-.7){$c'$};
    \end{tikzpicture}
    \caption{Case 2 of the inductive step of \cref{cor:SwapColors}}
    \label{fig:SwapColorsInductive2}
\end{figure}

\textbf{Case 3: $\boldsymbol{i,j\in c(\tilde x)}$}. We can move the $j$-colored robot from $\tilde x$ onto $x$, then use the inductive hypothesis to swap the $i$-colored robot on $\tilde x$ with the $j$-colored robot on $y$, then finally move the $i$-colored robot from $x$ onto $\tilde x$ to obtain $c'$.   See \cref{fig:SwapColorsInductive3}.

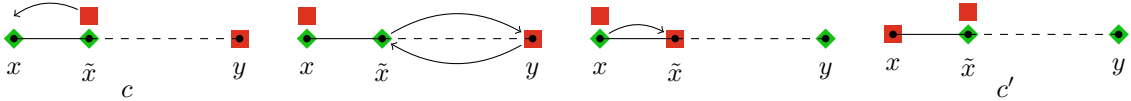
\begin{figure}[h!]
    \centering
    \begin{tikzpicture}
        \fg{\node[below, yshift=-2mm] at (0,0) {$x$};
        \node[below, yshift=-2mm] at (1,0) {$\tilde x$};
        \node[below, yshift=-2mm] at (3,0) {$y$};
        \vertex{(0,0)}{}
        \vertex{(1,0)}{}
        \vertex{(3,0)}{}
        \draw (0,0) to (1,0);
        \draw[dashed] (1,0) to (3,0);}
        \GreenRobot{(0,0)}{g2}
        \RedRobot{(3,0)}{r2}
        \RedRobot{(1,0.3)}{r1}
        \GreenRobot{(1,0)}{g1}
        \draw[bend right, ->](r1) to ($(g2)+(0,0.3)$);
        \node at (1.5,-.7){$c$};
    \end{tikzpicture}
    \quad
    \begin{tikzpicture}
     \fg{\node[below, yshift=-2mm] at (0,0) {$x$};
        \node[below, yshift=-2mm] at (1,0) {$\tilde x$};
        \node[below, yshift=-2mm] at (3,0) {$y$};
        \vertex{(0,0)}{}
        \vertex{(1,0)}{}
        \vertex{(3,0)}{}
        \draw (0,0) to (1,0);
        \draw[dashed] (1,0) to (3,0);}
        \GreenRobot{(1,0)}{g2}
        \RedRobot{(3,0)}{r2}
        \RedRobot{(0,0.3)}{r1}
        \GreenRobot{(0,0)}{g1}
        \draw[bend left, ->] (g2) to (r2);
        \draw[bend left, ->](r2) to (g2); 
         \node at (1.5,-.7){$\phantom c$};
    \end{tikzpicture}
    \quad
        \begin{tikzpicture}
        \fg{\node[below, yshift=-2mm] at (0,0) {$x$};
        \node[below, yshift=-2mm] at (1,0) {$\tilde x$};
        \node[below, yshift=-2mm] at (3,0) {$y$};
        \vertex{(0,0)}{}
        \vertex{(1,0)}{}
        \vertex{(3,0)}{}
        \draw (0,0) to (1,0);
        \draw[dashed] (1,0) to (3,0);}
        \GreenRobot{(3,0)}{g2}
        \RedRobot{(1,0)}{r2}
        \RedRobot{(0,0.3)}{r1}
        \GreenRobot{(0,0)}{g1}
         \draw[bend left, ->] (g1) to (r2);
         \node at (1.5,-.7){$\phantom c$};
    \end{tikzpicture}
        \quad
        \begin{tikzpicture}
     \fg{\node[below, yshift=-2mm] at (0,0) {$x$};
        \node[below, yshift=-2mm] at (1,0) {$\tilde x$};
        \node[below, yshift=-2mm] at (3,0) {$y$};
        \vertex{(0,0)}{}
        \vertex{(1,0)}{}
        \vertex{(3,0)}{}
        \draw (0,0) to (1,0);
        \draw[dashed] (1,0) to (3,0);}
        \GreenRobot{(3,0)}{g2}
        \RedRobot{(1,0.3)}{r2}
        \RedRobot{(0,0)}{r1}
        \GreenRobot{(1,0)}{g1}
         \node at (1.5,-.7){$c'$};
    \end{tikzpicture}
    \caption{Case 3 of the inductive step of \cref{cor:SwapColors}}
    \label{fig:SwapColorsInductive3}
\end{figure}

\textbf{Case 4: $\boldsymbol{i,j\notin c(\tilde x)}$}.

\quad\textbf{Case 4(a): $\boldsymbol{x}$ is not available}.
Using the base case, we can swap the $i$-colored robot on $x$ and a $k$-colored robot on $\tilde x$. Then, using the inductive hypothesis, we can swap the $i$-colored robot on $\tilde x$ with the $j$-colored robot on $y$. Using the base case again, we can swap the $k$-colored robot on $x$ with the $j$-colored robot on $\tilde x$ to obtain $c'$.  See \cref{fig:SwapColorsInductive4a}.

\begin{figure}[h!]
    \centering
    \begin{tikzpicture}
       \fg{\node[below, yshift=-2mm] at (0,0) {$x$};
        \node[below, yshift=-2mm] at (1,0) {$\tilde x$};
        \node[below, yshift=-2mm] at (3,0) {$y$};
        \vertex{(0,0)}{}
        \vertex{(1,0)}{}
        \vertex{(3,0)}{}
        \draw (0,0) to (1,0);
        \draw[dashed] (1,0) to (3,0);}
        \GreenRobot{(0,0)}{g1}
        \RedRobot{(3,0)}{r1}
        \BlueRobot{(1,0)}{y1}
        \draw[bend right, ->](g1) to (y1);
        \draw[bend right, ->](y1) to (g1);
        \node at (1.5,-.7){$c$};
    \end{tikzpicture}
    \quad
    \begin{tikzpicture}
     \fg{\node[below, yshift=-2mm] at (0,0) {$x$};
        \node[below, yshift=-2mm] at (1,0) {$\tilde x$};
        \node[below, yshift=-2mm] at (3,0) {$y$};
        \vertex{(0,0)}{}
        \vertex{(1,0)}{}
        \vertex{(3,0)}{}
        \draw (0,0) to (1,0);
        \draw[dashed] (1,0) to (3,0);}
        \GreenRobot{(1,0)}{g1}
        \RedRobot{(3,0)}{r1}
        \BlueRobot{(0,0)}{y1}
        \draw[bend right, ->] (g1) to (r1);
        \draw[bend right, ->](r1) to (g1); 
         \node at (1.5,-.7){$\phantom c$};
    \end{tikzpicture}
    \quad
        \begin{tikzpicture}
     \fg{\node[below, yshift=-2mm] at (0,0) {$x$};
        \node[below, yshift=-2mm] at (1,0) {$\tilde x$};
        \node[below, yshift=-2mm] at (3,0) {$y$};
        \vertex{(0,0)}{}
        \vertex{(1,0)}{}
        \vertex{(3,0)}{}
        \draw (0,0) to (1,0);
        \draw[dashed] (1,0) to (3,0);}
        \GreenRobot{(3,0)}{g1}
        \RedRobot{(1,0)}{r1}
        \BlueRobot{(0,0)}{y1}
        \draw[bend right, ->] (r1) to (y1);
        \draw[bend right, ->](y1) to (r1); 
         \node at (1.5,-.7){$\phantom c$};
    \end{tikzpicture}
        \quad
        \begin{tikzpicture}
     \fg{\node[below, yshift=-2mm] at (0,0) {$x$};
        \node[below, yshift=-2mm] at (1,0) {$\tilde x$};
        \node[below, yshift=-2mm] at (3,0) {$y$};
        \vertex{(0,0)}{}
        \vertex{(1,0)}{}
        \vertex{(3,0)}{}
        \draw (0,0) to (1,0);
        \draw[dashed] (1,0) to (3,0);}
        \GreenRobot{(3,0)}{g1}
        \RedRobot{(0,0)}{r1}
        \BlueRobot{(1,0)}{y1}
         \node at (1.5,-.7){$c'$};
    \end{tikzpicture}
    \caption{Case 4(a) of the inductive step of \cref{cor:SwapColors}}
    \label{fig:SwapColorsInductive4a}
\end{figure}

\quad\textbf{Case 4(b): $\boldsymbol{x}$ is available}. We can move the $i$-colored robot from $x$ onto $\tilde x$. Using the inductive hypothesis, we can swap the $i$-colored robot on $\tilde x$ with the $j$-colored robot on $y$. Then, we can move the $j$-colored robot from $\tilde x$ onto $x$ to obtain $c'$.  See \cref{fig:SwapColorsInductive4b}.

\begin{figure}[h!]
    \centering
    \begin{tikzpicture}
        \fg{\node[below, yshift=-2mm] at (0,0) {$x$};
        \node[below, yshift=-2mm] at (1,0) {$\tilde x$};
        \node[below, yshift=-2mm] at (3,0) {$y$};
        \vertex{(0,0)}{}
        \vertex{(1,0)}{}
        \vertex{(3,0)}{}
        \draw (0,0) to (1,0);
        \draw[dashed] (1,0) to (3,0);}
        \GreenRobot{(0,0)}{g1}
        \RedRobot{(3,0)}{r1}
        \BlueRobot{(1,0)}{y1}
        \BlueRobot{(0,0.3)}{y2}
        \draw[bend left, ->](g1) to ($(y1) +(0,0.3)$); 
        \node at (1.5,-.7){$c$};
    \end{tikzpicture}
    \quad
    \begin{tikzpicture}
     \fg{\node[below, yshift=-2mm] at (0,0) {$x$};
        \node[below, yshift=-2mm] at (1,0) {$\tilde x$};
        \node[below, yshift=-2mm] at (3,0) {$y$};
        \vertex{(0,0)}{}
        \vertex{(1,0)}{}
        \vertex{(3,0)}{}
        \draw (0,0) to (1,0);
        \draw[dashed] (1,0) to (3,0);}
        \GreenRobot{(1,0)}{g1}
        \RedRobot{(3,0)}{r1}
        \BlueRobot{(1,0.3)}{y1}
        \BlueRobot{(0,0)}{y2}
        \draw[bend right, ->] (g1) to (r1);
        \draw[bend right, ->](r1) to (g1); 
         \node at (1.5,-.7){$\phantom c$};
    \end{tikzpicture}
    \quad
        \begin{tikzpicture}
     \fg{\node[below, yshift=-2mm] at (0,0) {$x$};
        \node[below, yshift=-2mm] at (1,0) {$\tilde x$};
        \node[below, yshift=-2mm] at (3,0) {$y$};
        \vertex{(0,0)}{}
        \vertex{(1,0)}{}
        \vertex{(3,0)}{}
        \draw (0,0) to (1,0);
        \draw[dashed] (1,0) to (3,0);}
        \GreenRobot{(3,0)}{g1}
        \RedRobot{(1,0)}{r1}
        \BlueRobot{(1,0.3)}{y1}
        \BlueRobot{(0,0)}{y2}
        \draw[bend right, ->] (r1) to ($(y2) +(0,0.3)$); 
         \node at (1.5,-.7){$\phantom c$};
    \end{tikzpicture}
        \quad
        \begin{tikzpicture}
    \fg{\node[below, yshift=-2mm] at (0,0) {$x$};
        \node[below, yshift=-2mm] at (1,0) {$\tilde x$};
        \node[below, yshift=-2mm] at (3,0) {$y$};
        \vertex{(0,0)}{}
        \vertex{(1,0)}{}
        \vertex{(3,0)}{}
        \draw (0,0) to (1,0);
        \draw[dashed] (1,0) to (3,0);}
        \GreenRobot{(3,0)}{g1}
        \RedRobot{(0,0)}{r1}
        \BlueRobot{(1,0)}{y1}
        \BlueRobot{(0,0.3)}{y2}
         \node at (1.5,-.7){$c'$};
    \end{tikzpicture}
    \caption{Case 4(b) of the inductive step of \cref{cor:SwapColors}}
    \label{fig:SwapColorsInductive4b}
\end{figure}
\end{proof}

Next, we show that if $c$ and $c'$ are $0$-cells such that $|c(v)|=|c'(v)|$ for all $v\in V(G),$ there is a path in $\Sr(G)$ from $c$ to $c'. $ In this case, we say that $c$ and $c'$ have the same \textit{type}.

\begin{defn}\label[defn]{defn:type}
    Given  0-cells $c$ and $c'$ and a vertex $v\in V(G)$, let $d_v(c,c')=|c(v)|-|c'(v)|.$ We say $c$ and $c'$ have the same \textit{type} if $d_v(c,c')=0$ for all $v\in V(G)$.
\end{defn}

\begin{lemma}\label[cor]{cor:SameType}
    If $G$ is a connected graph, and $c$ and $c'$ are 0-cells of the same type in $\Sr(G),$ there is a path in $\Sr(G)$ from $c$ to $c'$.
\end{lemma}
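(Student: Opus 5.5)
The plan is to induct on a mismatch measure and use \cref{cor:SwapColors} as the only tool. Since $c$ and $c'$ are 0-cells, every element of each $c_i$ and $c'_i$ is a vertex. So for every color $i$ the number of vertices carrying $i$ is $\ell_i$ in both $c$ and $c'$. Define
\[
M(c,c')=\sum_{v\in V(G)}|c(v)\setminus c'(v)|.
\]
Since $c$ and $c'$ have the same type, $|c(v)\setminus c'(v)|=|c'(v)\setminus c(v)|$ for every $v$, and $M(c,c')=0$ exactly when $c=c'$. I will show that whenever $M>0$ there is a path in $\Sr(G)$ from $c$ to a 0-cell $\tilde c$ of the same type with $M(\tilde c,c')<M(c,c')$. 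Induction then finishes the proof. Note that any swap as in \cref{defn:SwapColors} preserves type, because it only exchanges one color for another at each of two vertices.

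\textbf{Finding a swap.} Suppose $M>0$ and pick a vertex $x$ and a color $i\in c(x)\setminus c'(x)$. Because $|c(x)|=|c'(x)|$, there is a color $j\in c'(x)\setminus c(x)$, and $j\ne i$. Color $j$ occupies $\ell_j$ vertices in both cells, and $x$ carries $j$ in $c'$ but not in $c$. Hence there is a vertex $y\ne x$ with $j\in c(y)\setminus c'(y)$.

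If $i\notin c(y)$, then $i\in c(x)$, $j\notin c(x)$, $j\in c(y)$ and $i\notin c(y)$, so \cref{cor:SwapColors} lets us swap the $i$-colored robot on $x$ with the $j$-colored robot on $y$. Call the result $\tilde c$.
\begin{itemize}
\item At $x$, the mismatch drops by one, since $i$ was excess and $j$ was missing.
\item At $y$, the excess color $j$ is removed and $i$ is added. This changes $|\tilde c(y)\setminus c'(y)|$ by $-1+1$ at worst.
\end{itemize}
So $M$ strictly decreases.

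\textbf{The bad case, which I expect to be the main obstacle.} The difficulty is when every $y$ with $j\in c(y)\setminus c'(y)$ also has $i\in c(y)$. My first attempt is to exploit the freedom in the choice of $(x,i,j)$ by following an alternating chain. At such a $y$, the color $j$ is excess, so there is some $k\in c'(y)\setminus c(y)$, and $k\ne i$ because $i\in c(y)$. I then retry with the triple $(y,j,k)$ in place of $(x,i,j)$. Iterating gives a sequence of (vertex, excess color, missing color) triples. Since there are finitely many, the sequence either reaches a triple where the direct swap applies, which decreases $M$, or it closes into a cycle.

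For a cycle I would argue directly. The colors cycle around vertices $x_1,\dots,x_t$, with $x_s$ holding the excess color $a_s$ and missing $a_{s+1}$. A sequence of \cref{cor:SwapColors} swaps along consecutive vertices, possibly passing through a temporarily worse 0-cell, should realize the cyclic rotation and decrease $M$ by at least $t$ overall. If this bookkeeping becomes messy, the fallback is to strengthen the induction to $\sum_v|c(v)\triangle c'(v)|$ and allow an intermediate swap that leaves $M$ unchanged but strictly reduces the number of bad pairs.
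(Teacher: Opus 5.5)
Your easy case is correct: the swap is legal by \cref{cor:SwapColors}, it preserves type, and it lowers $M$. The gap is everything after that, and that is where all of the lemma's content lies.

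The bad case is not vacuous. On any connected graph with three vertices $x_1,x_2,x_3$, take $\vec r=(2,2,2)$ and
\begin{itemize}
\item $c(x_1)=\{1,3\}$, $c(x_2)=\{1,2\}$, $c(x_3)=\{2,3\}$;
\item $c'(x_1)=\{2,3\}$, $c'(x_2)=\{1,3\}$, $c'(x_3)=\{1,2\}$.
\end{itemize}
Every triple is bad in your sense, and your chain is the cycle $(x_1,1,2)\to(x_2,2,3)\to(x_3,3,1)\to(x_1,1,2)$.

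For such cycles you only assert that some sequence of consecutive swaps ``should'' realize the rotation. But the natural consecutive swap, the excess $a_s$ on $x_s$ with the excess $a_{s+1}$ on $x_{s+1}$, is exactly what \cref{cor:SwapColors} forbids, since badness says $a_s\in c(x_{s+1})$. You never say which swaps to make, nor check the hypotheses $i\in c(x)$, $j\notin c(x)$, $j\in c(y)$, $i\notin c(y)$ for them. The fallback is empty too. For cells of the same type, $\sum_v|c(v)\triangle c'(v)|=2M$, so it is not a stronger measure. You also give no argument that an $M$-neutral swap reducing ``bad pairs'' exists.

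The paper supplies the missing idea as follows.
\begin{itemize}
\item Orient the cycle as $(i_1,v_1)\to\cdots\to(i_p,v_p)\to(i_1,v_1)$, where $i_t$ is extra at $v_t$ and missing at $v_{t+1}$.
\item Let $t\in\{2,\dots,p+1\}$ (with $v_{p+1}=v_1$) be minimal with $i_1\in c(v_t)$; necessarily $t\ge 3$.
\item Swap that $i_1$-robot backwards: with $i_{t-1}$ on $v_{t-1}$, then with $i_{t-2}$ on $v_{t-2}$, down to $v_2$. Each swap is legal because $i_s$ is missing at $v_{s+1}$, and by minimality of $t$, $i_1$ is absent from $v_2,\dots,v_{t-1}$.
\item What remains is a strictly shorter cycle, or nothing if $t=p+1$.
\end{itemize}
The quantity that drops is the cycle length, not $M$. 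Indeed, $M$ can stay constant at an individual swap.

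Alternatively, you can close your own induction with a two-sided step. Write $X_k=\{v: k\in c(v)\}$ and $X'_k=\{v: k\in c'(v)\}$. For a triple $(x,i,j)$, any $y$ with $j\in c(y)$, $i\notin c(y)$, and ($j\notin c'(y)$ or $i\in c'(y)$) gives an $M$-lowering swap. In the example, $y=x_3$ works for $(x_1,1,2)$. If no such $y$ exists, then $X_j\setminus X_i\subsetneq X'_j\setminus X'_i$, since $x$ lies only in the right-hand set. This forces $|X_i\cap X_j|>|X'_i\cap X'_j|$. The symmetric attempt from $c'$, swapping its $j$-robot on $x$ with an $i$-robot, fails only under the reverse inequality. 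Hence one of $c$, $c'$ can always be moved to lower $M$, and reversing paths completes the induction.
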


\begin{proof}
Consider two 0-cells $c$ and $c'$ of the same type, as in \cref{fig:sameType2}. We say that a color $i$ is an \textit{extra color} on a vertex $v$ if $i\in c(v)$ but $i\notin c'(v).$ For example, in \cref{fig:sameType2}, color 2 (green/diamond) is an extra color on $v_1,$ since there is a green/diamond robot on $v_1$ in $c,$ but not in $c'.$ If $i$ is an extra color on $v$, there must be some other vertex $u$ such that $i\in c'(u)$ but $i\notin c(u);$ we say $i$ is a \textit{missing color} on $u$. In \cref{fig:sameType2}, green/diamond is missing on $v_2$ since there is a green/diamond robot on $v_2$ in $c',$ but not in $c.$ Since $|c(u)|=|c'(u)|,$ and $u$ has a missing color, $u$ must also have some extra color $j$. In \cref{fig:sameType2}, red/square is an extra color on $v_2$. We use the notation $(i,v)\to(j,u)$ to denote that $i$ is an extra color on $v$, $i$ is a missing color on $u,$ and $j$ is an extra color on $u.$ The relationship $(2,v_1)\to(1,v_2)$ is shown in \cref{fig:sameType2} as an arc from the 2-colored (green/diamond) robot on $v_1$ to the 1-colored (red/square) robot on $v_2.$

\begin{figure}[h!]
    \centering
    \begin{tikzpicture}
        \Xgraph
        \node[below,yshift=-1mm] at (A) {$v_1$};
        \node[above left] at (B) {$v_2$};
        \node[below,yshift=-1mm] at (C) {$v_3$};
        \node[below left] at (D) {$v_4$};
        \node[below,yshift=-1mm] at (E) {$v_5$};
        \GreenRobot{(A)}{r1}
        \YellowRobot{($(A)+(0.3,0)$)}{y2}
        \RedRobot{(B)}{g1}
        \BlueRobot{(C)}{b1}
        \YellowRobot{($(C)+(0.3,0)$)}{y1}
        \GreenRobot{(D)}{r2}
        \RedRobot{($(D)+(0,.3)$)}{g2}
        \GreenRobot{(E)}{r3}
        \BlueRobot{($(E)+(-.3,0)$)}{b2}
        \draw[->,bend right] (r1) to (g1);
        \draw[->,bend right] (g1) to (b1);
        \draw[->,bend right] (b1) to (g2);
        \draw[->,bend right] (g2) to (b2);
        \draw[->,bend right] (b2) to (r1);
        \node at (0,-2) {$c$};
    \end{tikzpicture}
    \quaad  
    \begin{tikzpicture}
        \Xgraph
        \node[below,yshift=-1mm] at (A) {$v_1$};
        \node[above left] at (B) {$v_2$};
        \node[below,yshift=-1mm] at (C) {$v_3$};
        \node[below left] at (D) {$v_4$};
        \node[below,yshift=-1mm] at (E) {$v_5$};
        \BlueRobot{(A)}{r1}
        \YellowRobot{($(A)+(0.3,0)$)}{y2}
        \GreenRobot{(B)}{g1}
        \RedRobot{(C)}{b1}
        \GreenRobot{($(C)+(0.3,0)$)}{y1}
        \YellowRobot{(D)}{r2}
        \BlueRobot{($(D)+(0,.3)$)}{g2}
        \GreenRobot{(E)}{r3}
        \RedRobot{($(E)+(-.3,0)$)}{b2}
        \node at (0,-2) {$c'$};
    \end{tikzpicture}
    \caption{Two cells $c,c'$ of the same type, and a cycle in $c$}
    \label{fig:sameType2}
\end{figure}
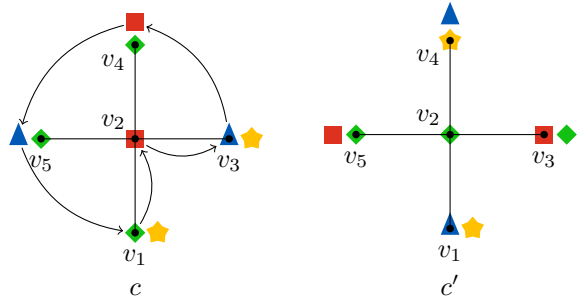

We can create a cycle of the form 
\begin{equation}\label{eqn:Cycle}
    (i_1,v_1)\to(i_2,v_2)\to\cdots\to(i_p,v_p)\to(i_{p+1},v_{p+1}),
\end{equation}
where the vertices $v_1,\dots,v_p$ are distinct, $i_{p+1}=i_1$ and $v_{p+1}=v_1,$ and $i_t$ is an extra color on $v_t$ and a missing color on $v_{t+1}$ for $t=1,\dots,p.$  \cref{fig:sameType2} shows the cycle $(2,v_1)\to (1,v_2)\to(3,v_3)\to(1,v_4)\to(3,v_5)\to(2,v_1).$

The cycle in (\ref{eqn:Cycle}) defines a permutation of the robots in which the extra $i_t$-colored robot on $v_t$  gets moved to the vertex $v_{t+1},$ on which $i_t$ is missing. We will show that we can use a sequence of color swaps to achieve this permutation, giving a path (via \cref{cor:SwapColors}) from $c$ to another 0-cell with fewer extra/missing colors.

Let $t\in\{2,3,\dots,p+1\}$ be the minimal value such that $i_1\in c(v_t)$. In other words, $v_t$ is the first vertex in the cycle after $v_1$ which contains an $i_1$-colored robot ($i_1=\text{green/diamond}$ and $t=4$ in \cref{fig:sameType2}). Notice that since $i_1$ is missing on $v_2,$ we must actually have $t\ge 3.$ Since $i_{t-1}$ is missing on $v_t,$ and there is no $i_1$-colored robot on $v_{t-1}$ (by the minimality of $t$), we can swap the $i_1$-colored robot on $v_t$ with the $i_{t-1}$-colored robot on $v_{t-1}.$ See the left side of \cref{fig:colorSwapSequence}. After this, we can similarly swap the $i_1$-colored robot on $v_{t-1}$ with the $i_{t-2}$-colored robot on $v_{t-2}$ (see the middle of \cref{fig:colorSwapSequence}), and continue this process until we have an $i_1$-colored robot on $v_2$, and $i_s$-colored robots on $v_{s+1}$ for $s=2,\dots,t-1.$

\begin{figure}[h!]
    \centering
    \begin{tikzpicture}
        \Xgraph
        \node[below,yshift=-1mm] at (A) {$v_1$};
        \node[above left] at (B) {$v_2$};
        \node[below,yshift=-1mm] at (C) {$v_3$};
        \node[below left] at (D) {$v_4$};
        \node[below,yshift=-1mm] at (E) {$v_5$};
        \GreenRobot{(A)}{r1}
        \YellowRobot{($(A)+(0.3,0)$)}{y2}
        \RedRobot{(B)}{g1}
        \BlueRobot{(C)}{b1}
        \YellowRobot{($(C)+(0.3,0)$)}{y1}
        \GreenRobot{(D)}{r2}
        \RedRobot{($(D)+(0,.3)$)}{g2}
        \GreenRobot{(E)}{r3}
        \BlueRobot{($(E)+(-.3,0)$)}{b2}
        \draw[<->,bend right] (b1) to  (r2);
        \node at (0,-2) {$c$};
    \end{tikzpicture}
    \quaad
    \begin{tikzpicture}
        \Xgraph
        \node[below,yshift=-1mm] at (A) {$v_1$};
        \node[above left] at (B) {$v_2$};
        \node[below,yshift=-1mm] at (C) {$v_3$};
        \node[below left] at (D) {$v_4$};
        \node[below,yshift=-1mm] at (E) {$v_5$};
        \GreenRobot{(A)}{r1}
        \YellowRobot{($(A)+(0.3,0)$)}{y2}
        \RedRobot{(B)}{g1}
        \GreenRobot{(C)}{r2}
        \YellowRobot{($(C)+(0.3,0)$)}{y1}
        \BlueRobot{(D)}{b1}
        \RedRobot{($(D)+(0,.3)$)}{g2}
        \GreenRobot{(E)}{r3}
        \BlueRobot{($(E)+(-.3,0)$)}{b2}
        \draw[<->,bend right] (g1) to  (r2);
        \node at (0,-2) {$\phantom c$};
    \end{tikzpicture}
    \quaad
    \begin{tikzpicture}
        \Xgraph
        \node[below,yshift=-1mm] at (A) {$v_1$};
        \node[above left] at (B) {$v_2$};
        \node[below,yshift=-1mm] at (C) {$v_3$};
        \node[below left] at (D) {$v_4$};
        \node[below,yshift=-1mm] at (E) {$v_5$};
        \GreenRobot{(A)}{r1}
        \YellowRobot{($(A)+(0.3,0)$)}{y2}
        \GreenRobot{(B)}{r2}
        \RedRobot{(C)}{g1}
        \YellowRobot{($(C)+(0.3,0)$)}{y1}
        \BlueRobot{(D)}{b1}
        \RedRobot{($(D)+(0,.3)$)}{g2}
        \GreenRobot{(E)}{r3}
        \BlueRobot{($(E)+(-.3,0)$)}{b2}
        \draw[->,bend right] (r1) to (g2);
        \draw[->,bend right] (g2) to (b2);
        \draw[->,bend right] (b2) to (r1);
        \node at (0,-2) {$\tilde{c}$};
    \end{tikzpicture}
    \quaad
    \caption{A sequence of color swaps (left and middle); a smaller cycle in $\tilde c$ (right)}
    \label{fig:colorSwapSequence}
\end{figure}
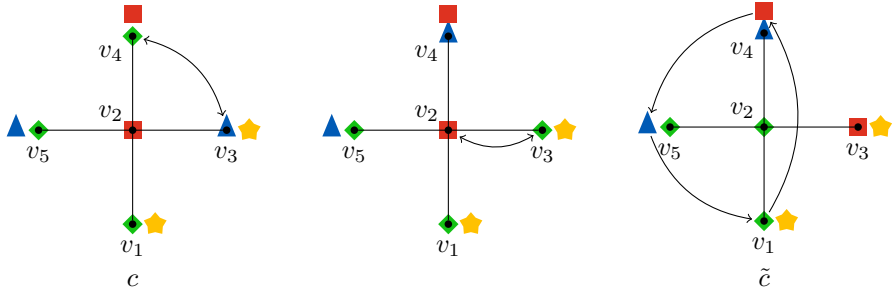

If $t=p+1$ (i.e. the only $i_1$-colored robot is on $v_{p+1}=v_1$), this sequence of color swaps achieves the permutation corresponding to (\ref{eqn:Cycle}). Otherwise, denote the resulting cell by $\tilde c$ and consider the cycle 
\begin{equation}\label{eqn:Cycle2}
(i_1,v_1)\to(i_{t'},v_{t'})\to(i_{t'+1},v_{t'+1})\to\cdots\to(i_p,v_p)\to(i_{p+1},v_{p+1}),
\end{equation}
where $t'=t$ if $i_t\ne i_1$ (so that the $i_t$-colored robot was not removed from $v_t$), and $t'=t+1$ if $i_t=i_1.$ See the right side of \cref{fig:colorSwapSequence}; in this case, we have $t'=t=4$, since $i_t=1$ (red/square), but $i_1=2$ (green/diamond). 

Using the cycle in (\ref{eqn:Cycle2}) to permute the robots on $\tilde c$ results in the same configuration as using the cycle in (\ref{eqn:Cycle}) to permute the robots on $c,$ so that we've essentially reduced the cycle in (\ref{eqn:Cycle}) to the smaller cycle in (\ref{eqn:Cycle2}). We can repeat this process to reduce each cycle to a smaller cycle using color swaps, until we've completed the original cycle in (\ref{eqn:Cycle}). This gives a path from $c$ to another 0-cell with fewer extra/missing vertices. We can continue using cycles in this fashion until there are no more extra/missing vertices. All together, this gives a path from $c$ to $c'$ using a sequence of color swaps. 
\end{proof}

\begin{thm}\label[thm]{thm:SrGConnected}
    If $G$ is a connected graph and $\vec r$ is a non-trivial color vector with at least three colors, then $\Sr(G)$ is path-connected.
\end{thm}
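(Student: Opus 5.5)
Every configuration lies in some cell by \cref{prop:UnionOfCells}, and every cell is a product of intervals. So every configuration can be joined by a straight-line path inside its cell to a 0-cell on that cell's boundary, obtained by sliding each robot on an edge interior to one endpoint of that edge. A cell is a product of closed intervals and is closed in $\Sr(G)$, so this path stays in $\Sr(G)$. It therefore suffices to connect any two 0-cells $c$ and $c'$ by a path.

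By \cref{cor:SameType}, 0-cells of the same type are already connected. The remaining task is to change the type of $c$ until it matches that of $c'$. I would induct on $D(c,c')=\sum_{v}|d_v(c,c')|$. If $D>0$, pick a vertex $z$ with $d_z(c,c')>0$ and a vertex $x$ with $d_x(c,c')<0$; these exist because both cells have the same total number of robots. Since $|c(z)|>|c'(z)|\ge 1$, the vertex $z$ is available in $c$. Choose any color $k\in c(z)$ with $k\notin c(x)$, if one exists. Then \cref{lemma:leapfrog2} applied along a path $P$ from $z$ to $x$ (which exists since $G$ is connected) gives a path in $\Sr(G)$ to a 0-cell $\hat c$. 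In $\hat c$ the count at $z$ drops by one, the count at $x$ rises by one, and all other counts are unchanged. Hence $D(\hat c,c')=D(c,c')-2$, and induction finishes the argument.

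The main obstacle is the case where every color on $z$ already appears on $x$. To handle it, first use \cref{cor:SameType} (equivalently, color swaps via \cref{cor:SwapColors}) to move to a 0-cell of the same type as $c$ in which $c(z)\not\subseteq c(x)$.

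This requires a color that is absent from $x$. Since $|c(x)|<|c'(x)|$, the vertex $x$ does not carry all $r$ colors, so some color $k$ is missing from $x$. Because $\ell_k\ge1$, some vertex $y$ carries $k$. If $y=z$ we are done. Otherwise, take a color $i\in c(z)\setminus c(y)$; this is possible unless $c(z)\subseteq c(y)$.

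\begin{itemize}
\item If such an $i$ exists, swap the $i$ on $z$ with the $k$ on $y$ using \cref{cor:SwapColors}, which requires $k\notin c(z)$ (true in this case). This puts $k$ on $z$ while keeping all types fixed.
\item If instead $c(z)\subseteq c(y)$, simply replace $z$ by $y$ in the leapfrog, provided $|c(y)|\ge2$. This holds because $y$ contains $c(z)$, which has size at least $2$. The count then drops at $y$ rather than at $z$, so this branch would need re-examination against the requirement $d_z>0$.
\end{itemize}

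To avoid that issue, the cleaner plan is to always arrange $k\in c(z)$ via the swap. If $c(z)\subseteq c(y)$, first swap some color of $z$ that is absent from $x$ (none exists in this case) or otherwise route through a third vertex. I expect verifying the hypotheses of \cref{cor:SwapColors} (namely $i\notin c(y)$ and $k\notin c(z)$) to be the fiddly part. If that fails, I would fall back on \cref{cor:SameType} directly: construct any 0-cell $\tilde c$ of the same type as $c$ with some color on $z$ not on $x$, which is possible since $x$ misses a color and colors can be reassigned among vertices while respecting $|c(v)|$ and $\ell_i$. Then connect $c$ to $\tilde c$ and leapfrog.
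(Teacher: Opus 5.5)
Your reduction to 0-cells, your use of \cref{cor:SameType}, and the leapfrog step when some $k\in c(z)\setminus c(x)$ exists all agree with the paper. The gap is the case $c(z)\subseteq c(x)$, which you never resolve. The branch $c(z)\subseteq c(y)$ is left open. Your fallback claim, that some 0-cell of the same type as $c$ has a color on $z$ that is not on $x$, is false for an arbitrary admissible pair $(z,x)$.

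For a counterexample, take any connected $G$ on vertices $x,z,y,w,w'$ and $\vec r=(4,4,1)$. Let $c(x)=c(z)=\{1,2\}$, $c(y)=\{1,2,3\}$, $c(w)=\{2\}$, $c(w')=\{1\}$. Let $c'(x)=\{1,2,3\}$, $c'(y)=c'(w)=\{1,2\}$, $c'(z)=\{1\}$, $c'(w')=\{2\}$. Then $d_z(c,c')=1$ and $d_x(c,c')=-1$, so your rule permits the pair $(z,x)$. In every 0-cell of the type of $c$, the vertex $y$ must carry all three colors, which uses up color $3$. So $x$ and $z$ are both forced to carry exactly $\{1,2\}$. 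No swaps can produce a color on $z$ that is not on $x$. The only color missing from $x$ is $3$, and its only carrier $y$ has $c(y)\supseteq c(z)$, which is exactly your open branch. Here a different negative vertex, $w$, would work, but you give no rule for choosing it.

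The missing idea is to stop modifying only $c$. If $c(z)\subseteq c(x)$, then $|c(z)|\le|c(x)|$, so
$|c'(z)|<|c(z)|\le|c(x)|<|c'(x)|$.
Hence some color $k$ lies in $c'(x)\setminus c'(z)$, and $x$ is available in $c'$. Apply \cref{lemma:leapfrog2} in $c'$ to leapfrog from $x$ to add a $k$-colored robot to $z$; this also lowers $D$ by $2$. When $|c(z)|>|c(x)|$, a color in $c(z)\setminus c(x)$ exists automatically, and you leapfrog in $c$ as you proposed. Repeat until the types agree. Then concatenate the path from $c$, the path from \cref{cor:SameType}, and the reverse of the path built from $c'$. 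This is exactly the paper's proof, and it needs no color rearrangement at all.
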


\begin{proof}
    Let $A$ and $B$ be any two configurations in $\Sr(G)$. If $A$ has any robots on any edges of $G$, we can move each such robot onto one of the endpoints of the edge it occupies to obtain a path $\gamma_{A}$ from $A$ to a $0$-cell $c.$ Likewise, we can obtain a path $\gamma_B$ from $B$ to a 0-cell $c'$. See \cref{fig:pathconnected}. If $c$ and $c'$ are of the same type, \cref{cor:SameType} gives a path $\gamma$ from $c$ to $c'.$ Otherwise, there are vertices $x$ and $y$ such that $d_x(c,c')>0$ (i.e. $|c(x)|>|c'(x)|$) and $d_y(c,c')<0$ (i.e. $|c(y)|<|c'(y)|$). 
    
    If $|c(x)|>|c(y)|,$ there must be some color $k$ with $k\in c(x)$ but $k\notin c(y).$ In this case, we can use \cref{lemma:leapfrog2}, to leapfrog a robot from $x$ to add a $k$-colored robot to $y$ in $c.$ Denote the resulting cell by $c_1$ and denote the path from $c$ to $c_1$ by $\gamma_1$. For book-keeping purposes, let $c'_1=c'$ and denote the constant path from $c'$ to $c_1'$ by $\gamma_1'.$ See \cref{fig:excess1}.

    \newsavebox{\cellc}
    \savebox{\cellc}{
    \begin{tikzpicture}
            \fg{\vertex{(0,0)}{A}
            \vertex{(1.5,0)}{B}
            \node[below,yshift =-1mm] at (A) {$x$};
            \node[below,yshift =-1mm] at (B) {$y$};
            \draw[dashed] (A) to (B);
            }
            \RedRobot{(A)}{}
            \RedRobot{(B)}{}
            \GreenRobot{($(A)+(0,0.3)$)}{g1}
            \GreenRobot{($(B)+(0,0.3)$)}{g2}
            \BlueRobot{($(A)+(0,0.6)$)}{b}
            \draw[bend left, ->] (b) to  ($(g2)+(0,0.3)$);
            \node at (0.75,-.65) {$c$};
        \end{tikzpicture}
    }
    \newsavebox{\cellcone}
    \savebox{\cellcone}{
    \begin{tikzpicture}
            \fg{\vertex{(0,0)}{A}
            \vertex{(1.5,0)}{B}
            \node[below,yshift =-1mm] at (A) {$x$};
            \node[below,yshift =-1mm] at (B) {$y$};
            \draw[dashed] (A) to (B);
            }
            \RedRobot{(A)}{}
            \RedRobot{(B)}{}
            \GreenRobot{($(A)+(0,0.3)$)}{}
            \GreenRobot{($(B)+(0,0.3)$)}{}
            \BlueRobot{($(B)+(0,0.6)$)}{}
            \node at (0.75,-.65) {$c_1$};
        \end{tikzpicture}
    }
    \newsavebox{\cellcp}
    \savebox{\cellcp}{
    \begin{tikzpicture}
            \fg{\vertex{(0,0)}{A}
            \vertex{(1.5,0)}{B}
            \node[below,yshift =-1mm] at (A) {$x$};
            \node[below,yshift =-1mm] at (B) {$y$};
            \draw[dashed] (A) to (B);
            }
            \YellowRobot{(A)}{y}
            \RedRobot{(B)}{r}
            \GreenRobot{($(B)+(0,0.3)$)}{}
            \BlueRobot{($(B)+(0,0.6)$)}{}
            \node at (0.75,-.65) {$c'$};
        \end{tikzpicture}
    }
    \newsavebox{\cellcpone}
    \savebox{\cellcpone}{
    \begin{tikzpicture}
            \fg{\vertex{(0,0)}{A}
            \vertex{(1.5,0)}{B}
            \node[below,yshift =-1mm] at (A) {$x$};
            \node[below,yshift =-1mm] at (B) {$y$};
            \draw[dashed] (A) to (B);
            }
            \YellowRobot{(A)}{}
            \RedRobot{(B)}{}
            \GreenRobot{($(B)+(0,0.3)$)}{}
            \BlueRobot{($(B)+(0,0.6)$)}{}
            \node at (0.75,-.65) {$c'_1$};
        \end{tikzpicture}
    }

    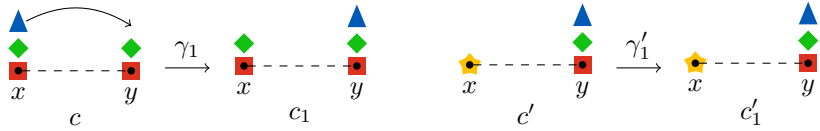
\begin{figure}[h!]
        \centering
        \begin{tikzpicture}
            \node (cellc) at (0,0) {\usebox{\cellc}};
            \node (cellcone) at (3,0) {\usebox{\cellcone}};
            \draw[->] (cellc) to node[above]{$\gamma_1$} (cellcone);
            \node (cellcp) at (6,0) {\usebox{\cellcp}};
            \node (cellcpone) at (9,0) {\usebox{\cellcpone}};
            \draw[->] (cellcp) to node[above]{$\gamma'_1$} (cellcpone);
        \end{tikzpicture}
        \caption{$d_x(c,c')=2;\ d_y(c,c')=-1;\ d_x(c_1,c'_1)=1;\ d_y(c_1,c'_1)=0$}
        \label{fig:excess1}
    \end{figure}

    If $|c(x)|\le |c(y)|,$ then since $|c'(x)|<|c(x)|$ and $|c(y)|<|c'(y)|,$ we must have $|c'(x)|<|c'(y)|,$ so there must be some color $k$ with $k\in c'(y)$ but $k\notin c'(x).$ In this case, we can leapfrog a robot from $y$ to add a $k$-colored robot to $x$ in $c'.$ Denote the resulting cell by $c_1',$ denote the path from $c'$ to $c'_1$ by $\gamma_1'$, let $c_1=c$ and denote the constant path from $c$ to $c_1$ by $\gamma_1.$ See \cref{fig:excess2}.

    \savebox{\cellc}{
    \begin{tikzpicture}
            \fg{\vertex{(0,0)}{A}
            \vertex{(1.5,0)}{B}
            \node[below,yshift =-1mm] at (A) {$x$};
            \node[below,yshift =-1mm] at (B) {$y$};
            \draw[dashed] (A) to (B);
            }
            \RedRobot{(A)}{}
            \RedRobot{(B)}{}
            \node at (0,1.0) {};
            \GreenRobot{($(A)+(0,0.3)$)}{}
            \GreenRobot{($(B)+(0,0.3)$)}{}
            \node at (0.75,-.75) {$c$};
        \end{tikzpicture}
    }
    \savebox{\cellcone}{
    \begin{tikzpicture}
            \fg{\vertex{(0,0)}{A}
            \vertex{(1.5,0)}{B}
            \node at (0,1.0) {};
            \node[below,yshift =-1mm] at (A) {$x$};
            \node[below,yshift =-1mm] at (B) {$y$};
            \draw[dashed] (A) to (B);
            }
            \RedRobot{(A)}{}
            \RedRobot{(B)}{}
            \GreenRobot{($(A)+(0,0.3)$)}{}
            \GreenRobot{($(B)+(0,0.3)$)}{}
            \node at (0.75,-.65) {$c_1$};
        \end{tikzpicture}
    }
    \savebox{\cellcp}{
    \begin{tikzpicture}
            \fg{\vertex{(0,0)}{A}
            \vertex{(1.5,0)}{B}
            \node at (0,1.0) {};
            \node[below,yshift =-1mm] at (A) {$x$};
            \node[below,yshift =-1mm] at (B) {$y$};
            \draw[dashed] (A) to (B);
            }
            \YellowRobot{(A)}{y}
            \YellowRobot{($(B)+(0,0.95)$)}{}
            \RedRobot{(B)}{}
            \GreenRobot{($(B)+(0,0.3)$)}{}
            \BlueRobot{($(B)+(0,0.6)$)}{b}
            \draw[bend right, ->] (b) to ($(y)+(0,0.3)$);
            \node at (0.75,-.65) {$c'$};
        \end{tikzpicture}
    }
    \savebox{\cellcpone}{
    \begin{tikzpicture}
            \fg{
            \node at (0,1.0) {};
            \vertex{(0,0)}{A}
            \vertex{(1.5,0)}{B}
            \node[below,yshift =-1mm] at (A) {$x$};
            \node[below,yshift =-1mm] at (B) {$y$};
            \draw[dashed] (A) to (B);
            }
            \YellowRobot{($(A)+(0,0.35)$)}{}
            \YellowRobot{($(B)+(0,0.65)$)}{}
            \BlueRobot{(A)}{}
            \GreenRobot{($(B)+(0,0.3)$)}{}
            \RedRobot{($(B)+(0,0)$)}{}
            \node at (0.75,-.65) {$c'_1$};
        \end{tikzpicture}
    }

    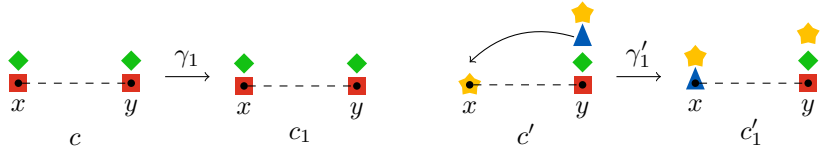
\begin{figure}[h!]
        \centering
        \begin{tikzpicture}
            \node (cellc) at (0,0) {\usebox{\cellc}};
            \node (cellcone) at (3,0) {\usebox{\cellcone}};
            \draw[->] (cellc) to node[above]{$\gamma_1$} (cellcone);
            \node (cellcp) at (6,0) {\usebox{\cellcp}};
            \node (cellcpone) at (9,0) {\usebox{\cellcpone}};
            \draw[->] (cellcp) to node[above]{$\gamma'_1$} (cellcpone);
        \end{tikzpicture}
        \caption{$d_x(c,c')=1;\ d_y(c,c')=-2;\ d_x(c_1,c'_1)=0;\ d_y(c_1,c'_1)=-1$}
        \label{fig:excess2}
    \end{figure}
    In either case, we have $0\le d_x(c_1,c_1')<d_x(c,c')$, $0\ge d_y(c_1,c_1')>d_y(c,c')$, and $d_v(c_1,c_1')=d_v(c,c')$ for all vertices $v\notin\{x,y\}$. We can continue this process to get sequences of 0-cells $c=c_0,c_1,\dots,c_p$ and $c'=c'_0,c'_1,\dots,c'_p$ and paths $\gamma_i$ from $c_{i-1}$ to $c_i$ and $\gamma'_i$ from $c'_{i-1}$ to $c'_i$, where $d_v(c_p,c'_p)=0$ for all vertices $v$. In particular, $c_p$ and $c_p'$ are of the same type, so by \cref{cor:SameType} there is a path $\tilde\gamma$ from $c_p$ to $c_p'.$ The sequence of paths $\gamma_1,\gamma_2,\dots,\gamma_p$, followed by $\tilde \gamma,$ followed by the sequence of paths $\gamma'_1,\dots,\gamma'_p$ in reverse gives a path $\gamma$ from $c$ to $c'.$ See \cref{fig:pathconnected}.

    \savebox{\cellc}{
    \begin{tikzpicture}
            \Xgraph
            \node[below,yshift=-1mm] at (A) {$x$};
            \node[below,yshift=-1mm] at (C) {$y$};
            \RedRobot{(A)}{}
            \RedRobot{(B)}{r}
            \RedRobot{(C)}{}
            \GreenRobot{($(A)+(0.3,0)$)}{}
            \GreenRobot{($(C)+(0,0.3)$)}{g}
            \BlueRobot{($(A)+(0.6,0)$)}{b1}
            \BlueRobot{($(B)+(0.25,0.25)$)}{b2}
            \YellowRobot{(D)}{}
            \YellowRobot{(E)}{}
            \draw[bend right, ->] (b1) to (b2);
            \draw[bend left, ->] (b2) to (g);
            \node at (0,-2) {$c=c_0$};
        \end{tikzpicture}
    }
    \savebox{\cellcone}{
    \begin{tikzpicture}
            \Xgraph
            \node[below,yshift=-1mm] at (A) {$x$};
            \node[below,yshift=-1mm] at (C) {$y$};
            \RedRobot{(A)}{}
            \RedRobot{(B)}{}
            \RedRobot{(C)}{}
            \GreenRobot{($(A)+(0.3,0)$)}{}
            \GreenRobot{($(C)+(0,0.3)$)}{}
            \BlueRobot{($(C)+(0,0.6)$)}{}
            \BlueRobot{($(B)+(0.25,0.25)$)}{}
            \YellowRobot{(D)}{}
            \YellowRobot{(E)}{}
            \node at (0,-2) {$c_1$};
        \end{tikzpicture}
    }
    \newsavebox{\cellctwo}
    \savebox{\cellctwo}{
    \begin{tikzpicture}
            \Xgraph
            \node[below,yshift=-1mm] at (A) {$x$};
            \node[below,yshift=-1mm] at (C) {$y$};
            \RedRobot{(A)}{}
            \RedRobot{(B)}{}
            \RedRobot{(C)}{}
            \GreenRobot{($(A)+(0.3,0)$)}{}
            \GreenRobot{($(C)+(0,0.3)$)}{}
            \BlueRobot{($(C)+(0,0.6)$)}{}
            \BlueRobot{($(B)+(0.25,0.25)$)}{}
            \YellowRobot{(D)}{}
            \YellowRobot{(E)}{}
            \node at (0,-2) {$c_2$};
        \end{tikzpicture}
    }
    \savebox{\cellcp}{
    \begin{tikzpicture}
            \Xgraph
            \node[below,yshift=-1mm] at (A) {$x$};
            \node[below,yshift=-1mm] at (C) {$y$};
            \RedRobot{(B)}{r1}
            \RedRobot{(C)}{r2}
            \RedRobot{(D)}{}
            \GreenRobot{($(C)+(0,0.3)$)}{}
            \GreenRobot{(E)}{}
            \BlueRobot{(A)}{b1}
            \BlueRobot{($(C)+(0,0.6)$)}{}
            \YellowRobot{($(B)+(0.25,0.25)$)}{}
            \YellowRobot{($(C)+(0,0.95)$)}{}
            \node at (0,-2) {$c'=c'_0$};
        \end{tikzpicture}
    }
    \savebox{\cellcpone}{
    \begin{tikzpicture}
            \Xgraph
            \node[below,yshift=-1mm] at (A) {$x$};
            \node[below,yshift=-1mm] at (C) {$y$};
            \RedRobot{(B)}{r1}
            \RedRobot{(C)}{r2}
            \RedRobot{(D)}{}
            \GreenRobot{($(C)+(0,0.3)$)}{}
            \GreenRobot{(E)}{}
            \BlueRobot{(A)}{b1}
            \BlueRobot{($(C)+(0,0.6)$)}{}
            \YellowRobot{($(B)+(0.25,0.25)$)}{}
            \YellowRobot{($(C)+(0,0.95)$)}{}
            \draw[->, bend left] (r2) to (r1);
            \draw[->, bend right] (r1) to (b1);
            \node at (0,-2) {$c_1'$};
        \end{tikzpicture}
    }
    \newsavebox{\cellcptwo}
    \savebox{\cellcptwo}{
    \begin{tikzpicture}
            \Xgraph
            \node[below,yshift=-1mm] at (A) {$x$};
            \node[below,yshift=-1mm] at (C) {$y$};
            \RedRobot{(B)}{}
            \RedRobot{(A)}{}
            \RedRobot{(D)}{}
            \GreenRobot{($(C)+(0,0)$)}{}
            \GreenRobot{(E)}{}
            \BlueRobot{($(A)+(0.3,0)$)}{}
            \BlueRobot{($(C)+(0,0.3)$)}{}
            \YellowRobot{($(B)+(0.25,0.25)$)}{}
            \YellowRobot{($(C)+(0,0.65)$)}{}
            \node at (0,-2) {$c'_2$};
        \end{tikzpicture}
    }
    \savebox{\ConfigA}{
    \begin{tikzpicture}
            \Xgraph
            \node[below,yshift=-1mm] at (A) {$x$};
            \node[below,yshift=-1mm] at (C) {$y$};
            \RedRobot{(A)}{}
            \RedRobot{(B)}{r}
            \RedRobot{(C)}{}
            \GreenRobot{($.65*(A)+.35*(B)$)}{}
            \GreenRobot{($(C)+(0,0.3)$)}{}
            \BlueRobot{($(A)+(0.3,0)$)}{}
            \BlueRobot{($0.5*(B)+0.5*(D)$)}{}
            \YellowRobot{(D)}{}
            \YellowRobot{(E)}{}
            \node at (0,-2) {$A$};
        \end{tikzpicture}
    }
    \savebox{\ConfigB}{
    \begin{tikzpicture}
            \Xgraph
            \node[below,yshift=-1mm] at (A) {$x$};
            \node[below,yshift=-1mm] at (C) {$y$};
            \RedRobot{(B)}{r1}
            \RedRobot{(C)}{r2}
            \RedRobot{(D)}{}
            \YellowRobot{($(C)+(0,0.3)$)}{}
            \GreenRobot{(E)}{}
            \BlueRobot{(A)}{b1}
            \BlueRobot{($0.65*(B)+0.35*(C)$)}{}
            \YellowRobot{($.5*(B)+.5*(D)$)}{}
            \GreenRobot{($.35*(B)+0.65*(C)$)}{}
            \node at (0,-2) {$B$};
        \end{tikzpicture}}
    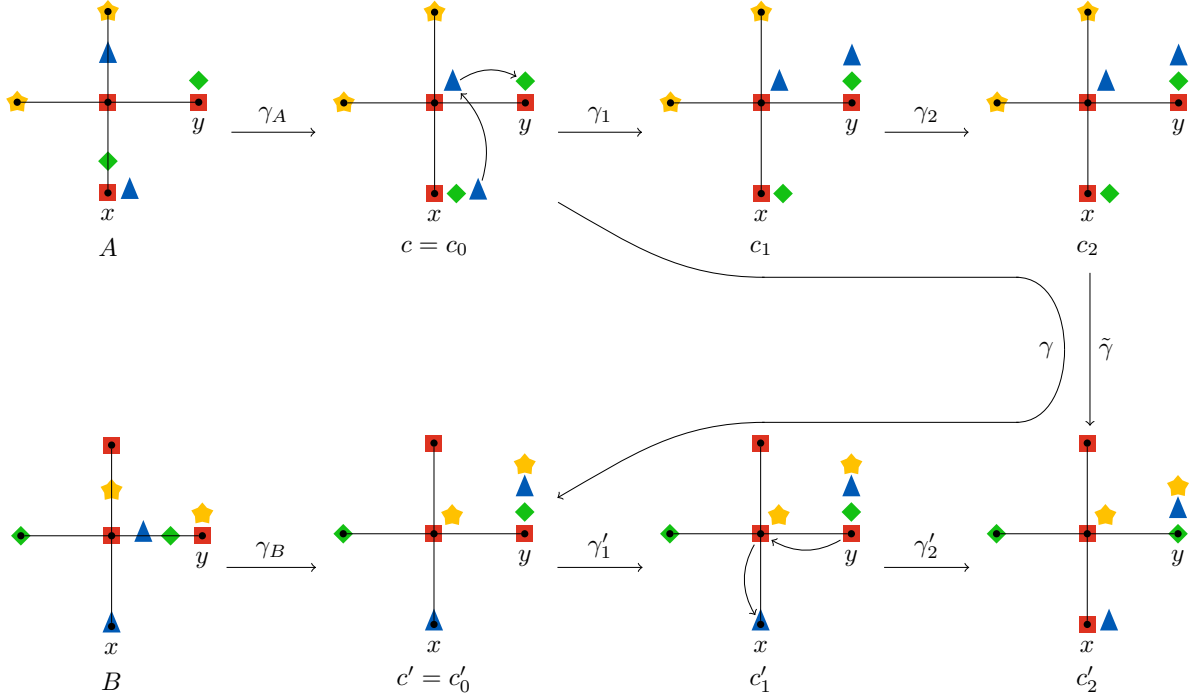
\begin{figure}[h!]
        \centering
        \scalebox{.965}{
        \begin{tikzpicture}
            \node (A) at (-4.5,0) {\usebox{\ConfigA}};
            \node (B) at (-4.5,-6) {\usebox{\ConfigB}};
            \node (c0) at (0,0) {\usebox{\cellc}};
            \node (c1) at (4.5,0) {\usebox{\cellcone}};
            \node (c2) at (9,0) {\usebox{\cellctwo}};
            \node (cp0) at (0,-6) {\usebox{\cellcp}};
            \node (cp1) at (4.5,-6) {\usebox{\cellcpone}};
            \node (cp2) at (9,-6) {\usebox{\cellcptwo}};
            \draw[->] (A) to node[above]{$\gamma_A$} (c0);
            \draw[->] (B) to node[above]{$\gamma_B$} (cp0);
            \draw[->] (c0) to node[above]{$\gamma_1$} (c1);
            \draw[->] (c1) to node[above]{$\gamma_2$} (c2);
            \draw[->] (cp0) to node[above]{$\gamma'_1$} (cp1);
            \draw[->] (cp1) to node[above]{$\gamma'_2$} (cp2);
            \draw[->] (c2) to node[right]{$\tilde \gamma$} (cp2);
            \node[inner sep = 0, outer sep = -1pt] (c1a) at (4.5,-2){};
            \node[inner sep = 0, outer sep = -1pt] (c2a) at (8,-2){};
            \node[inner sep = 0, outer sep = -1pt] (cp2a) at (8,-4){};
            \node[inner sep = 0, outer sep = -1pt] (cp1a) at (4.5,-4){};
            \draw [out = -30, in=180] (c0) to (c1a);
            \draw[out = 0, in=180] (c1a) to (c2a);
            \draw[out = 0, in=90] (c2a) to ($0.5*(c2a)+0.5*(cp2a)+(.65,0)$);
            \node at ($0.5*(c2a)+0.5*(cp2a)+(.4,0)$) {$\gamma$};
            \draw[out = -90, in=0] ($0.5*(c2a)+0.5*(cp2a)+(.65,0)$) to (cp2a);
            \draw[out = 0, in=180] (cp1a) to (cp2a);
            \draw [out = 30, in=180,<-] (cp0) to (cp1a);
        \end{tikzpicture}
        }
        \caption{A path from $A$ to $B$}
        \label{fig:pathconnected}
    \end{figure}

    In total, the path $\gamma_A$, followed by $\gamma$, followed by $\gamma_B$ in reverse gives a path from $A$ to $B$. Therefore, there is a path in $\Sr(G)$ between any two configurations $A$ and $B$, so $\Sr(G)$ is path-connected.
\end{proof}
\section{Cell Counts of $\Sr(G)$}\label{sec:CellCounts}
In this section, we determine the number of cells of $\Sr(G)$ for two classes of color vectors when $G$ is a simple graph (i.e. a graph with no loops or multiple edges). We first consider the case in which $\vec r=(2,1,\dots,1)\in \Z^n$. That is, we have a total of $n$ colors, with two 1-colored robots and one $k$-colored robot for $k\ge 2,$ so that we have a total of $n+1$ robots. 

\begin{thm}\label[thm]{thm:numCellsA}
    Let $G$ be a simple graph with $n\ge 2$ vertices and $m$ edges, and let $\vec{r}=(2,1,\dots,1)\in\Z^n$. In $\Sr(G)$ we have 
    \[
    \#(\text{i-cells)}=\begin{cases}
        \frac{n!(n^2+n-2)}{4}&\text{if }i=0\\
        \frac{m(n-1)!(n^2+n-4)}{2}&\text{if }i=1\\
        0&\text{if }i\ge 2\\
    \end{cases}.
    \]
\end{thm}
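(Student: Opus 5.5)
The plan is a direct count of the cells in \cref{defn:cell}, split by dimension. There are $n+1$ robots in total, so \cref{prop:MaxDimension} gives $\dim(c)\le (n+1)-n=1$ for every cell $c$. This already settles the case $i\ge 2$. For $i=0$ and $i=1$, I will use condition (b) of \cref{defn:cell}, which forces every vertex to appear in the cell. Since only $n+1$ vertex-or-edge slots are available, a $0$-cell has exactly one vertex carrying two robots and every other vertex carrying exactly one. A $1$-cell has exactly one edge slot, and every vertex carries exactly one robot.

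\textbf{0-cells.} First I choose the $2$-element set $S\subset V(G)$ for color $1$; condition (c) forces the two $1$-colored robots onto distinct vertices, giving $\binom n2$ choices. Next I count maps $f$ from the $n-1$ singleton colors $\{2,\dots,n\}$ to $V(G)$ whose image contains the $n-2$ vertices of $V(G)\setminus S$. There are two cases.
\begin{itemize}
\item The image is exactly $V(G)\setminus S$. Then one vertex is hit twice, giving $(n-2)\binom{n-1}{2}(n-3)!=\tfrac{(n-2)(n-1)!}{2}$ maps.
\item Exactly one color lands in $S$. Choosing that color and its vertex gives $2(n-1)$ options, and the rest is a bijection, giving $2(n-1)(n-2)!=2(n-1)!$ maps.
\end{itemize}
The total is $(n-1)!\,\tfrac{n+2}{2}$. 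Multiplying by $\binom n2$ gives $\tfrac{n!(n-1)(n+2)}{4}=\tfrac{n!(n^2+n-2)}{4}$.

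\textbf{1-cells.} I fix the edge $e=uv$, which gives the factor $m$, and split on which color carries $e$.
\begin{itemize}
\item \emph{Color $1$ carries $e$.} By (c), the other $1$-colored robot sits on some $w\notin\{u,v\}$, giving $n-2$ choices. The $n-1$ singleton colors must then fill the remaining $n-1$ vertices bijectively, in $(n-1)!$ ways.
\item \emph{Some color $k\ge2$ carries $e$.} There are $n-1$ choices of $k$. Color $1$ occupies two distinct vertices, in $\binom n2$ ways. The other $n-2$ singleton colors fill the remaining $n-2$ vertices bijectively, in $(n-2)!$ ways.
\end{itemize}
The two cases sum to $m\big[(n-2)(n-1)!+\tfrac{n(n-1)}{2}(n-1)!\big]=\tfrac{m(n-1)!(n^2+n-4)}{2}$.

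Simplicity of $G$ is used here: there are no loops, so an edge has two distinct endpoints, and there are no multiple edges, so edges are counted correctly by $m$. I would sanity-check the formulas on $P_2$ and $P_3$; for $P_3$ each edge should contribute $2+2\cdot3=8$ one-cells.

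The main obstacle is the $0$-cell count, specifically the covering condition for the singleton colors. It is easy to double count the case where the doubled vertex lies in $S$ against the case where it lies outside $S$. Splitting by whether a singleton color lands in $S$, as above, keeps the two cases disjoint.
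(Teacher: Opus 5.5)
Your proposal is correct and follows the paper's proof essentially step for step. It makes the same $\binom n2$ choice for the $1$-colored pair and then splits on whether the doubled vertex lies in that pair: your ``one singleton color lands in $S$'' case is the paper's $v\in\{x,y\}$ case. It counts $1$-cells in the same two cases, according to which color carries the edge, and uses \cref{prop:MaxDimension} for $i\ge 2$. One small correction: only the absence of loops is actually used, since parallel edges would each contribute the same per-edge count and the factor $m$ would still be right.
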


\begin{proof}
    In any 0-cell $c$ of $\Sr(G)$, there must be two distinct vertices, $x\ne y$, occupied by the two 1-colored robots; there are $\binom{n}{2}$ choices for these two vertices. Since there are a total of $n+1$ robots and $n$ vertices, there must be exactly one available vertex $v$, with $|c(v)|=2.$ So, for each of the choices of the vertices for the two 1-colored robots, we have the following two cases:
    
    First, suppose either $v=x$ or $v=y$. For each of these two possibilities, there must be some non-available vertex $u$ (either $y$ or $x$) with $1\in c(u),$ so the remaining $n-1$ robots can be permuted among the $n-1$ vertices in $V(G)\setminus\{u\}$. So, there are $2(n-1)!$ possibilities in this case.
    
    Second, if $v\ne x$ and $v\ne y,$ there must be two other colors $i\ne j$ with $i,j\in c(v).$ There are $\binom{n-1}{2}$ choices for these two colors, then $n-2$ choices for the vertex $v$, followed by $(n-3)!$ ways to permute the remaining robots among the remaining vertices in $V(G)\setminus\{x,y,v\}$. So, there are $\binom{n-1}{2}\cdot (n-2)\cdot(n-3)!=\binom{n-1}{2}\cdot (n-2)!$ possibilities in this case.

    Thus, in total, we have 
    \[
    \#(\text{0-cells)}=\binom{n}{2}\left(2(n-1)! +\binom{n-1}{2}(n-2)!\right)
    =\frac{n!(n^2+n-2)}{4}.
    \]

    For any $1$-cell $c$, there must be one edge $e$ with $|c(e)|=1,$ and $|c(v)|=1$ for each $v\in V(G).$ There are $m$ choices for the edge. If $c(e)=\{1\},$ there are $n-2$ choices for the location of the other 1-colored robot (since it can't fall on either endpoint of $e$), and $(n-1)!$ permutations of the remaining $n-1$ robots among the remaining $n-1$ vertices. If $c(e)=\{i\},$ with $i\ne 1,$ there are $n-1$ choices for $i$,  $\binom{n}{2}$ choices for the locations of the two $1$-colored robots, and $(n-2)!$ permutations of the remaining $n-2$ robots.
    
    Thus, in total, we have 
    \[
    \#(\text{1-cells})=m\left((n-2)(n-1)!+(n-1)\binom{n}{2}(n-2)!\right)=\frac{m(n-1)!(n^2+n-4)}{2}.
    \]

    The fact that there are no $i$-cells for $i\ge 2$ follows immediately from \cref{prop:MaxDimension}.
\end{proof}

Note that under the hypotheses of \cref{thm:numCellsA}, $\Sr(G)$ has only 0-cells and 1-cells, so that $\Sr(G)$ is itself a graph. For example, \cref{fig:SrP3} shows the configuration space $\Sr(P_3)$ for $\vec r = (2,1,1).$ 

\def\scalefactor{1}
    \savebox{\ConfigA}{
    \scalebox{\scalefactor}{\begin{tikzpicture}[scale=0.5]
        \vertex{(0,0)}{x}
        \vertex{(1,0)}{y}
        \vertex{(2,0)}{z}
        \RedRobot{(x)}{}
        \RedRobot{(z)}{}
        \GreenRobot{(y)}{}
        \BlueRobot{($(y)+(0,.5)$)}{}
        \smallvertex{(0,0)}{x}
        \smallvertex{(1,0)}{y}
        \smallvertex{(2,0)}{z}     
        \draw (0,0)--(2,0);
    \end{tikzpicture}
    }}
    \savebox{\ConfigB}{
    \scalebox{\scalefactor}{\begin{tikzpicture}[scale=0.5]
        \vertex{(0,0)}{x}
        \vertex{(1,0)}{y}
        \vertex{(2,0)}{z}
        \RedRobot{(x)}{}
        \RedRobot{(z)}{}
        \GreenRobot{(y)}{}
        \BlueRobot{($(x)+(0,.5)$)}{}
        \smallvertex{(0,0)}{x}
        \smallvertex{(1,0)}{y}
        \smallvertex{(2,0)}{z}
        \draw (0,0)--(2,0);
    \end{tikzpicture}
    }}
    \savebox{\ConfigC}{
    \scalebox{\scalefactor}{\begin{tikzpicture}[scale=0.5]
        \vertex{(0,0)}{x}
        \vertex{(1,0)}{y}
        \vertex{(2,0)}{z}
        \RedRobot{(x)}{}
        \RedRobot{(z)}{}
        \GreenRobot{(y)}{}
        \BlueRobot{($(z)+(0,.5)$)}{}
        \smallvertex{(0,0)}{x}
        \smallvertex{(1,0)}{y}
        \smallvertex{(2,0)}{z}
        \draw (0,0)--(2,0);
    \end{tikzpicture}
    }}
    \savebox{\ConfigD}{
    \scalebox{\scalefactor}{\begin{tikzpicture}[scale=0.5]
        \vertex{(0,0)}{x}
        \vertex{(1,0)}{y}
        \vertex{(2,0)}{z}
        \RedRobot{(x)}{}
        \RedRobot{(z)}{}
        \BlueRobot{(y)}{}
        \GreenRobot{($(x)+(0,.6)$)}{}
        \smallvertex{(0,0)}{x}
        \smallvertex{(1,0)}{y}
        \smallvertex{(2,0)}{z}
        \draw (0,0)--(2,0);
    \end{tikzpicture}
    }}
    \savebox{\ConfigE}{
    \scalebox{\scalefactor}{\begin{tikzpicture}[scale=0.5]
        \vertex{(0,0)}{x}
        \vertex{(1,0)}{y}
        \vertex{(2,0)}{z}
        \RedRobot{(x)}{}
        \RedRobot{(z)}{}
        \BlueRobot{(y)}{}
        \GreenRobot{($(z)+(0,.6)$)}{}
        \smallvertex{(0,0)}{x}
        \smallvertex{(1,0)}{y}
        \smallvertex{(2,0)}{z}
        \draw (0,0)--(2,0);
    \end{tikzpicture}
    }}
    \savebox{\ConfigF}{
    \scalebox{\scalefactor}{\begin{tikzpicture}[scale=0.5]
        \vertex{(0,0)}{x}
        \vertex{(1,0)}{y}
        \vertex{(2,0)}{z}
        \RedRobot{(y)}{}
        \RedRobot{(z)}{}
        \GreenRobot{($(y)+(0,0.65)$)}{}
        \BlueRobot{($(x)+(0,0)$)}{}
        \smallvertex{(0,0)}{x}
        \smallvertex{(1,0)}{y}
        \smallvertex{(2,0)}{z}
        \draw (0,0)--(2,0);
    \end{tikzpicture}
    }}
    \savebox{\ConfigG}{
    \scalebox{\scalefactor}{\begin{tikzpicture}[scale=0.5]
        \vertex{(0,0)}{x}
        \vertex{(1,0)}{y}
        \vertex{(2,0)}{z}
        \RedRobot{($(y)+(0,0)$)}{}
        \RedRobot{(z)}{}
        \GreenRobot{($(z)+(0,0.65)$)}{}
        \BlueRobot{($(x)+(0,0)$)}{}
        \smallvertex{(0,0)}{x}
        \smallvertex{(1,0)}{y}
        \smallvertex{(2,0)}{z}
        \draw (0,0)--(2,0);
    \end{tikzpicture}
    }}
    \savebox{\ConfigH}{
    \scalebox{\scalefactor}{\begin{tikzpicture}[scale=0.5]
        \vertex{(0,0)}{x}
        \vertex{(1,0)}{y}
        \vertex{(2,0)}{z}
        \RedRobot{($(y)+(0,0)$)}{}
        \RedRobot{(z)}{}
        \GreenRobot{($(x)+(0,0.75)$)}{}
        \BlueRobot{($(x)+(0,0)$)}{}
        \smallvertex{(0,0)}{x}
        \smallvertex{(1,0)}{y}
        \smallvertex{(2,0)}{z}
        \draw (0,0)--(2,0);
    \end{tikzpicture}
    }}
    \savebox{\ConfigI}{
    \scalebox{\scalefactor}{\begin{tikzpicture}[scale=0.5]
        \vertex{(0,0)}{x}
        \vertex{(1,0)}{y}
        \vertex{(2,0)}{z}
        \RedRobot{($(y)+(0,0)$)}{}
        \RedRobot{(z)}{}
        \GreenRobot{($(x)+(0,0)$)}{}
        \BlueRobot{($(y)+(0,0.5)$)}{}
        \smallvertex{(0,0)}{x}
        \smallvertex{(1,0)}{y}
        \smallvertex{(2,0)}{z}
        \draw (0,0)--(2,0);
    \end{tikzpicture}
    }}
    \savebox{\ConfigJ}{
    \scalebox{\scalefactor}{\begin{tikzpicture}[scale=0.5]
        \vertex{(0,0)}{x}
        \vertex{(1,0)}{y}
        \vertex{(2,0)}{z}
        \RedRobot{($(y)+(0,0)$)}{}
        \RedRobot{(z)}{}
        \GreenRobot{($(x)+(0,0)$)}{}
        \BlueRobot{($(z)+(0,0.5)$)}{}
        \smallvertex{(0,0)}{x}
        \smallvertex{(1,0)}{y}
        \smallvertex{(2,0)}{z}
        \draw (0,0)--(2,0);
    \end{tikzpicture}
    }}
    \savebox{\ConfigK}{
    \scalebox{\scalefactor}{\begin{tikzpicture}[scale=0.5]
        \vertex{(0,0)}{x}
        \vertex{(1,0)}{y}
        \vertex{(2,0)}{z}
        \RedRobot{($(y)+(0,0)$)}{}
        \RedRobot{(x)}{}
        \GreenRobot{($(y)+(0,0.6)$)}{}
        \BlueRobot{($(z)+(0,0)$)}{}
        \smallvertex{(0,0)}{x}
        \smallvertex{(1,0)}{y}
        \smallvertex{(2,0)}{z}
        \draw (0,0)--(2,0);
    \end{tikzpicture}
    }}
    \savebox{\ConfigL}{
    \scalebox{\scalefactor}{\begin{tikzpicture}[scale=0.5]
        \vertex{(0,0)}{x}
        \vertex{(1,0)}{y}
        \vertex{(2,0)}{z}
        \RedRobot{($(y)+(0,0)$)}{}
        \RedRobot{(x)}{}
        \GreenRobot{($(x)+(0,0.6)$)}{}
        \BlueRobot{($(z)+(0,0)$)}{}
        \smallvertex{(0,0)}{x}
        \smallvertex{(1,0)}{y}
        \smallvertex{(2,0)}{z}
        \draw (0,0)--(2,0);
    \end{tikzpicture}
    }}
    \savebox{\ConfigM}{
    \scalebox{\scalefactor}{\begin{tikzpicture}[scale=0.5]
        \vertex{(0,0)}{x}
        \vertex{(1,0)}{y}
        \vertex{(2,0)}{z}
        \RedRobot{($(y)+(0,0)$)}{}
        \RedRobot{(x)}{}
        \GreenRobot{($(z)+(0,0.75)$)}{}
        \BlueRobot{($(z)+(0,0)$)}{}
        \smallvertex{(0,0)}{x}
        \smallvertex{(1,0)}{y}
        \smallvertex{(2,0)}{z}
        \draw (0,0)--(2,0);
    \end{tikzpicture}
    }}
    \savebox{\ConfigN}{
    \scalebox{\scalefactor}{\begin{tikzpicture}[scale=0.5]
        \vertex{(0,0)}{x}
        \vertex{(1,0)}{y}
        \vertex{(2,0)}{z}
        \RedRobot{($(y)+(0,0)$)}{}
        \RedRobot{(x)}{}
        \GreenRobot{($(z)+(0,0)$)}{}
        \BlueRobot{($(y)+(0,0.5)$)}{}
        \smallvertex{(0,0)}{x}
        \smallvertex{(1,0)}{y}
        \smallvertex{(2,0)}{z}
        \draw (0,0)--(2,0);
    \end{tikzpicture}
    }}
    \savebox{\ConfigO}{
    \scalebox{\scalefactor}{\begin{tikzpicture}[scale=.5]
        \vertex{(0,0)}{x}
        \vertex{(1,0)}{y}
        \vertex{(2,0)}{z}
        \RedRobot{($(y)+(0,0)$)}{}
        \RedRobot{(x)}{}
        \GreenRobot{($(z)+(0,0)$)}{}
        \BlueRobot{($(x)+(0,0.5)$)}{}
        \smallvertex{(0,0)}{}
        \smallvertex{(1,0)}{}
        \smallvertex{(2,0)}{}
        \draw (0,0)--(2,0);
    \end{tikzpicture}
    }}

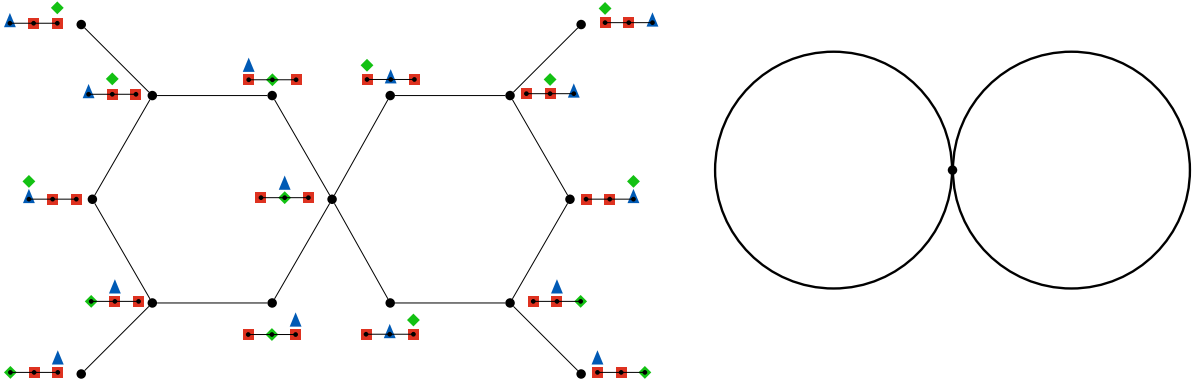
\begin{figure}[h!]
\centering
\scalebox{.63}{
\begin{tikzpicture}
        \def\vertsize{2}
        \node[regular polygon, regular polygon sides = 6, minimum size = 5cm, line width = 0.5mm] (A) at (-2.51,0) {};
        \node[regular polygon, regular polygon sides = 6, minimum size = 5cm, line width = 0.5mm] (B) at (2.51,0) {};
        \vertex[\vertsize]{(A.corner 6)}{a6};
        \node[yshift=2mm,xshift=-1cm] at (a6) {\usebox\ConfigA};
        \vertex[\vertsize]{(A.corner 1)}{a1};
        \node[yshift=.5cm] at (a1) {\usebox\ConfigB};
        \draw(a6)--(a1);
        \vertex[\vertsize]{(A.corner 5)}{a5};
        \node[yshift=-.5cm] at (a5) {\usebox\ConfigC};
        \draw(a6)--(a5);
        \vertex[\vertsize]{(B.corner 2)}{b2};
        \node[yshift=.5cm] at (b2) {\usebox\ConfigD};
        \draw(a6)--(b2);
        \vertex[\vertsize]{(B.corner 4)}{b4};
        \node[yshift=-.5cm] at (b4) {\usebox\ConfigE};
        \draw(a6)--(b4);
        \vertex[\vertsize]{(A.corner 2)}{a2};
        \node[yshift=2mm,xshift=-.85cm] at (a2) {\usebox\ConfigF};
        \draw(a1)--(a2);
        \vertex[\vertsize]{($(A.corner 2)+(-1.5,1.5)$)}{c1};
        \node[yshift=2mm,xshift=-1cm] at (c1) {\usebox\ConfigG}; 
        \draw(a2)--(c1);
        \vertex[\vertsize]{(A.corner 3)}{a3};
        \node[yshift=2mm,xshift=-.85cm] at (a3) {\usebox\ConfigH};
        \draw(a2)--(a3);
        \vertex[\vertsize]{(A.corner 4)}{a4};
        \node[yshift=2mm,xshift=-.8cm] at (a4) {\usebox\ConfigI};
        \draw(a3)--(a4);
        \vertex[\vertsize]{($(A.corner 4)+(-1.5,-1.5)$)}{c2};
        \node[yshift=2mm,xshift=-1cm] at (c2) {\usebox\ConfigJ}; 
        \draw(a4)--(c2);
        \draw (a4) -- (a5);
        \vertex[\vertsize]{(B.corner 1)}{b1};
        \node[yshift=2mm,xshift=.85cm] at (b1) {\usebox\ConfigK};
        \draw (b2)--(b1);
        \vertex[\vertsize]{($(B.corner 1)+(1.5,1.5)$)}{c3};
        \node[yshift=2mm,xshift=1cm] at (c3) {\usebox\ConfigL}; 
        \draw(b1)--(c3);
        \vertex[\vertsize]{(B.corner 6)}{b6}
        \node[yshift=2mm,xshift=.85cm] at (b6) {\usebox\ConfigM}; 
        \draw(b1)--(b6);
        \vertex[\vertsize]{(B.corner 5)}{b5}
        \node[yshift=2mm,xshift=1cm] at (b5) {\usebox\ConfigN}; 
        \draw(b5)--(b6);
        \draw (b4)--(b5);
        \vertex[\vertsize]{($(B.corner 5)+(1.5,-1.5)$)}{c4}
        \node[yshift=2mm,xshift=.85cm] at (c4) {\usebox\ConfigO};
        \draw (b5)--(c4);
    \end{tikzpicture}
    \quaad\raisebox{2cm}{
    \begin{tikzpicture}
        \node[circle, draw, minimum size = 5cm, line width = 0.5mm] (A) at (-2.51,0) {};
        \node[circle, draw, minimum size = 5cm, line width = 0.5mm] (B) at (2.51,0) {};
        \vertex[2]{(0,0)}{};
    \end{tikzpicture}}
    }
    \caption{$\Sr(P_3),$ where $\vec r = (2,1,1)$ (left); A wedge of two circles (right)}
    \label{fig:SrP3}
    \end{figure}

    The main feature of $\Sr(G)$ in this case is that it has two cycles (the two hexagons), so that $\Sr(G)$ is in some sense similar to the two circles joined at a point on the right side of \cref{fig:SrP3}; we refer to this as a \textit{wedge} of two circles. Topologically speaking, we say $\Sr(G)$ is \textit{homotopy equivalent} to a wedge of two circles. More information regarding homotopy equivalence can be found in standard topology textbooks, such as \cite{Munkres_2000}. One can show that any connected graph with $N$ vertices and $M$ edges is homotopy equivalent to a wedge of $M-N+1$ circles. For $\Sr(P_3)$ with $\vec r=(2,1,1),$ we have $N=15$ and $M=16$ (i.e. there are 15 0-cells and 16 1-cells of $\Sr(G)$), so that $\Sr(G)$ is homotopy equivalent to a wedge of $16-15+1=2$ circles.

    In general, under the hypotheses of \cref{thm:numCellsA}, we can classify $\Sr(G)$ up to homotopy.
    \begin{cor}\label[cor]{cor:HomotopyType}
        Let $G$ be a connected simple graph with $n\ge 2$ vertices and $m$ edges, and let $\vec{r}=(2,1,\dots,1)\in\Z^n$. The space $\Sr(G)$ is homotopy equivalent to a wedge of $L$ circles, where 
        \[
        L=\frac{m(n-1)!(n^2+n-4)}{2}-\frac{n!(n^2+n-2)}{4}+1.
        \]
    \end{cor}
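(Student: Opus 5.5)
The plan is to combine three facts already available: $\Sr(G)$ is a graph in this case, it is connected, and a connected graph with $N$ vertices and $M$ edges is homotopy equivalent to a wedge of $M-N+1$ circles.

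First, by \cref{thm:numCellsA}, $\Sr(G)$ has no $i$-cells for $i\ge 2$, so it is a one-dimensional cell complex. Each $1$-cell is a closed segment whose two endpoints are $0$-cells, obtained by moving the unique robot on the edge $e$ to either endpoint. So $\Sr(G)$ is a graph (possibly with multiple edges). Its vertex set $N$ is the number of $0$-cells and its edge set $M$ is the number of $1$-cells, both given by \cref{thm:numCellsA}.

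Second, I would show $\Sr(G)$ is connected by invoking \cref{thm:SrGConnected}. Its hypotheses need checking.
\begin{itemize}
\item $G$ is connected by assumption.
\item The color vector $(2,1,\dots,1)\in\Z^n$ has total $n+1>n$ robots.
\item Each $\ell_i\le 2<n$ once $n\ge 3$, so the vector is non-trivial.
\item It has $n\ge 3$ colors.
\end{itemize}
So \cref{thm:SrGConnected} applies for $n\ge 3$.

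The main obstacle is the edge case $n=2$, where $\ell_1=2=n$ and there are only two colors, so the theorem does not apply. Here I would argue directly. With $G$ a single edge $\{x,y\}$, the two 1-colored robots sit on $x$ and $y$. The single 2-colored robot is then unconstrained and lies anywhere on the closed edge. So $\Sr(G)$ is a single segment, which is connected and contractible. The formula agrees with this: with $n=2$ and $m=1$,
\[
L = \frac{1\cdot 1!\cdot 2}{2} - \frac{2!\cdot 4}{4} + 1 = 1 - 2 + 1 = 0,
\]
and a wedge of $0$ circles is a point. (If the graph is allowed a multi-edge this case changes, but $G$ is simple.)

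Finally, a connected graph is homotopy equivalent to a wedge of $M-N+1$ circles. To see this, collapse a spanning tree, which is contractible, so the quotient map is a homotopy equivalence. The remaining $M-(N-1)$ edges become loops at a single point. Substituting the counts from \cref{thm:numCellsA} for $M$ and $N$ gives exactly $L$.
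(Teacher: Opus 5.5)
Your proposal is correct and follows essentially the same route as the paper. Connectivity comes from \cref{thm:SrGConnected} for $n\ge 3$; for $n=2$ you argue directly that $\Sr(G)$ is a single segment, i.e.\ homeomorphic to $G$. You then apply the $M-N+1$ formula for connected graphs to the counts in \cref{thm:numCellsA}, and your explicit check that $L=0$ when $n=2$ and your spanning-tree justification are small additions the paper leaves implicit.
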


    \begin{proof}
        For $n=2,$ the two 1-colored robots must occupy the two vertices of $G$, so that the one 2-colored robot can move freely throughout $G$. Therefore $\Sr(G)$ is homeomorphic to $G$ and thus $\Sr(G)$ is connected. For $n\ge 3,$ $\Sr(G)$ is connected by \cref{thm:SrGConnected}. The result then follows immediately from the counts of the 0- and 1-cells given in \cref{thm:numCellsA}.
    \end{proof}

As an example, \cref{tab:CellTableA} shows the number of 0- and 1-cells of $\Sr(P_4)$, $\Sr(T_4)$, $\Sr(C_4)$, $\Sr(K_4)$ (for $\vec r = (2,1,1,1)$) and $\Sr(K_5)$ (for $\vec r = (2,1,1,1,1)$), and the number, $L$, of circles in the wedge of circles to which each space is homotopy equivalent. Here, $P_n,$ $T_n,$ $C_n,$ and $K_n$ denote the path, star, cycle, and complete graphs with $n$ vertices.
 
\newsavebox{\GraphAa}
\savebox{\GraphAa}{
\begin{tikzpicture}[scale=0.35]
    \vertex{(0,0)}{a}
    \vertex{(1,0)}{b}
    \vertex{(2,0)}{c}
    \vertex{(3,0)}{d}
    \draw (a)--(d);
\end{tikzpicture}
}

\newsavebox{\GraphA}
\savebox{\GraphA}{
\begin{tikzpicture}[scale=0.5]
    \Ygraph;
\end{tikzpicture}
}

\newsavebox{\GraphB}
\savebox{\GraphB}{
\begin{tikzpicture}[scale=0.7]
    \vertex{(0,0)}{a}
    \vertex{(0,1)}{b}
    \vertex{(1,1)}{c}
    \vertex{(1,0)}{d}
    \draw (a)--(b)--(c)--(d)--(a);
\end{tikzpicture}
}

\newsavebox{\GraphC}
\savebox{\GraphC}{
\begin{tikzpicture}[scale=0.7]
    \vertex{(0,0)}{a}
    \vertex{(0,1)}{b}
    \vertex{(1,1)}{c}
    \vertex{(1,0)}{d}
    \draw (a)--(b)--(c)--(d)--(a)--(c);
    \draw (b)--(d);
\end{tikzpicture}
}

\newsavebox{\GraphD}
\savebox{\GraphD}{
\begin{tikzpicture}[scale=0.5]
    \node[regular polygon, regular polygon sides=5, minimum size = 1cm] (A) at (0,0) {};
    \vertex{(A.corner 1)}{a};
    \vertex{(A.corner 2)}{b};
    \vertex{(A.corner 3)}{c};
    \vertex{(A.corner 4)}{d};
    \vertex{(A.corner 5)}{e};
    \draw (a)--(b)--(c)--(d)--(e)--(a)--(c)--(e)--(b)--(d)--(a);
\end{tikzpicture}
}

\begin{table}[h!]
    \centering
    $\begin{array}{c|c|c|c|c|c}
         &\raisebox{4mm}{\usebox\GraphAa}&   \usebox\GraphA & \usebox\GraphB & \usebox\GraphC & \usebox\GraphD\\
         \hline
         \vec r&(2,1,1,1)&(2,1,1,1) & (2,1,1,1) & (2,1,1,1) & (2,1,1,1,1)\\
         \hline
         \text{0-cells}&108&108 & 108 & 108& 840\\
         \hline
         \text{1-cells} &144& 144 & 192 & 288 & 3120\\
         \hline{L}&37 & 37 & 85 & 181 & 2281
         \text{}
    \end{array}$
    \caption{Cell counts for $\Sr(P_4)$, $\Sr(T_4)$, $\Sr(C_4)$, $\Sr(K_4)$ and $\Sr(K_5)$}
    \label{tab:CellTableA}
\end{table}

For the second class of color vectors, consider the case in which $\vec r=(n-1,n-1,\dots,n-1)\in \Z^r$ for any $r\ge 2.$ That is, we have $r$ colors and $n-1$ robots of each color. In \cref{thm:numCellsB}, we will determine the number of $i$-cells of $\Sr(G)$ for this class of color vectors. We first make the following observation. 

Consider a cell $c=(c_1,c_2,\dots,c_r)$ of $\Sr(G),$ so that each $c_k$ is a collection of $n-1$ vertices and/or edges. If $c_k$ consists exclusively of vertices, then $c_k$ is an $(n-1)$-element subset of $V(G)$, so $c_k=V(G)\backslash\{v\}$ for some vertex $v.$ In this case, we say $k$ is a \textit{vertex-only color}, and $v$ is the \textit{$k$-missed vertex}. If there is some edge $e=(a,b)\in c_k$,  then since $|c_k|=n-1,$ condition (c) of \cref{defn:cell} forces $c_k=\{e\}\cup(V(G)\backslash\{a,b\})$. That is, $c_k$ consists of $e$ together with all of the vertices of $G$ other than the endpoints of $e.$ In this case, we say $k$ is an \textit{edge color,} and $e$ is \textit{the $k$-colored edge}.

Therefore, for each $i$-cell $c$ of $\Sr(G),$ we can construct an $r$-tuple 
\[
\tilde c=(x_1,x_2,\dots,x_r),
\]
where $x_k$ is the $k$-missed vertex if $k$ is a vertex-only color, and $x_k$ is the $k$-colored edge if $k$ is an edge color. Since there are $i$ edges in $c$, there are also $i$ edges in $\tilde c.$ As such, we refer to $\tilde c$ as an \textit{$i$-edge-$r$-tuple}.

For example, the left side of \cref{fig:2edge4tuple} shows a cell $c=(\{x,y,e\},\{u,x,g\},\{u,x,y\},\{w,x,y\})$ of $\Sr(Y')$, where $\vec r = (3,3,3,3).$ Colors 1 and 2 (red/square and green/diamond) are edge colors. The 1-colored edge is $e$ and the 2-colored edge is $g.$ Colors 3 and 4 (blue/triangle and yellow/star) are vertex-only colors. The 3-missed vertex is $w$ and the 4-missed vertex is $u.$ Therefore, $\tilde c=(e,g,w,u)$. Similarly, for the cell in the middle of \cref{fig:2edge4tuple}, we have $\tilde c = (e, h, w, w).$

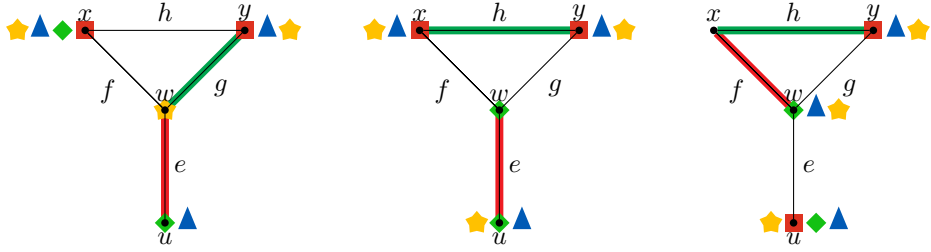
\begin{figure}[h!]
    \centering
    \begin{tikzpicture}[scale=1.5]
        \Ygraph;
        \fg{\draw (C)--(D);}
        \draw[line width=3, Red] (A) to (B);
        \draw[line width=3, Green] (B) to (D);
        \RedRobot{(D)}{}
        \RedRobot{(C)}{}
        \GreenRobot{(A)}{}
        \GreenRobot{($(C)+(-.2,0)$)}{}
        \BlueRobot{($(A)+(.2,0)$)}{}
        \BlueRobot{($(D)+(.2,0)$)}{}
        \BlueRobot{($(C)+(-.4,0)$)}{}
        \YellowRobot{(B)}{}
        \YellowRobot{($(D)+(.4,0)$)}{}
        \YellowRobot{($(C)+(-.6,0)$)}{}
        \node[below] at (A) {$u$};
        \node[above] at (B) {$w$};
        \node[above] at (C) {$x$};
        \node[above] at (D) {$y$};
        \node[above] at ($.5*(C)+.5*(D)$) {$h$};
        \node[right] at ($.5*(A)+.5*(B)$) {$e$};
        \node[below right] at ($.5*(D)+.5*(B)$) {$g$};
        \node[below left] at ($.5*(B)+.5*(C)$) {$f$};
    \end{tikzpicture}
    \quaad
    \begin{tikzpicture}[scale=1.5]
        \Ygraph
        \fg{\draw (C)--(D);}
        \draw[line width=3, Red] (A) to (B);
        \draw[line width=3, Green] (C) to (D);
        \RedRobot{(D)}{}
        \RedRobot{(C)}{}
        \GreenRobot{(A)}{}
        \GreenRobot{(B)}{}
        \BlueRobot{($(A)+(.2,0)$)}{}
        \BlueRobot{($(D)+(.2,0)$)}{}
        \BlueRobot{($(C)+(-.2,0)$)}{}
        \YellowRobot{($(A)+(-.2,0)$)}{}
        \YellowRobot{($(D)+(.4,0)$)}{}
        \YellowRobot{($(C)+(-.4,0)$)}{}
        \node[below] at (A) {$u$};
        \node[above] at (B) {$w$};
        \node[above] at (C) {$x$};
        \node[above] at (D) {$y$};
        \node[above] at ($.5*(C)+.5*(D)$) {$h$};
        \node[right] at ($.5*(A)+.5*(B)$) {$e$};
        \node[below right] at ($.5*(D)+.5*(B)$) {$g$};
        \node[below left] at ($.5*(B)+.5*(C)$) {$f$};
    \end{tikzpicture}
    \quaad
    \begin{tikzpicture}[scale=1.5]
        \Ygraph
        \fg{\draw (C)--(D);}
        \draw[line width=3, Red] (B) to (C);
        \draw[line width=3, Green] (C) to (D);
        \RedRobot{(D)}{}
        \RedRobot{(A)}{}
        \GreenRobot{($(A)+(.2,0)$)}{}
        \GreenRobot{(B)}{}
        \BlueRobot{($(A)+(.4,0)$)}{}
        \BlueRobot{($(D)+(.2,0)$)}{}
        \BlueRobot{($(B)+(.2,0)$)}{}
        \YellowRobot{($(A)+(-.2,0)$)}{}
        \YellowRobot{($(D)+(.4,0)$)}{}
        \YellowRobot{($(B)+(.4,0)$)}{}
        \node[below] at (A) {$u$};
        \node[above] at (B) {$w$};
        \node[above] at (C) {$x$};
        \node[above] at (D) {$y$};
        \node[above] at ($.5*(C)+.5*(D)$) {$h$};
        \node[right] at ($.5*(A)+.5*(B)$) {$e$};
        \node[below right] at ($.5*(D)+.5*(B)$) {$g$};
        \node[below left] at ($.5*(B)+.5*(C)$) {$f$};
    \end{tikzpicture}
    \caption{Valid cells of $\Sr(Y')$ (left and middle); Not a valid cell of $\Sr(Y')$ (right)}
    \label{fig:2edge4tuple}
\end{figure}

We can define a function $\Phi\from\{\text{$i$-cells of $\Sr(G)$}\}\to\{\text{$i$-edge-$r$-tuples}\}$ by $\Phi(c)=\tilde c.$ It is clear that $\Phi$ sends each $i$-cell $c$ to a unique $i$-edge-$r$-tuple, so that $\Phi$ is injective. Thus, if we define an $i$-edge-$r$-tuple to be \textit{valid} if it is in the image of $\Phi$ (and \textit{invalid} otherwise) we have a bijection 

\begin{equation}\label{eqn:bijection}
    \Phi\from\{i\text{-cells of }\Sr(G)\}\leftrightarrow \{\text{valid $i$-edge-$r$-tuples}\}.
\end{equation}

Any $i$-edge-$r$-tuple (valid or invalid) is determined by first choosing the $i$ components (i.e. colors) in the $r$-tuple which are edges; for each of these $i$ components, there are $m$ choices for the edge in that component. For the remaining $r-i$ components (which are vertices), there are $n$ choices for the vertex in that component. Thus, the total number of $i$-edge-$r$-tuples is 

\begin{equation}\label{eqn:total}
    \binom{r}{i} m^in^{r-i}.
\end{equation}

In the proof of \cref{thm:numCellsB}, we count the number of valid $i$-edge-$r$-tuples (i.e. number of $i$-cells of $\Sr(G)$) by subtracting the number of invalid $i$-edge-$r$-tuples from (\ref{eqn:total}).

\begin{ex}
As an example to motivate the proof, consider the invalid 2-edge-4-tuples for $\Sr(Y')$ (again with $\vec r = (3,3,3,3)$). Such a tuple $\tilde c$ must consist of 2 edges and 2 vertices such that: (a) each vertex is equal to some common vertex $v$, and (b), each edge must have $v$ as an endpoint. For example, the 2-edge-2-tuple $\tilde c = (f,h,x,x)$ would correspond to the ``cell" on the right of \cref{fig:2edge4tuple}, but this is not a valid cell since no robot is on the vertex $x.$ In this case, the common vertex of $\tilde c$ is $x$, and the two edges $f$ and $h$ have $x$ as an endpoint. Note both conditions (a) and (b) are necessary for $\tilde c$ to be invalid. For example, $\tilde c = (e,h,w,w)$ satisfies condition (a) but not (b), and as we saw above, $\tilde c$ is valid in this case.

So, to describe an invalid 2-edge-4-tuple $\tilde c$ for $\Sr(Y')$, we can first pick the common vertex $v$, then pick two of the four components of $\tilde c$ in which an edge appears, then for each of those $2$ components, pick one of the $\deg(v)$ edges which has $v$ as an endpoint. Each of the remaining $4-2$ components must equal $v.$ Thus, the total number of invalid 2-edge-2-tuples in this case is 
\[
\overbrace{\binom{4}{2}\deg(u)^2}^{v=u}+\overbrace{\binom{4}{2}\deg(w)^2}^{v=w}+\overbrace{\binom{4}{2}\deg(x)^2}^{v=x}+\overbrace{\binom{4}{2}\deg(y)^2}^{v=y}=6\cdot1+6\cdot9+6\cdot4+6\cdot4=108.
\]
Subtracting this from (\ref{eqn:total}), with $r=4,\ i=2, m=n=4$ gives the number of 2-cells of $\Sr(Y')$:
\[
\#(\text{2-cells})=\binom{4}{2}\cdot4^2\cdot4^2-108=1428.
\]
\end{ex}

\begin{thm}\label[thm]{thm:numCellsB}
    Let $G$ be a simple graph with $n$ vertices and $m$ edges, and let $\vec{r}=(n-1,n-1,\dots,n-1)\in\Z^r$ for some integer $r\ge 2$. In $\Sr(G)$ we have 
    \[
    \#(\text{i-cells)}=\begin{cases}\displaystyle
        \binom{r}{i}\left(m^in^{r-i}-\displaystyle\sum_{v\in V(G)}\deg(v)^i\right)&\text{if }0\le i<r\\
        m^r+m-\displaystyle\sum_{v\in V(G)}\deg(v)^r&\text{if }i=r\\
        0&\text{if }i> r\\
    \end{cases}.
    \]
\end{thm}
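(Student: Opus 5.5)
We count valid $i$-edge-$r$-tuples via the bijection (\ref{eqn:bijection}), subtracting the invalid ones from the total (\ref{eqn:total}). For $i>r$ there are no $i$-edge-$r$-tuples at all (and \cref{prop:MaxDimension} gives $\dim c\le r(n-1)-n<r(n-1)$, but more directly each color contributes at most one edge), so the count is $0$. The key step is to characterize exactly when a tuple $\tilde c=(x_1,\dots,x_r)$ is invalid.

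First, every $i$-edge-$r$-tuple determines a candidate $c$ as described before the statement: $c_k=V(G)\setminus\{x_k\}$ if $x_k$ is a vertex, and $c_k=\{e\}\cup(V(G)\setminus\{a,b\})$ if $x_k=e=(a,b)$. Each such $c_k$ has $n-1$ elements (using that $G$ is simple, so $a\ne b$), and within each $c_k$ the pairwise disjointness condition (c) of \cref{defn:cell} holds. So the only condition that can fail is (b), that every vertex is covered. A vertex $v$ is missed by color $k$ exactly when $x_k=v$ or $x_k$ is an edge with endpoint $v$. Hence $\tilde c$ is invalid iff there is a vertex $v$ such that every component of $\tilde c$ is either $v$ or an edge incident to $v$.

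Next we count invalid tuples by summing over the witness vertex $v$. We must check uniqueness of $v$ to avoid double counting. If $i<r$, some component is a vertex, which must equal $v$, so $v$ is unique. Choosing the $i$ edge positions gives $\binom{r}{i}$ ways, and each edge is incident to $v$ in $\deg(v)$ ways, so there are $\binom{r}{i}\deg(v)^i$ invalid tuples with witness $v$. Summing over $v$ and subtracting gives the first line. If $i=r$, every component is an edge, and the witnesses are the common endpoints of all $r$ edges. When all $r$ edges equal a single edge $e$ (there are $m$ such tuples), $e$ has two witnesses; otherwise at least two distinct edges occur, and since $G$ is simple they share at most one vertex, so the witness is unique. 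Inclusion–exclusion therefore gives $\#\text{invalid}=\sum_v\deg(v)^r-m$. The total number of tuples is $m^r$, so the count is $m^r+m-\sum_v\deg(v)^r$.

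The main obstacle is this double-counting issue in the case $i=r$, together with making sure invalidity is exactly the covering failure. In particular, we must confirm that condition (c) never fails for these constructed $c_k$, which holds because the edge's endpoints are removed.
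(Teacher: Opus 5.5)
Your proposal is correct and follows essentially the same route as the paper: count all $\binom{r}{i}m^in^{r-i}$ tuples via the bijection $\Phi$, and characterize the invalid tuples as those having a vertex $v$ such that every component equals $v$ or is an edge incident to $v$. You then sum $\binom{r}{i}\deg(v)^i$ over $v$, and for $i=r$ subtract the $m$ constant tuples $(e,\dots,e)$, which are the only ones with two such vertices. Your explicit check that conditions (a) and (c) of the cell definition always hold for the candidate $c$, so that invalidity is exactly a failure of covering, makes rigorous the converse direction, namely that every tuple of this form really is invalid, which the paper uses without comment.
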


\begin{proof}
As noted above, to count the number of $i$-cells of $\Sr(G),$ it suffices to count the number of valid $i$-edge-$r$-tuples by subtracting the number of invalid $i$-edge-$r$-tuples from (\ref{eqn:total}).

If $\tilde c$ is an invalid $i$-edge-$r$-tuple, then any vertex in $\tilde c$ must be a $k$-missed vertex for \textit{every} vertex-only color $k$. This means that each vertex in $\tilde c$ is equal to some fixed vertex $v.$ Furthermore, every edge in $\tilde c$ must have $v$ as an endpoint, since if $k$ is an edge-color and $e$ is the $k$-colored edge, and $v$ is not an endpoint of $e$, there would be  a $k$-colored robot on $v$ (and hence $\tilde c$ would be valid).

Therefore, every invalid $i$-edge-$r$-tuple is of the form $\tilde c=(x_1,x_2,\dots,x_r),$ where there is some fixed vertex $v$ where $x_k=v$ when $x_k$ is a vertex, and $x_k$ has $v$ as an endpoint if $x_k$ is an edge. Thus for each $v\in V(G),$ we get an invalid $i$-edge-$r$-tuple $\tilde c$ by choosing $i$ of the $r$ components for the edges, then for each of these components, choosing one of the $\deg(v)$ edges which have $v$ as an endpoint. The remaining components of $\tilde c$ must be equal to $v.$ So, for each fixed vertex $v,$ we get $\binom{r}{i}\deg(v)^i$ invalid $i$-edge-$r$-tuples in which $v$ appears.

Furthermore, for $i<r,$ any $i$-edge-$r$-tuple must contain at least one vertex, so that in this case, distinct vertices give distinct invalid $i$-edge-$r$-tuples, so, the total number of invalid $i$-edge-$r$-tuples is 
\begin{equation}\label{eqn:invalid1}
    \sum_{v\in V(G)}\binom{r}{i}\deg(v)^i.
\end{equation}
Subtracting (\ref{eqn:invalid1}) from (\ref{eqn:total}) gives the result for $i<r.$

For $i=r$, any invalid $r$-edge-$r$-tuple $\tilde c$ must be of the form $\tilde{c}=(x_1,\dots,x_r),$ where each $x_k$ is an edge, with a common endpoint $v.$ So for each vertex $v,$ we must count the number of $r$-tuples of edges where each edge has $v$ as an endpoint. As in the previous case, there are $\deg(v)$ choices for each of the $r$ edges, so there are a total of $\deg(v)^r$ invalid $r$-edge-$r$-tuples in which all edges have $v$ as an endpoint. Consider the sum 
\begin{equation}\label{eqn:double}
    \sum_{v\in V(G)}\deg(v)^r.
\end{equation}

Since an invalid $r$-edge-$r$-tuple consists exclusively of edges, for any edge $e$ with endpoints $v_1$ and $v_2$, the invalid $r$-edge-$r$-tuple $\tilde c = (e,e,\dots,e)$ is double-counted in (\ref{eqn:double}): once in the term corresponding to $v_1$ and once in the term corresponding to $v_2.$  If $\tilde c$ consists of at least two different edges, then the edges in $\tilde c$ only have one common endpoint, so $\tilde c$ is only counted once. So, the only invalid $r$-edge-$r$-tuples which are double-counted are of the form $(e,e,\dots,e).$ Since there are $m$ edges in $G$, there are $m$ $r$-edge-$r$-tuples which are double-counted.

Therefore, the total number of invalid $r$-edge-$r$-tuples is 
\begin{equation}\label{eqn:Invalidrr}
    \left(\sum_{v\in V(G)}\deg(v)^r\right)-m.
\end{equation}

Subtracting (\ref{eqn:Invalidrr}) from (\ref{eqn:total}) (with $i=r$) gives the result.

The fact that there are no $i$-cells for $i>r$ follows from the fact that for each color $k$, there is at most one $k$-colored edge in any given cell $c$, so there are at most $r$ edges in total in $c$.
\end{proof}

As an example, \cref{tab:CellCountsB} gives the cell counts for $\Sr(T_4)$ and $\Sr(K_5)$ for various $\vec r.$

\begin{table}[h!]
\begin{subtable}{.5\linewidth}
    \centering
    \raisebox{0cm}{\scalebox{.9}{$\begin{array}{c|c|c|c|c}
         &  \multicolumn{4}{c}{\usebox\GraphA}\\
         \hline
         \vec r&(3,3) & (3,3,3) & (3,3,3,3) & (3,3,3,3,3)\\
         \hline
         \text{0-cells}  &12 & 60 & 252& 1020\\
         \hline
         \text{1-cells} & 12 & 126 & 744 & 3180\\
         \hline
         \text{2-cells} & 0 & 72 & 792 & 5640\\
         \hline
         \text{3-cells} & 0 &   0 & 312 & 4020\\
         \hline
         \text{4-cells} & 0 &  0  &  0   & 1200\\
         \hline
         \text{5-cells} & 0 & 0   &   0  & 0
    \end{array}$}}
\end{subtable}
\hspace*{-5mm}
\begin{subtable}{.5\linewidth}
    \centering
    \scalebox{.9}{$\begin{array}{c|c|c|c|c}
         &  \multicolumn{4}{c}{\usebox\GraphD}\\
         \hline
         \vec r&(4,4) & (4,4,4) & (4,4,4,4) & (4,4,4,4,4)\\
         \hline
         \text{0-cells} & 20 & 120 & 620& 3120\\
         \hline
         \text{1-cells} & 60 & 690 & 4920 & 31150\\
         \hline
         \text{2-cells} & 30 & 1260 & 14520 & 124200\\
         \hline
         \text{3-cells} &  0  & 690  & 18720 & 246800\\
         \hline
         \text{4-cells} &  0  &   0   & 8730  & 243600\\
         \hline  
         \text{5-cells} &  0  &   0   &    0   & 94890
    \end{array}$}
\end{subtable}
\caption{Cell counts of $\Sr(T_4)$ (left) and $\Sr(K_5)$ (right)}
\label{tab:CellCountsB}
\end{table}

In this case, the cell counts alone do not determine the homotopy type of $\Sr(G)$ when there are $i$-cells for $i\ge 2,$ so the cell counts do not give a complete description of the structure of $\Sr(G).$ One tool for determining more information about the structure of $\Sr(G)$ is the \textit{homology groups} of $\Sr(G)$. The homology groups of $\Sr(T_n)$ will be studied in a forthcoming manuscript by the second author.

\bibliographystyle{plain}
\bibliography{papers}

\end{document}